\newtheorem{theorem}{Theorem}
\newaliascnt{lemma}{theorem}
\newtheorem{lemma}[lemma]{Lemma}
\newaliascnt{corollary}{theorem}
\newtheorem{corollary}[corollary]{Corollary}
\newaliascnt{proposition}{theorem}
\newtheorem{proposition}[proposition]{Proposition}
\newaliascnt{conjecture}{theorem}
\newaliascnt{question}{theorem}
\theoremstyle{definition}
\newaliascnt{definition}{theorem}
\newtheorem{definition}[definition]{Definition}
\newaliascnt{remark}{theorem}
\newtheorem{remark}[remark]{Remark}
\newaliascnt{example}{theorem}
\newaliascnt{notation}{theorem}
\newtheorem{notation}[notation]{Notation}
\newif\ifhascomments \hascommentstrue
  \newcommand{\matt}[1]{{\color{red}[[\ensuremath{\spadesuit\spadesuit\spadesuit} #1]]}}
  \newcommand{\jeremy}[1]{{\color{red}[[\ensuremath{\clubsuit\clubsuit\clubsuit} #1]]}}
  \newcommand{\matt}[1]{}
  \newcommand{\jeremy}[1]{}
\renewcommand{\setminus}{\smallsetminus}
\newcommand{\Z}{\mathbb{Z}}
\newcommand{\Q}{\mathbb{Q}}
\newcommand{\R}{\mathbb{R}}
\newcommand{\cX}{\mathcal{X}}
\newcommand{\cY}{\mathcal{Y}}
\newcommand{\cZ}{\mathcal{Z}}
\newcommand{\cW}{\mathcal{W}}
\newcommand{\cC}{\mathcal{C}}
\newcommand{\cD}{\mathcal{D}}
\newcommand{\cU}{\mathcal{U}}
\newcommand{\cV}{\mathcal{V}}
\newcommand{\sL}{\mathscr{L}}
\newcommand{\sM}{\mathscr{M}}
\newcommand{\sJ}{\mathscr{J}}
\newcommand{\sW}{\mathscr{W}}
\newcommand{\bL}{\mathbb{L}}
\newcommand{\bG}{\mathbb{G}}
\newcommand{\bA}{\mathbb{A}}
\newcommand{\diff}{\mathrm{d}}
\newcommand{\red}{\mathrm{red}}
\DeclareMathOperator{\het}{ht}
\DeclareMathOperator{\coh}{coh}
\DeclareMathOperator{\e}{e}
\DeclareMathOperator{\Spec}{Spec}
\DeclareMathOperator{\ord}{ord}
\DeclareMathOperator{\EP}{EP}
\DeclareMathOperator{\val}{val}
\DeclareMathOperator{\can}{can}
\newcommand{\cE}{\mathcal{E}}
\DeclareMathOperator{\ordjac}{ordjac}
\newcommand{\tor}{\mathrm{tor}}
\DeclareMathOperator{\coker}{coker}
\tikzset{cong/.style={draw=none,edge node={node [sloped, allow upside down, auto=false]{$\cong$}}},
         Isom/.style={above,every to/.append style={edge node={node [sloped, allow upside down, auto=false]{$\sim$}}}}}
\title{Motivic integration for singular Artin stacks}
\author{Matthew Satriano and Jeremy Usatine}
\thanks{MS was partially supported by a Discovery Grant from the National Science and Engineering Research Council of Canada as well as a Mathematics Faculty Research Chair from the University of Waterloo}
\address{Matthew Satriano, Department of Pure Mathematics, University of Waterloo}
\email{msatriano@uwaterloo.ca}
\address{Jeremy Usatine, Department of Mathematics, Florida State University}
\email{jusatine@fsu.edu}
\begin{document}

\begin{abstract}
Let $\cX \to Y$ be a birational modification of a variety by an Artin stack. In previous work, under the assumption that $\cX$ is smooth, we proved a change of variables formula relating motivic integrals over arcs of $Y$ to motivic integrals over arcs of $\cX$. In this paper, we extend that result to the case where $\cX$ is singular. We may therefore apply this generalized formula to the so-called warping stack $\sW(\cX)$ of $\cX$, which may be singular even when $\cX$ is smooth. We thus obtain a change of variables formula \emph{canonically} expressing any given motivic integral over arcs of $Y$ as a motivic integral over \emph{warped arcs} of $\cX$.
\end{abstract}

\maketitle

\numberwithin{theorem}{section}
\numberwithin{lemma}{section}
\numberwithin{corollary}{section}
\numberwithin{proposition}{section}
\numberwithin{conjecture}{section}
\numberwithin{question}{section}
\numberwithin{remark}{section}
\numberwithin{definition}{section}
\numberwithin{example}{section}
\numberwithin{notation}{section}

\setcounter{tocdepth}{1}

\tableofcontents

\section{Introduction}

Motivic integration was pioneered by Kontesevich \cite{Kontsevich} in his proof that birational smooth Calabi--Yau varieties have equal Hodge numbers. Since then, the field has seen broad ranging applications in birational geometry, mirror symmetry, and the study of singularities. Central to these applications is a \emph{motivic change of variables formula} relating the motivic measures of $Y$ and $X$ under a birational modification $X \to Y$, see e.g., \cite{DenefLoeser1999, Looijenga}. 

Motivated by mirror symmetry for singular Calabi-Yau varieties, Batyrev \cite{Batyrev1998} introduced the \emph{stringy Hodge numbers} for varieties with \emph{log-terminal singularities}. These are defined in terms of the combinatorial information of a resolution of singularities rather than the dimensions of cohomology groups. However when $Y$ admits a \emph{crepant} resolution $X\to Y$, the stringy Hodge numbers of $Y$ agree with their classical counterparts on $X$.


In \cite{Yasuda2004} (see also \cite{Yasuda2006, Yasuda2019}), Yasuda gave a beautiful description for the stringy Hodge numbers of $Y$ in the case where $Y$ has only quotient singularities. Such varieties need not admit a crepant resolution by a scheme, however Vistoli \cite{Vistoli89} proved that they always admit small (hence crepant) resolutions $\pi\colon\cX\to Y$ by Deligne--Mumford stacks. Yasuda extended the theory of motivic integration to the case of smooth Deligne--Mumford stacks and proved a motivic change of variables for $\pi$. As a result, he obtained that the stringy Hodge numbers of $Y$ coincide with the orbifold Hodge numbers of $\cX$ in the sense of \cite{ChenRuan}.

Since varieties with log-terminal singularities rarely admit crepant resolutions by Deligne--Mumford stacks, if we wish to say something similar for log-terminal singularities in general, we must consider Artin stacks. Indeed, it is easy to write down examples (see e.g., \cite[Example 7.1]{SatrianoUsatine3}) of log-terminal varieties admitting a small (hence crepant) resolution by an Artin stack, yet no crepant resolution by a Deligne--Mumford stack and even no non-commutative crepant resolution (NCCR), as defined by van den Bergh \cite{vdB2004}. Furthermore, we recently proved \cite{SatrianoUsatine3} that \emph{every} log-terminal variety admits a crepant resolution by a smooth Artin stack.

In order to apply a motivic change of variables formula to a resolution $\pi\colon\cX\to Y$ by a stack, one must understand how arcs of $Y$ lift to arcs of $\cX$. Here we must draw an important distinction between three cases of increasing generality:~when $\cX$ is a scheme, when $\cX$ is Deligne--Mumford, or when $\cX$ is Artin. In the scheme case, (outside a set of measure 0) every arc $\varphi\colon D\to Y$ lifts uniquely to $\cX$. When $\cX$ is Deligne--Mumford, $\varphi$ need not lift, however it does lift uniquely to a so-called \emph{twisted arc} as introduced by \cite{Yasuda2004}. When $\cX$ is Artin, the situation is far more complicated:~since Artin stacks are non-separated, $\varphi$ may admit many (even infinitely many) lifts to $\cX$. Thus, one is in need of a more general notion than twisted arcs, one that yields a \emph{unique} lift of $\varphi$.


In \cite{SatrianoUsatine4}, we construct this desired theory, obtaining new notions of arcs which we call \emph{warped arcs}. We prove that (outside a set of measure 0) every arc $\varphi$ of $Y$ admits a \emph{canonical} warped lift $\varphi^{\can}$, and that such lifts can naturally be interpreted as usual arcs of an auxiliary stack $\sW(\cX)$, see \cite[Theorem 1.8 and Theorem 1.15]{SatrianoUsatine4}. It is thus natural to ask for a motivic change of variables formula that can be applied to $\sW(\cX)$.

A technical hurdle arises here:~even when $\cX$ is smooth and finite type, $\sW(\cX)$ need not be. Thus, even if one is only interested in resolutions by smooth Artin stacks, in order to obtain canonical lifts, one must work with $\sW(\cX)$, and is therefore is need of a more general motivic change of variables formulas.

\vspace{1em}

The purpose of this paper is to accomplish this necessary generalization. Namely, we prove a motivic change of variables formula for stacks that are: (i) singular, (ii) locally of finite type, and (iii) non-equidimensional, see \autoref{theoremMotivicChangeOfVariablesMeasurable}. This generalizes our previous work \cite{SatrianoUsatine2} where we proved a motivic change of variables formula for smooth stacks. Ultimately, we prove our main theorem (\autoref{theoremMotivicChangeOfVariablesMeasurable}) by reducing to the smooth case; thus, the current work relies on \cite{SatrianoUsatine2} rather than supersede it.

To state our main theorem, we must first discuss new kinds of functions that we introduced in \cite{SatrianoUsatine2} known as \emph{height functions}. Let $\sL(\cX)$ denote the stack of arcs of $\cX$ and let $|\sL(\cX)|$ denote its associated topological space. 

Given an object of the derived category $E\in D^-_{\coh}(\cX)$, we let
\[
\het^{(i)}_E\colon|\sL(\cX)|\to\Z\cup\{\infty\}
\]
be the function assigning to an arc $\varphi\colon \Spec k'\llbracket t \rrbracket\to\cX$ the value
\[
\het^{(i)}_E(\varphi)=\dim_{k'} H^i(L\varphi^*E).
\]
This definition was inspired by recent work \cite{ESZB} of the first author, Ellenberg, and Zureick-Brown, where we generalized the notion of number-theoretic Weil heights to the case of stacks, and unified the Batyrev--Manin and Malle Conjectures.

When $\cX$ is smooth, our motivic change of variables formula $\pi\colon\cX\to Y$ is governed by the difference of heights
\[
\het^{(0)}_{L_{\cX/Y}}-\het^{(1)}_{L_{\cX/Y}},
\]
where $L_{\cX/Y}$ is the relative cotangent complex; see \cite[Theorem 1.3]{SatrianoUsatine2} and \cite[Theorem 2.3]{SatrianoUsatine3}. When $\cX$ is a scheme, $\het^{(1)}_{L_{\cX/Y}}$ vanishes and $\het^{(0)}_{L_{\cX/Y}}$ agrees with the order function of the relative Jacobian ideal of $\pi$. Hence, we recover Kontsevich's original motivic change of variables formula.

In this paper, we show that for singular $\cX$, the above height function needs to be modified by an interesting correction term. Namely, we introduce the following \emph{relative height function}:
\begin{align*}
	\het_{\cX/\cY}(\varphi) :=\ &\het^{(0)}_{L_{\cX/\cY}}(\varphi) - \het^{(1)}_{L_{\cX/\cY}}(\varphi)\\
	&- \dim_{k'}\coker\left(H^0(L\varphi^*L\pi^*L_\cY)_\tor \to H^0(L\varphi^* L_\cX)_\tor\right),
\end{align*}
where the subscript $\tor$ denotes the torsion submodule; see Definition \ref{def:relhet}. 

We may now state our main theorem after fixing the following notation. See \autoref{def:muxd} for the definition of the measure $\mu_{\cX,d}$.

\begin{notation}\label{not:mainnot}
Let $\cX$ be a locally finite type Artin stack over $k$ with affine geometric stabilizers and separated diagonal, let $Y$ be an irreducible finite type scheme over $k$, let $\pi: \cX \to Y$ be a morphism, let $U$ be a non-empty smooth open subscheme of $Y$ such that $\cU:=\pi^{-1}(U) \to U$ is an isomorphism. 
\end{notation}

\begin{theorem}\label{theoremMotivicChangeOfVariablesMeasurable}
Keep Notation \ref{not:mainnot}, and let $\cC \subset |\sL(\cX)|$ be a cylinder such that the map 
\[
\cC \setminus |\sL(\cX \setminus \cU)|\ \longrightarrow\ \sL(Y) \setminus \sL(Y \setminus U)
\]
induced by $\pi$ is a bijection on isomorphism classes of $k'$-points for all field extensions $k'$ of $k$. Then 
\[
	 \int_{\sL(Y)} \bL^{f} \diff\mu_Y = \int_{\cC \setminus |\sL(\cX \setminus \cU)|} \bL^{f \circ \sL(\pi)-\het_{\cX/Y}} \diff\mu_{\cX, \dim Y}.
\]
for every measurable function $f\colon \sL(Y) \to \Z \cup \{\infty\}$ where $\bL^f$ is integrable on $\sL(Y)$.
\end{theorem}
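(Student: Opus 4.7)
The plan is to reduce to the smooth change of variables formula \cite[Theorem 1.3]{SatrianoUsatine2} by passing through a smooth surjection $p\colon\cZ\to\cX$ with $\cZ$ a smooth Artin stack. Standard reductions allow us to replace $\cX$ by a finite type open substack containing the cylinder $\cC$, and to reduce to $f$ being a simple function on a measurable subset of $\sL(Y)$. Choosing $p$ so that the restriction $p^{-1}(\cU)\to\cU$ is well-behaved, set $\tilde\pi = \pi\circ p\colon\cZ\to Y$ and apply the smooth change of variables formula to $\tilde\pi$ on an appropriate cylinder $\tilde\cC\subset|\sL(\cZ)|$ lying over $\cC$:
\[
\int_{\sL(Y)}\bL^f\diff\mu_Y \;=\; \int_{\tilde\cC\setminus|\sL(\cZ\setminus\tilde\cU)|}\bL^{f\circ\sL(\tilde\pi)-\het_{\cZ/Y}}\diff\mu_\cZ.
\]

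The problem now reduces to proving a change of variables formula for $\sL(p)$ that converts the right-hand side into $\int_{\cC\setminus|\sL(\cX\setminus\cU)|}\bL^{f\circ\sL(\pi)-\het_{\cX/Y}}\diff\mu_{\cX,\dim Y}$. Writing $f\circ\sL(\tilde\pi) = (f\circ\sL(\pi))\circ\sL(p)$, the comparison is governed by the discrepancy function
\[
\Delta(\psi) \;:=\; \het_{\cZ/Y}(\psi) - \het_{\cX/Y}(p\circ\psi).
\]
The content of the reduction is thus twofold: on the measure side, to show that $\mu_{\cX,\dim Y}$ (Definition \ref{def:muxd}) is compatible with pushforward along $\sL(p)$, weighted appropriately by $\bL^{-\Delta}$; and on the integrand side, to identify $\Delta$ with the relative contribution of the smooth morphism $p$, which one expects to be a shift of $\het_{\cZ/\cX}$ together with the relative dimension data encoded in the $\dim Y$ normalization.

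The algebraic heart of the proof is this identification of $\Delta$. Unwinding Definition \ref{def:relhet} and pulling back the distinguished triangles
\[
L\pi^*L_Y\to L_\cX\to L_{\cX/Y}, \qquad Lp^*L_{\cX/Y}\to L_{\cZ/Y}\to L_{\cZ/\cX}
\]
along an arc $\psi\colon\Spec k'\llbracket t\rrbracket\to\cZ$, one obtains long exact sequences on cohomology. Since $p$ is smooth, $L_{\cZ/\cX}$ is locally free in degree zero, forcing $H^{-1}(L\psi^*L_{\cZ/\cX})=0=H^1(L\psi^*L_{\cZ/\cX})$ and $H^0(L\psi^*L_{\cZ/\cX})$ to be torsion-free. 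These vanishings precisely control how the torsion of $H^0(L(p\circ\psi)^*L_\cX)$ can fail to come from the torsion of $H^0(L(p\circ\psi)^*L\pi^*L_Y)$, and a diagram chase through the two long exact sequences identifies this failure with the cokernel term appearing in the definition of $\het_{\cX/Y}$.

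The main obstacle is precisely this torsion bookkeeping together with the simultaneous handling of infinite-dimensional contributions: since $H^i(L\varphi^*E)$ has both free and torsion parts over $k'\llbracket t\rrbracket$, the naive expression $\het^{(0)}_E-\het^{(1)}_E$ is formally of the shape $\infty-\infty$ and must be regularized so that the finite part is the alternating sum of torsion lengths plus the cokernel correction; showing the correction survives pushforward along $\sL(p)$ with the correct sign is the delicate point. A secondary difficulty is the correct formulation of $\mu_{\cX,\dim Y}$ in the non-equidimensional, locally finite type, singular setting—the use of $\dim Y$ (rather than any local notion of $\dim\cX$) in the normalization is exactly what makes pushforward along $\sL(p)$ behave uniformly across components of $\cX$ of differing dimension.
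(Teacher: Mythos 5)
There is a fundamental problem with your reduction: you propose to pass through a \emph{smooth surjection} $p\colon\cZ\to\cX$ with $\cZ$ a smooth Artin stack. No such $p$ exists when $\cX$ is singular, which is exactly the case the theorem is about — smoothness over $k$ descends along smooth surjections, so $\cZ$ smooth and $p$ smooth surjective would force $\cX$ to be smooth. Even setting this aside, a smooth surjection is not birational, so the bijectivity-on-arcs hypothesis of the smooth change of variables formula \cite[Theorem 1.3]{SatrianoUsatine2} would fail for $\tilde\pi=\pi\circ p$ (the fibers of $\sL(p)$ are huge), and the additivity statement you need for $\Delta$ requires a common dense open substack of $\cZ$ mapping by open immersions to both $\cX$ and $Y$, which a smooth cover does not provide. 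Consequently your claim that ``$L_{\cZ/\cX}$ is locally free in degree zero'' — the hinge of your torsion bookkeeping — rests on a morphism that cannot exist in the situation at hand.

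The correct device, which the paper uses, is a \emph{proper morphism representable by schemes} $\rho\colon\cZ\to\cX$ from a smooth irreducible finite type stack that is an isomorphism over $\cU$ (functorial resolution of singularities applied to the scheme-theoretic image of $\cU$, Proposition \ref{propositionResOfSingForStacks}). For such $\rho$ the valuative criterion gives a bijection $\overline{\cE}(k')\to\overline{\cC}(k')$ on arcs avoiding the exceptional locus; representability gives $\het^{(1)}_{L_{\cZ/\cX}}=0$ (not local freeness of $L_{\cZ/\cX}$ in degree $0$, which is false for a resolution); and the measures are compared not by a weighted pushforward along a smooth map but by showing the truncation fibers $\sL_n(\rho)^{-1}(\varphi_n)_{\red}\cong\bA^{h}_{k'}$ for $n\gg 0$ (Theorem \ref{theoremRepresentableChangeOfVariables}). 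The identification of the cokernel correction term via a diagram chase through the two long exact sequences — the part of your outline that does survive — is carried out in Proposition \ref{prop:additiveht-exact-triangle} for the composite $\cZ\to\cX\to Y$, using that the relevant $H^{-1}$'s are torsion because the arc is generically in $\cU$, together with torsion-freeness of $H^0(L\varphi^*L_\cZ)$ for smooth $\cZ$. You would also need the facts that images of small cylinders under $\sL(\pi)$ are cylinders (Proposition \ref{propositionImageOfCylinderIsCylinder}) and the approximation/stratification arguments of Lemmas \ref{lemmaMCVFMeasurableInsideCylinder} and \ref{lemmaMCVFSingleMeasurableSet} to pass from cylinders to general measurable $f$; these are not ``standard reductions'' in this locally-finite-type, non-equidimensional setting and are where the notions of small and $d$-convergent cylinders do real work.
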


\begin{remark}

See Theorem \ref{theoremMotivicChangeOfVariablesMeasurable-realversion} for a stronger version of the above theorem; in particular, Theorem \ref{theoremMotivicChangeOfVariablesMeasurable-realversion}(\ref{theoremMotivicChangeOfVariablesMeasurable-realversion::a}) shows that the left hand side of the above equation converges if and only if the right hand side does. See also Theorem \ref{theoremMCVFCylinder} for a variant.
\end{remark}

Since $\cX$ may be locally of finite type and non-equidimensional, proving \autoref{theoremMotivicChangeOfVariablesMeasurable} requires some extra bookkeeping. For this we introduce the notions of \emph{small cylinders} and \emph{$d$-convergent cylinders}, see Definitions \ref{def:smallcylinder} and \ref{def:d-measurability-and-measure}. The latter concept essentially involves fixing a ``virtual dimension'' $d$ for $\cX$ and only considering those sets that behave like cylinders in the $d$-dimensional case. This leads naturally to the measure $\mu_{\cX,d}$, see \autoref{def:muxd}.

Applying \autoref{theoremMotivicChangeOfVariablesMeasurable} to $\sW(\cX)$ gives our desired motivic change of variables formula for warped arcs. In the statement below, $\cC_\cX$ denotes the set of canonical warped lifts of $\sL(Y)$ and $\underline{\pi}\colon\sW(\cX)\to Y$ denotes a canonical map induced by $\pi$.

\begin{theorem}[{\cite[Theorem 1.18]{SatrianoUsatine4}}]
\label{thm:canonicalmcvf}
Keep Notation \ref{not:mainnot}, assume $\cX$ has affine diagonal, and assume $\cX \to Y$ is a good moduli space map. For any measurable function $f\colon\sL(Y) \to \Z \cup \{\infty\}$ with $\bL^f$ integrable on $\sL(Y)$, we have
\[
	 \int_{\sL(Y)} \bL^{f} \diff\mu_Y \ =\  \int_{\cC_\cX} \bL^{f \circ \sL(\underline{\pi}) - \het_{\sW(\cX)/Y}} \diff\mu_{\sW(\cX), \dim Y}.
\]
\end{theorem}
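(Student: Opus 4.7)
The plan is to deduce the theorem by invoking the general formula \autoref{theoremMotivicChangeOfVariablesMeasurable} applied to the induced morphism $\underline{\pi}\colon\sW(\cX)\to Y$, with the subset $\cC_\cX$ playing the role of the cylinder $\cC$ in the main theorem. In other words, we take ``$\cX$'' in the main theorem to be $\sW(\cX)$, ``$\pi$'' to be $\underline{\pi}$, and ``$\cC$'' to be $\cC_\cX$. Once all the hypotheses are verified, the conclusion of \autoref{theoremMotivicChangeOfVariablesMeasurable} specializes word-for-word to the claim, with the relative height function $\het_{\sW(\cX)/Y}$ appearing in the exponent exactly as in \autoref{def:relhet}.

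First I would verify the structural hypotheses of \autoref{not:mainnot} for $\underline{\pi}$. Drawing on the construction of the warping stack in \cite{SatrianoUsatine4}, I would check that $\sW(\cX)$ is a locally finite type Artin stack with affine geometric stabilizers and separated diagonal, and that, since the warping construction is trivial on any open locus where $\cX\to Y$ is already an isomorphism, the induced map $\underline{\pi}^{-1}(U)\to U$ remains an isomorphism. This step is exactly where we need the ``locally of finite type, singular, non-equidimensional'' generality afforded by \autoref{theoremMotivicChangeOfVariablesMeasurable}, since $\sW(\cX)$ may fail to be of finite type even when $\cX$ is smooth.

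Next I would show that $\cC_\cX \subseteq |\sL(\sW(\cX))|$ is a cylinder in the sense required by the main theorem, and that the map
\[
\cC_\cX \setminus |\sL(\sW(\cX) \setminus \underline{\pi}^{-1}(U))| \ \longrightarrow\ \sL(Y) \setminus \sL(Y\setminus U)
\]
induced by $\underline{\pi}$ is a bijection on isomorphism classes of $k'$-points for every field extension $k'/k$. The bijectivity is the essential content of \cite[Theorems~1.8 and 1.15]{SatrianoUsatine4}, which guarantee that every arc of $Y$ admits a unique canonical warped lift outside a set of measure zero, and that such warped lifts are interpreted as usual arcs of $\sW(\cX)$. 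The cylinder property can then be read off from the explicit description of $\cC_\cX$ produced in \cite{SatrianoUsatine4}.

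The hardest part will be confirming the cylinder condition for $\cC_\cX$, since $\sW(\cX)$ is typically singular, non-equidimensional, and only locally of finite type. This is precisely why we must integrate against the virtual-dimension measure $\mu_{\sW(\cX),\dim Y}$ from \autoref{def:muxd} rather than an ad hoc cylinder measure: fixing the virtual dimension to $\dim Y$ is what makes the weights from the warping construction balance so that both sides of \autoref{theoremMotivicChangeOfVariablesMeasurable} agree. Once the cylinder condition and the bijection are in hand, the theorem follows immediately by applying \autoref{theoremMotivicChangeOfVariablesMeasurable} to $\underline{\pi}$ and $\cC_\cX$.
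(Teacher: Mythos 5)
Your proposal matches the intended argument exactly: the paper does not reprove this result (it is quoted from [SatrianoUsatine4, Theorem 1.18]), but its introduction states that the proof consists precisely of applying \autoref{theoremMotivicChangeOfVariablesMeasurable} to $\underline{\pi}\colon\sW(\cX)\to Y$ with $\cC=\cC_\cX$, which is what you do. You also correctly identify where the real work lies — verifying that $\sW(\cX)$ satisfies the standing hypotheses, that $\cC_\cX$ is a cylinder, and that the induced map on arcs is a bijection on isomorphism classes of $k'$-points — all of which is established in [SatrianoUsatine4] rather than here.
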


Because $\sW(\cX)$ may not be smooth, equidimensional, or finite type, \autoref{theoremMotivicChangeOfVariablesMeasurable} is a crucial ingredient in the proof of \autoref{thm:canonicalmcvf}. Given any resolution $\pi\colon\cX\to Y$, this shows that any motivic integral over $Y$ can be expressed canonically as a motivic integral over warped arcs of $\cX$.


\subsection*{Conventions}

Throughout this paper, let $k$ be an algebraically closed field of characteristic 0. For any stack $\cX$ over $k$, we let $|\cX|$ denote its associated topological space, and for any subset $\cC \subset |\cX|$ and field extension $k'$ of $k$, we let $\cC(k')$ (resp. $\overline{\cC}(k')$) denote the category of (resp. set of isomorphism classes of) $k'$-valued points of $\cX$ whose class in $|\cX|$ is contained in $\cC$.

\section{Preliminaries}

In this section, we collect several properties of cylinders which will be useful throughout this paper.

\begin{definition}
Let $\cX$ be a locally finite type Artin stack over $k$, and let $\cC \subset |\sL(\cX)|$. We call $\cC$ a \emph{cylinder} if $\cC = \theta_n^{-1}(\cC_n)$ for some $n \in \Z_{\geq 0}$ and $\cC_n$ a locally constructible subset of $|\sL_n(\cX)|$.
\end{definition}

\begin{remark}
If $\cW$ is a locally finite type Artin stack over $k$ and $\cE \subset |\cW|$, then $\cE$ is locally constructible in $|\cW|$ if and only if for every quasi-compact open substack $\cV$ of $\cW$ we have $\cE \cap |\cV|$ is a constructible subset of $|\cV|$.
\end{remark}

Since our motivic change of variables formula applies to locally finite type stacks which are not necessarily quasi-compact, we require a boundedness assumption on our cylinders. This is given in the following definition.

\begin{definition}\label{def:smallcylinder}
Let $\cX$ be a locally finite type Artin stack over $k$, and let $\cC \subset |\sL(\cX)|$ be a cylinder. We call $\cC$ \emph{small} if $\theta_0(\cC)$ is contained in a quasi-compact subset of $|\cX|$.
\end{definition}

The following shows that boundedness of the $0$-trunction of a cylinder forces boundedness of all truncations.

\begin{proposition}
\label{propositionSmallCylinderHasQuasiCompactConstructibleImages}
Let $\cX$ be a locally finite type Artin stack over $k$, and let $\cC \subset |\sL(\cX)|$ be a small cylinder. If $n \in \Z_{\geq 0}$, then $\theta_n(\cC)$ is a quasi-compact locally constructible subset of $|\sL_n(\cX)|$.
\end{proposition}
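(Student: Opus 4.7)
The plan is to reduce the problem to the finite-type setting by cutting down to a quasi-compact open substack, and then to handle the local constructibility in two cases depending on how $n$ compares to the level of the cylinder.

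First, I would use the smallness hypothesis to choose a quasi-compact open substack $\cV \subset \cX$ with $\theta_0(\cC) \subset |\cV|$. Because $\cV$ is open in $\cX$ and the source of any $n$-jet or arc is a local scheme, any such jet or arc whose closed point maps into $|\cV|$ automatically factors through $\cV$: the preimage of $\cV$ is an open subscheme of the local source containing the closed point, hence equal to the whole source. In particular $\cC \subset |\sL(\cV)|$ and $\theta_n(\cC) \subset |\sL_n(\cV)|$. Since $\cV$ is of finite type, so is $\sL_n(\cV)$, so $|\sL_n(\cV)|$ is Noetherian and quasi-compact. This immediately gives quasi-compactness of $\theta_n(\cC)$ and reduces local constructibility in $|\sL_n(\cX)|$ to constructibility in $|\sL_n(\cV)|$.

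Next, write $\cC = \theta_m^{-1}(\cC_m)$ with $\cC_m$ locally constructible in $|\sL_m(\cX)|$; then $\cC_m \cap |\sL_m(\cV)|$ is constructible in $|\sL_m(\cV)|$. If $n \geq m$, the factorization $\theta_m = \theta_m^n \circ \theta_n$ yields
\[
\theta_n(\cC) = \theta_n(|\sL(\cV)|) \cap (\theta_m^n)^{-1}\bigl(\cC_m \cap |\sL_m(\cV)|\bigr),
\]
and the preimage on the right is constructible because $\theta_m^n\colon \sL_n(\cV) \to \sL_m(\cV)$ is a morphism of finite type stacks. If $n < m$, I would instead use $\theta_n = \theta_n^m \circ \theta_m$ together with $\theta_m(\cC) = \cC_m \cap \theta_m(|\sL(\cV)|)$ to compute
\[
\theta_n(\cC) = \theta_n^m\Bigl(\bigl(\cC_m \cap |\sL_m(\cV)|\bigr) \cap \theta_m(|\sL(\cV)|)\Bigr),
\]
and invoke Chevalley's theorem for the finite-type morphism $\theta_n^m$. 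In both cases the argument reduces to the following Greenberg-type statement: for every $j \geq 0$, the image $\theta_j(|\sL(\cV)|)$ is constructible in $|\sL_j(\cV)|$.

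The main obstacle is this stacky Greenberg-type constructibility, which I expect is either already recorded in \cite{SatrianoUsatine2} or handled by passing to a smooth atlas $V \to \cV$ by a finite-type scheme, applying the classical Greenberg theorem to the scheme $V$, and transferring constructibility along the smooth (hence open) surjection $\sL_j(V) \to \sL_j(\cV)$, which is compatible with the truncation maps. Assembling this input with the two case analyses above completes the proof.
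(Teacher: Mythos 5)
Your proof is correct and follows essentially the same route as the paper: use smallness to replace $\cX$ by a quasi-compact open substack (so the finite-type theory applies), then reduce the remaining constructibility to Chevalley plus the classical scheme-level Greenberg/cylinder-image result via a smooth atlas. The paper compresses your explicit case analysis and atlas argument into a citation of Chevalley's theorem for stacks together with \cite[Lemma 3.24]{SatrianoUsatine1} and the known scheme case, but the underlying reductions are the same.
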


\begin{proof}
Because $\theta_0(\cC)$ is contained in a quasi-compact subset of $|\cX|$, by replacing $\cX$ with a quasi-compact open substack that contains $\theta_0(\cC)$, we may assume that $\cX$ is finite type over $k$. Chevalley's Theorem for Artin stacks \cite[Theorem 5.1]{HallRydh17} and \cite[Lemma 3.24]{SatrianoUsatine1} then reduces the proposition to the case where $\cX$ is a scheme, which is well-known, see e.g., \cite[Chapter 5 Corollary 1.5.7(b)]{ChambertLoirNicaiseSebag}.
\end{proof}

\begin{remark}
Let $\cW$ be a locally finite type Artin stack over $k$ with affine geometric stabilizers and $\cE$ be a quasi-compact locally constructible subset of $|\cW|$. Then there exists a quasi-compact open substack $\cV$ of $\cW$ such that $\cE$ is a (constructible) subset of $|\cV|$, allowing us to associate to $\cE$ a class $ \e(\cE) \in \widehat{\sM}_k$. It is straightforward to verify that $\e(\cE)$ does not depend on the choice of $\cV$.

In particular if $\cX$ is a locally finite type Artin stack over $k$ with affine geometric stabilizers and $\cC \subset |\sL(\cX)|$ is a small cylinder, for every $n \in \Z_{\geq 0}$, we have a well defined class $\e(\theta_n(\cC)) \in \widehat{\sM}_k$ by \autoref{propositionSmallCylinderHasQuasiCompactConstructibleImages}.
\end{remark}

\begin{proposition}
\label{propositionSmallCylinderFiniteSubcover}
Let $\cX$ be a locally finite type Artin stack over $k$, and let $\cC \subset |\sL(\cX)|$ be a small cylinder. Then every cover of $\cC$ by cylinders has a finite subcover.
\end{proposition}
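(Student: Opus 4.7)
The plan is to reduce the proposition to the well-known scheme case by passing to a smooth atlas. First I would exploit the smallness of $\cC$: since $\theta_0(\cC)$ is contained in a quasi-compact open substack $\cX_0\subset\cX$, and since $\Spec k'\llbracket t\rrbracket$ has only a closed and a generic point, any arc whose closed-point image lies in $\cX_0$ automatically factors through $\cX_0$. Letting $\{\cV^{(\alpha)}\}_{\alpha\in A}$ denote the given cover of $\cC$ by cylinders, and intersecting everything with $|\sL(\cX_0)|$, I may assume $\cX$ is of finite type over $k$.

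Next I would choose a smooth surjection $\pi\colon X\to\cX$ from a finite type $k$-scheme $X$ and pull everything back along $\sL(\pi)$: set $\cC_X:=\sL(\pi)^{-1}(\cC)$ and $\cV^{(\alpha)}_X:=\sL(\pi)^{-1}(\cV^{(\alpha)})$. Since each $\sL_n(\pi)\colon\sL_n(X)\to\sL_n(\cX)$ is a morphism of locally finite type stacks, preimages of locally constructible subsets are locally constructible, so $\cC_X$ and the $\cV^{(\alpha)}_X$ are cylinders in $|\sL(X)|$, and the $\cV^{(\alpha)}_X$ cover $\cC_X$. The classical scheme version of the proposition (see e.g., \cite[Chapter 5]{ChambertLoirNicaiseSebag}) then extracts a finite subcover $\{\cV^{(\alpha_1)}_X,\ldots,\cV^{(\alpha_m)}_X\}$ of $\cC_X$.

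To transfer this back to $\cX$, given any arc $\varphi\colon\Spec k'\llbracket t\rrbracket\to\cX$ whose class lies in $\cC$, surjectivity of $\pi$ lifts the $0$-truncation $\varphi|_{\Spec k'}$ to $X$ after passing to a field extension $k''/k'$; iteratively applying formal smoothness of $\pi$ to the square-zero thickenings $\Spec k''[t]/(t^{n+1})\hookrightarrow\Spec k''[t]/(t^{n+2})$ then produces a compatible tower of lifts, which assembles (since $X$ is a finite type scheme) into an arc $\widetilde\varphi\colon\Spec k''\llbracket t\rrbracket\to X$ lifting $\varphi|_{\Spec k''\llbracket t\rrbracket}$. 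The class $[\widetilde\varphi]$ lies in $\cC_X$, hence in some $\cV^{(\alpha_i)}_X$, giving $[\varphi]\in\cV^{(\alpha_i)}$; thus $\{\cV^{(\alpha_1)},\ldots,\cV^{(\alpha_m)}\}$ is the sought finite subcover.

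The main obstacle I anticipate is this last step --- showing that $|\sL(X)|\to|\sL(\cX)|$ is surjective on underlying topological spaces. The nontrivial content is not the geometric-point lift but the lifting of the entire arc along $\pi$, using the infinitesimal lifting property at each level and the equality $X(k''\llbracket t\rrbracket)=\varprojlim_n X(k''[t]/(t^{n+1}))$ for the finite type scheme $X$. The remaining ingredients --- the reduction to $\cX$ of finite type and the preservation of cylinders under $\sL(\pi)$ --- are essentially formal.
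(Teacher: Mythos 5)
Your proof is correct and takes essentially the same route as the paper's: the paper's argument consists precisely of your first reduction (using smallness to replace $\cX$ by a quasi-compact open substack and intersecting the cover with $|\sL(\cX_0)|$, so that $\cX$ becomes finite type) followed by a citation of \cite[Proposition 2.2]{SatrianoUsatine2}, which is exactly the finite-type statement you then re-derive. The only difference is that you unfold that citation, reducing to the classical scheme case via a smooth atlas and the surjectivity of $|\sL(X)|\to|\sL(\cX)|$ (proved by infinitesimal lifting along the smooth cover), which is indeed a valid proof of the cited proposition.
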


\begin{proof}
Let $\{\cC^{(i)}\}_i$ be a collection of cylinders in $|\sL(\cX)|$ such that $\cC \subset \bigcup_i \cC^{(i)}$. Because $\cC$ is small, there exists a quasi-compact open substack $\cX_1$ of $\cX$ such that $\theta_0(\cC) \subset |\cX_1|$. Replacing $\cX$ with $\cX_1$ and each $\cC^{(i)}$ with $\cC^{(i)} \cap |\sL(\cX_1)|$, we assume that $\cX$ is finite type. The result then follows from \cite[Proposition 2.2]{SatrianoUsatine2}.
\end{proof}

We next relate cylinders in a stack to that of a smooth cover.

\begin{lemma}
\label{lemmaCylinderCanBeCheckedInSmoothCover}
Let $\eta: \cW \to \cY$ be a smooth surjective morphism of finite type Artin stacks over $k$, and let $\cD \subset |\sL(\cY)|$. If $\sL(\eta)^{-1}(\cD)$ is a cylinder in $|\sL(\cW)|$, then $\cD$ is a cylinder in $|\sL(\cY)|$.
\end{lemma}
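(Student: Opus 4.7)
The plan is to produce, from the given presentation $\sL(\eta)^{-1}(\cD) = \theta_n^{-1}(\cC_n)$ with $\cC_n \subset |\sL_n(\cW)|$ constructible, an explicit constructible set $\cD_n \subset |\sL_n(\cY)|$ such that $\cD = \theta_n^{-1}(\cD_n)$. The natural candidate is $\cD_n := \sL_n(\eta)(\cC_n)$. Since $\eta$ is smooth and both $\cW$ and $\cY$ are finite type over $k$, the truncation $\sL_n(\eta) : \sL_n(\cW) \to \sL_n(\cY)$ is a smooth surjective morphism of finite type Artin stacks. Hence Chevalley's theorem for Artin stacks (in the form used in the proof of \autoref{propositionSmallCylinderHasQuasiCompactConstructibleImages}) applies and gives that $\cD_n$ is constructible.

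The main work is to verify $\cD = \theta_n^{-1}(\cD_n)$ at the level of isomorphism classes of points. For the inclusion $\cD \subseteq \theta_n^{-1}(\cD_n)$, I would take $\varphi \in \cD(k')$ and use the smoothness and surjectivity of $\eta$ to produce (after a possible field extension $k''/k'$, which is harmless at the level of $|\sL(\cY)|$) a lift $\psi \in \sL(\cW)(k'')$ of $\varphi$; then $\psi \in \sL(\eta)^{-1}(\cD) = \theta_n^{-1}(\cC_n)$, so $\theta_n(\varphi) = \sL_n(\eta)(\theta_n(\psi)) \in \cD_n$. For the reverse inclusion, given $\varphi$ with $\theta_n(\varphi) \in \cD_n$, I would pick a jet $\bar\psi \in \cC_n$ with $\sL_n(\eta)(\bar\psi) = \theta_n(\varphi)$ in $|\sL_n(\cY)|$, and then extend $\bar\psi$ to a full arc $\psi$ of $\cW$ whose composition with $\eta$ agrees with $\varphi$; this places $\psi$ in $\theta_n^{-1}(\cC_n) = \sL(\eta)^{-1}(\cD)$, giving $\varphi \in \cD$.

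The hard part will be this last lifting step, i.e.\ extending a jet $\bar\psi$ of $\cW$ to a full arc compatible with a prescribed arc $\varphi$ of $\cY$. This is where formal smoothness of $\eta$ is essential: one iterates the infinitesimal lifting criterion along the tower
\[
\sL_{m+1}(\cW) \longrightarrow \sL_m(\cW) \times_{\sL_m(\cY)} \sL_{m+1}(\cY),
\]
at each stage choosing a 2-isomorphism between $\eta\circ\bar\psi_m$ and $\theta_m(\varphi)$ that is compatible with the previously chosen one. Because membership in $\cD$, $\cC_n$, and $\cD_n$ is tested only on the topological spaces $|\sL|$ and $|\sL_n|$, all the 2-isomorphisms and possible field extensions encountered along the way are immaterial, and the inductive lift assembles into the desired arc $\psi$.
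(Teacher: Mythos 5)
Your plan is correct in outline and lands on the same key ingredients as the paper: the candidate $\cD_n=\sL_n(\eta)(\cC_n)$, Chevalley's theorem for constructibility, and smooth lifting of arcs for the set-theoretic identity $\cD=\theta_n^{-1}(\cD_n)$. The forward inclusion is identical. Where you diverge is in the reverse inclusion and in the bookkeeping. The paper first replaces $\cW$ by a scheme cover $W$ and replaces $\cC_n$ by $\theta_n(C)$ where $C=\sL(\eta)^{-1}(\cD)$; then, given $\alpha$ with $\theta_n(\alpha)\in\sL_n(\eta)(C_n)$, it lifts the \emph{entire arc} $\alpha$ to some $\varphi\in\sL(W)$ with no constraint on its $n$-jet, and verifies $\theta_n(\varphi)\in C_n$ using the identity $\sL_n(\eta)^{-1}(\theta_n(\cD))=\theta_n(C)$, which is quoted from \cite[Lemma 3.24]{SatrianoUsatine1}. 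This arrangement entirely avoids your self-identified ``hard part'' of extending a \emph{prescribed} jet $\bar\psi\in\cC_n$ to an arc lying over a prescribed arc of $\cY$ (or rather, it packages that difficulty into the cited lemma and the surjectivity of $\sL(\eta)$ on arcs). Your direct relative-lifting argument along the tower $\sL_{m+1}(\cW)\to\sL_m(\cW)\times_{\sL_m(\cY)}\sL_{m+1}(\cY)$ does work, but carried out on the stack $\cW$ itself it needs the deformation theory of morphisms to Artin stacks (vanishing of the obstruction groups over an affine base, plus care with compatible $2$-isomorphisms and the identification of arcs with pro-jets); the paper's initial reduction to a scheme cover is precisely what makes every lifting step elementary and $1$-categorical, and I would recommend performing that reduction first in your write-up as well. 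With that reduction in place, the two proofs are essentially the same.
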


\begin{proof}
By replacing $\cW$ with a smooth cover by a finite type $k$-scheme, we may assume that $\cW = W$ is a scheme.

Set $C = \sL(\eta)^{-1}(\cD)$. We will use throughout this proof that because $\eta$ is smooth and surjective, $\sL(\eta): \sL(W) \to |\sL(\cY)|$ is surjective. In particular, $\cD = \sL(\eta)(C)$. Because $C$ is a cylinder, there exists some $n$ and a constructible subset $C_n \subset \sL_n(W)$ such that $C = \theta_n^{-1}(C_n)$. Because $C \subset \theta_n^{-1}(\theta_n (C)) \subset \theta_n^{-1}(C_n) = C$, we may replace $C_n$ with the constructible set $\theta_n(C)$ and thus assume that $C_n = \theta_n(C)$. By Chevalley's theorem for Artin stacks, it is sufficient to show that
\[
	\cD = \theta_n^{-1}(\sL_n(\eta)(C_n)).
\]
For the first direction of this equality, if $\alpha \in \cD$ then
\[
	\theta_n(\alpha) \in \theta_n(\cD) = \theta_n(\sL(\eta)(C)) = \sL_n(\eta)(\theta_n(C)) = \sL_n(\eta)(C_n).
\]
Thus $\cD \subset \theta_n^{-1}(\sL_n(\eta)(C_n))$. For the other direction, let $\alpha \in \theta_n^{-1}(\sL_n(\eta)(C_n))$ and let $\varphi \in \sL(W)$ be such that $\alpha = \sL(\eta)(\varphi)$. Then
\[
	\sL_n(\eta)(\theta_n(\varphi)) = \theta_n(\sL_n(\eta)(\varphi)) = \theta_n(\alpha) \in \sL_n(\eta)(C_n) = \theta_n(\cD),
\]
so
\[
	\theta_n(\varphi) \in \sL_n(\eta)^{-1}(\theta_n(\cD)) = \theta_n(C) = C_n
\]
where the first equality of sets follows from \cite[Lemma 3.24]{SatrianoUsatine1}. Therefore $\varphi \in \theta_n^{-1}(C_n) = C$, so $\alpha = \sL(\eta)(\varphi) \in \sL(\eta)(C) = \cD$ and we are done.
\end{proof}

The following shows a type of base change result for cylinders.

\begin{lemma}
\label{lemmaPreimageImageImagePreimage}
Let $\cX, \cY, \cW$ be locally finite type Artin stacks over $k$, let $\pi: \cX \to \cY$ and $\eta: \cW \to \cY$ be morphisms, and let $\cC \subset |\sL(\cX)|$. Set $\cZ = \cX \times_\cY \cW$ and let $\rho: \cZ \to \cW$ and $\xi: \cZ \to \cX$ be the projection maps. Then
\[
	\sL(\rho)(\sL(\xi)^{-1}(\cC)) = \sL(\eta)^{-1}(\sL(\pi)(\cC)).
\]
\end{lemma}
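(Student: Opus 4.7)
The plan is to reduce the identity to the standard set-theoretic identity $p(q^{-1}(C)) = g^{-1}(f(C))$ for the legs of a Cartesian square, using the crucial fact that arc stack formation commutes with fiber products, so that
\[
\sL(\cZ) \;\cong\; \sL(\cX) \times_{\sL(\cY)} \sL(\cW).
\]
The forward inclusion is formal: for any $\gamma \in \sL(\xi)^{-1}(\cC)$, the commutativity $\pi \circ \xi = \eta \circ \rho$ yields
\[
\sL(\eta)(\sL(\rho)(\gamma)) \;=\; \sL(\pi)(\sL(\xi)(\gamma)) \;\in\; \sL(\pi)(\cC),
\]
so $\sL(\rho)(\gamma) \in \sL(\eta)^{-1}(\sL(\pi)(\cC))$.

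For the reverse inclusion, the content is a universal-property argument at the level of points. Suppose $\beta \in \sL(\eta)^{-1}(\sL(\pi)(\cC))$, so $\sL(\eta)(\beta) = \sL(\pi)(\alpha)$ for some $\alpha \in \cC$. Represent $\alpha$ and $\beta$ by arcs $\varphi_\cX \colon \Spec k' \llbracket t \rrbracket \to \cX$ and $\varphi_\cW \colon \Spec k'' \llbracket t \rrbracket \to \cW$, respectively. Equality in $|\sL(\cY)|$ means that after passing to a common field extension $K$ containing both $k'$ and $k''$, the base-changed arcs $\pi \circ \varphi_\cX|_K$ and $\eta \circ \varphi_\cW|_K$ become isomorphic as objects of $\cY(\Spec K \llbracket t \rrbracket)$. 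Choosing such an isomorphism and invoking the universal property of the fiber product $\cZ = \cX \times_\cY \cW$ produces an arc $\varphi_\cZ \colon \Spec K \llbracket t \rrbracket \to \cZ$ with $\xi \circ \varphi_\cZ \cong \varphi_\cX|_K$ and $\rho \circ \varphi_\cZ \cong \varphi_\cW|_K$. Letting $\gamma \in |\sL(\cZ)|$ denote the point determined by $\varphi_\cZ$, we obtain $\sL(\xi)(\gamma) = \alpha \in \cC$ and $\sL(\rho)(\gamma) = \beta$, completing the argument.

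The main technical point, and essentially the only obstacle, is to handle correctly the translation between points of $|\sL(-)|$ and arcs up to isomorphism over field extensions: two arcs over possibly different residue fields determine the same point of $|\sL(\cY)|$ precisely when they become isomorphic after passing to a common extension. Once this is made precise, the proof is a direct consequence of the universal property of the fiber product together with the compatibility of $\sL(-)$ with fiber products, and no finite type or smoothness hypothesis is required.
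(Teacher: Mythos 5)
Your proposal is correct and follows essentially the same route as the paper: the forward inclusion is the same formal computation from $\pi\circ\xi = \eta\circ\rho$, and the reverse inclusion is the same argument of passing to a common field extension so that the two arcs become isomorphic over $\cY$ and then invoking the universal property of the $2$-fiber product to produce the required arc of $\cZ$.
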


\begin{proof}
For the first direction, let $\alpha \in \sL(\rho)(\sL(\xi)^{-1}(\cC))$, and let $\varphi \in \sL(\xi)^{-1}(\cC)$ such that $\alpha = \sL(\rho)(\varphi)$. Then
\[
	\sL(\eta)(\alpha) = \sL(\eta)(\sL(\rho)(\varphi)) = \sL(\pi)(\sL(\xi)(\varphi)) \in \sL(\pi)(\cC),
\]
so $\sL(\rho)(\sL(\xi)^{-1}(\cC)) \subset \sL(\eta)^{-1}(\sL(\pi)(\cC))$.

For the other direction, let $\alpha \in (\sL(\eta)^{-1}(\sL(\pi)(\cC)))(k')$ and by slight abuse of notation, also let $\alpha$ denote the corresponding map $\Spec(k'\llbracket t \rrbracket) \to \cW$. Possibly by extending $k'$, we may assume there exists $\psi \in \cC(k')$ such that $\sL(\pi)(\psi) \cong \sL(\eta)(\alpha)$. Again by slight abuse of notation, also let $\psi$ denote the corresponding map $\Spec(k'\llbracket t \rrbracket) \to \cX$, so $\pi \circ \psi \cong \eta \circ \alpha$. Thus there exists $\varphi: \Spec(k' \llbracket t \rrbracket) \to \cZ$ such that $\xi \circ \varphi = \psi$ and $\rho \circ \varphi = \alpha$. By slight abuse of notation, let $\varphi$ also denote the corresponding object of $(\sL(\cZ))(k')$, so $\sL(\xi)(\varphi) = \psi$ and $\sL(\rho)(\varphi) = \alpha$. Therefore $\varphi \in (\sL(\xi)^{-1}(\cC))(k')$, so $\alpha \in (\sL(\rho)(\sL(\xi)^{-1}(\cC)))(k')$ and we are done.
\end{proof}

We now prove a liftability lemma for elements in the truncation of a cylinder.

\begin{lemma}
\label{lemmaLiftJetToArcSameField}
Let $\cX$ be an equidimensional finite type Artin stack over $k$ with separated diagonal and affine geometric stabilizers, let $\cU$ be a smooth open substack of $\cX$, and let $\cC \subset |\sL(\cX)|$ be a cylinder that is disjoint from $|\sL(\cX \setminus \cU)|$. There exists some $N_\cC$ such that for any $n \geq N_{\cC}$, any field extension $k'$ of $k$, and any $\varphi_n \in (\theta_n(\cC))(k')$, there exists some $\varphi \in \cC(k')$ such that $\theta_n(\varphi) \cong \varphi_n$.
\end{lemma}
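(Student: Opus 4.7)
The plan is to reduce to the scheme case of the lemma, which is the classical Greenberg--Denef--Loeser liftability result for cylinders in jet spaces. The reduction proceeds via a smooth atlas of $\cX$.

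I would begin by choosing a smooth surjective morphism $\eta\colon W \to \cX$ with $W$ an equidimensional finite-type affine $k$-scheme, setting $V := \eta^{-1}(\cU) \subset W$ (a smooth open subscheme), and $\widetilde{\cC} := \sL(\eta)^{-1}(\cC) \subset |\sL(W)|$. Writing $\cC = \theta_n^{-1}(\cC_n)$, the naturality identity $\theta_n \circ \sL(\eta) = \sL_n(\eta) \circ \theta_n$ combined with continuity of $\sL_n(\eta)$ yields $\widetilde{\cC} = \theta_n^{-1}(\sL_n(\eta)^{-1}(\cC_n))$, so $\widetilde{\cC}$ is a cylinder in $|\sL(W)|$. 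Since any arc of $W$ with image entirely in $W \setminus V$ composes with $\eta$ to an arc of $\cX$ entirely in $\cX \setminus \cU$ and hence cannot lie in $\cC$, the cylinder $\widetilde{\cC}$ is disjoint from $|\sL(W \setminus V)|$. Thus $(W, V, \widetilde{\cC})$ satisfies the hypotheses of the scheme-level liftability lemma, producing an integer $N$ such that for $n \geq N$, every $k'$-point of $\theta_n(\widetilde{\cC})$ lifts to a $k'$-point of $\widetilde{\cC}$. I would then take $N_\cC := N$.

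Given $\varphi_n \in \theta_n(\cC)(k')$ with $n \geq N_\cC$, the strategy is to first lift $\varphi_n$ to some $\widetilde{\varphi}_n \in \theta_n(\widetilde{\cC})(k')$ along $\sL_n(\eta)$, then apply the scheme case to obtain an arc $\widetilde{\varphi} \in \widetilde{\cC}(k')$, and finally set $\varphi := \eta \circ \widetilde{\varphi} \in \cC(k')$, for which $\theta_n(\varphi) = \sL_n(\eta)(\theta_n(\widetilde{\varphi})) \cong \sL_n(\eta)(\widetilde{\varphi}_n) \cong \varphi_n$, as desired.

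The main obstacle is ensuring the lift $\widetilde{\varphi}_n$ can be chosen over the \emph{same} field $k'$ rather than an extension. Since $\eta$ is smooth, so is $\sL_n(\eta)$, and the fiber $\sL_n(\eta)^{-1}(\varphi_n)$ is a nonempty smooth $k'$-scheme; however, smooth nonempty $k'$-schemes need not have $k'$-rational points in general. I expect the proof to handle this by invoking the étale-local sectional nature of smooth surjections: produce a lift $\widetilde{\varphi}_n^{k''}$ after passing to a finite separable extension $k''/k'$, apply the scheme case to obtain $\widetilde{\varphi}^{k''} \in \widetilde{\cC}(k'')$, and then descend the resulting $k''$-arc of $\cX$ back to $k'$ using that its image downstairs is already defined over $k'$ (with separatedness of the diagonal and affinity of stabilizers ensuring isomorphism classes descend suitably). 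This same-field descent step, together with the routine cylinder manipulations, constitutes the technical heart of the argument.
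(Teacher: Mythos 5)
Your overall architecture (pull the cylinder back along a smooth atlas, invoke the scheme-level liftability result there, and push the resulting arc back down) matches the paper's. But you have correctly located the hard point --- producing the jet lift $\widetilde{\varphi}_n$ over the \emph{same} field $k'$ --- and your proposed resolution of it is where the argument breaks. Descending the $k''$-arc $\eta\circ\widetilde{\varphi}^{k''}$ back to $k'$ is not justified by the fact that its truncation $\varphi_n$ is defined over $k'$: a $k'$-form of the $n$-jet gives no descent datum on the full arc, and even if the arc were isomorphic to its Galois conjugates there is no canonical system of isomorphisms making the descent datum effective for a point of an Artin stack. What the statement asks for is existence of \emph{some} $k'$-arc with the right truncation, and your construction only produces one over $k''$; no mechanism in your sketch brings it down.

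The paper sidesteps this entirely by choosing the atlas more carefully at the outset: by Deshmukh's theorem (using the affine geometric stabilizers and characteristic $0$), one can take a smooth cover $\xi\colon X\to\cX$ such that $X(k')\to\overline{\cX}(k')$ is \emph{surjective for every field extension} $k'$ of $k$. Then the $0$-jet of $\varphi_n$ lifts to a $k'$-point of $X$, and smoothness of $\xi$ (infinitesimal lifting) propagates this to a $k'$-point $\psi_n$ of $\sL_n(X)$ over $\varphi_n$; one checks $\psi_n\in(\theta_n(C))(k')$ (this check is allowed to pass through a field extension, since membership in $\theta_n(C)$ is a topological condition), applies the scheme case of the lemma over $k'$, and pushes down. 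So the missing ingredient in your proof is this rationality property of the cover; with a generic atlas the same-field lifting genuinely fails, and no descent argument of the kind you outline repairs it.
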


\begin{proof}
Because $\cC$ is a cylinder, there exists some $N_0 \in \Z_{\geq 0}$ such that $\cC$ is the preimage along $\theta_{N_0}$ of some constructible subset of $\sL_{N_0}(\cC)$. For any $n \geq N_0$,
\[
	\cC = \theta_n^{-1}(\theta_n(\cC)).
\]
Let $\xi: X \to \cX$ be a smooth cover by a finite type $k$-scheme $X$ such that for all field extensions $k'$ of $k$, the map $X(k') \to \overline{\cX}(k')$ is surjective. Such a smooth cover exists, e.g, by \cite[Theorem 1.2(b)]{Deshmukh}, and by \cite[Lemmas 0DRP and 0DRQ]{stacks-project} we may assume that $X$ is equidimensional. Let $C = \sL(\xi)^{-1}(\cC) \subset \sL(X)$. Noting that $\xi^{-1}(\cU)$ is smooth and that $C$ is disjoint from $\sL(X) \setminus \sL(\xi^{-1}(\cU))$, \cite[Lemma 7.5]{SatrianoUsatine1} implies there exists some $N_C$ such that for any $n \geq N_{\cC}$, any field extension $k'$ of $k$, and any $\psi_n \in (\theta_n(C))(k')$, there exists some $\psi \in C(k')$ such that $\theta_n(\psi) = \psi_n$. Note that the proof of \cite[Lemma 7.5]{SatrianoUsatine1} still holds when the irreducibility hypothesis is replaced with equidimensionality. Set $N_\cC = \max(N_0, N_C)$.

Let $n \geq N_\cC$, let $k'$ be a field extension of $k$, and let $\varphi_n \in (\theta_n(\cC))(k')$. Because $X(k') \to \overline{\cX}(k')$ is surjective and $X \to \cX$ is smooth, there exists some $\psi_n \in \sL_n(X)(k')$ such that $\sL_n(\xi)(\psi_n) \cong \varphi_n$. Because $\varphi_n \in (\theta_n(\cC))(k')$, there exists some field extension $k''$ of $k'$ and some $\varphi' \in \cC(k'')$ such that $\theta_n(\varphi') \cong \varphi_n \otimes_{k'} k''$. Because $X \to \cX$ is smooth, there exists some $\psi' \in \sL(X)(k'')$ such that $\sL(\xi)(\psi') \cong \varphi'$ and $\theta_n(\psi') \cong \psi_n \otimes_{k'} k''$. Thus $\psi_n \in (\theta_n(C))(k')$, so by our choice of $N_C$, there exists some $\psi \in C(k')$ such that $\theta_n(\psi) = \psi_n$. Set $\varphi = \sL(\xi)(\psi) \in \sL(\cX)(k')$. Then $\theta_n(\varphi) \cong \varphi_n$. Because $n \geq N_0$, we have $\cC = \theta_n^{-1}(\theta_n(\cC))$. Thus $\varphi \in \cC(k')$, and we are done.
\end{proof}

We end this section with the following general result.

\begin{lemma}
\label{lemmaNotArcOfBoundaryImpliesTruncationInClosure}
Let $\cX$ be a locally finite type Artin stack over $k$. If $\cU$ is an open substack of $\cX$, then $\theta_0(|\sL(\cX)| \setminus |\sL(\cX \setminus \cU)|)$ is contained in the closure of $|\cU|$ in $|\cX|$. Furthermore, if $\cW$ is a closed substack of $\cX$ such that $|\cU| \subset |\cW|$, then
\[
	|\sL(\cX)| \setminus |\sL(\cX \setminus \cU)| \subset |\sL(\cW)|.
\]
\end{lemma}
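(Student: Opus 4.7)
The plan is to analyze the underlying topological map $|\varphi|\colon |\Spec k'\llbracket t\rrbracket| \to |\cX|$ induced by an arc and exploit the fact that $\Spec k'\llbracket t\rrbracket$ has only two points — the generic point $\eta$ and the closed point $\mathfrak m$ — with $\mathfrak m \in \overline{\{\eta\}}$.

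First, let $\alpha \in |\sL(\cX)| \smallsetminus |\sL(\cX \setminus \cU)|$, represented by an arc $\varphi\colon \Spec k'\llbracket t\rrbracket \to \cX$. Since the closed immersion $\cX \setminus \cU \hookrightarrow \cX$ induces a closed immersion $\sL(\cX \setminus \cU) \hookrightarrow \sL(\cX)$ whose image on topological spaces is precisely $|\sL(\cX \setminus \cU)|$, the hypothesis says $\varphi$ does not factor through the reduced closed substack $\cX \setminus \cU$. Because $\Spec k'\llbracket t\rrbracket$ is reduced, this is equivalent to the set-theoretic statement that the image of $|\varphi|$ is not contained in $|\cX| \smallsetminus |\cU|$.

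I would then rule out the case $|\varphi|(\eta) \notin |\cU|$: since $|\cX|\smallsetminus |\cU|$ is closed and $\mathfrak m$ is a specialization of $\eta$, if $|\varphi|(\eta) \in |\cX|\smallsetminus |\cU|$ then $|\varphi|(\mathfrak m) \in |\cX|\smallsetminus |\cU|$ as well, contradicting the previous paragraph. Hence $|\varphi|(\eta)\in |\cU|$, and because $|\varphi|(\mathfrak m)$ is a specialization of $|\varphi|(\eta)$, we get
\[
\theta_0(\varphi)=|\varphi|(\mathfrak m)\in \overline{|\cU|},
\]
which proves the first assertion.

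For the second assertion, if $\cW\subset \cX$ is closed with $|\cU|\subset |\cW|$, then $\overline{|\cU|}\subset |\cW|$, so the two-point image of $|\varphi|$ is contained in $|\cW|=|\cW_{\mathrm{red}}|$. Because $\Spec k'\llbracket t\rrbracket$ is reduced, $\varphi$ factors (uniquely) through the reduced induced closed substack $\cW_{\mathrm{red}}$, and hence through the closed immersion $\cW_{\mathrm{red}}\hookrightarrow \cW \hookrightarrow \cX$; this gives $\alpha \in |\sL(\cW)|$. The only nontrivial point is the passage from ``set-theoretic image in $|\cW|$'' to ``scheme-theoretic factorization through $\cW$'' for a morphism into an Artin stack, which I expect to be the main (and really only) obstacle in the write-up. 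For Artin stacks this is handled by appealing to the universal property of the reduced closed substack $\cW_{\mathrm{red}}$: a morphism from a reduced source factors through $\cX_{\mathrm{red}}$, and any such morphism whose image lies in the closed subset $|\cW_{\mathrm{red}}|\subset |\cX_{\mathrm{red}}|$ factors through $\cW_{\mathrm{red}}$ (e.g., check this after pulling back along a smooth atlas of $\cX$, where it is the standard scheme-theoretic statement).
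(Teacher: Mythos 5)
Your proposal is correct and follows essentially the same route as the paper's proof: the generic point of $\Spec k'\llbracket t\rrbracket$ must land in $|\cU|$ (you spell out the specialization argument the paper leaves implicit), so the closed point lands in $\overline{|\cU|}$, and reducedness of $\Spec k'\llbracket t\rrbracket$ gives the factorization through $\cW$ for the second assertion.
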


\begin{proof}
Let $k'$ be a field extension of $k$, and let $\varphi: \Spec(k'\llbracket t \rrbracket) \to \cX$ correspond to a $k'$-point of $\sL(\cX)$ that is not in $\sL(\cX \setminus \cU)$. Then the generic point $\Spec(k'\llparenthesis t \rrparenthesis) \to \Spec(k' \llbracket t \rrbracket) \xrightarrow{\varphi} \cX$ factors through $\cU$, so the special point $\theta_0(\varphi): \Spec(k') \hookrightarrow \Spec(k\llbracket t \rrbracket) \xrightarrow{\varphi} \cX$ has image in the closure of $|\cU|$ in $|\cX|$, verifying the first sentence of the proposition. We have also shown
\[
	\varphi(|\Spec(k'\llbracket t \rrbracket)|) \subset |\cW|.
\]
Then because $\Spec(k'\llbracket t \rrbracket)$ is reduced, $\varphi$ factors through $\cW$, so we have verified the second sentence of the proposition.
\end{proof}

\section{Motivic change of variables for representable resolutions of singularities}

We will ultimately prove Theorem \ref{theoremMotivicChangeOfVariablesMeasurable} by reducing to the case of smooth stacks and making use of our motivic change of variables formula \cite[Theorem 1.3]{SatrianoUsatine2}. The first step in this reduction is to prove the following change of variables formula for representable resolutions of singularities.

\begin{theorem}
\label{theoremRepresentableChangeOfVariables}
Let $\cX$ be a locally finite type Artin stack over $k$ with affine geometric stabilizers and separated diagonal. Let $\cZ$ be an irreducible smooth finite type Artin stack over $k$, and let $\pi: \cZ \to \cX$ be a morphism. Assume that $\pi$ factors as a proper morphism that is representable by schemes followed by an open immersion. Let $\cU$ be an open substack of $\cX$ such that $\pi^{-1}(\cU) \to \cU$ is an isomorphism, let $\cC \subset |\sL(\cX)|$ be a cylinder that is disjoint from $|\sL(\cX \setminus \cU)|$, and assume that $\theta_0(\cC) \subset \pi(|\cZ|)$. Set $\cE = \sL(\pi)^{-1}(\cC)$.

\begin{enumerate}[(a)]

\item\label{theoremPartCylinderBijection} The subset $\cE \subset |\sL(\cZ)|$ is a cylinder, and for any field extension $k'$ of $k$, the map $\overline{\cE}(k') \to \overline{\cC}(k')$ induced by $\pi$ is a bijection.

\item\label{theoremPartConstructibleRepresentableOfMCVF} The restriction of $\het^{(0)}_{L_{\cZ/\cX}}: |\sL(\cZ)| \to \Z \cup \{\infty\}$ to $\cE$ is integer valued and constructible.

\item\label{theoremPartIntegralOfMCVF} There exists $N \in \Z_{\geq 0}$ such that for all $n \in \Z_{\geq N}$,
\[
	\e(\theta_n(\cC))\bL^{-(n+1)\dim\cZ} = \int_{\cE} \bL^{-\het^{(0)}_{L_{\cZ/\cX}}}\diff\mu_\cZ.
\]

\end{enumerate}
\end{theorem}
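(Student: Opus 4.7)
The plan is first to reduce to the case where $\pi$ is proper and representable by schemes with $\cX$ of finite type over $k$. Factor $\pi = j \circ p$ with $p$ proper representable by schemes and $j \colon \cX' \hookrightarrow \cX$ an open immersion; since $\theta_0(\cC) \subset \pi(|\cZ|) \subset |\cX'|$ and an arc $\Spec k'\llbracket t \rrbracket \to \cX$ factors through an open substack if and only if its special point lies there, every arc in $\cC$ factors through $\cX'$, so we may replace $\cX$ by $\cX'$. Since $\cZ$ is finite type and $p$ proper, $p(|\cZ|)$ is quasi-compact, so we may further shrink $\cX$ to a finite-type open substack containing $p(|\cZ|)$, whereupon $\cC$ becomes small. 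For part (a), writing $\cC = \theta_n^{-1}(\cC_n)$ yields $\cE = \theta_n^{-1}(\sL_n(\pi)^{-1}(\cC_n))$, and Chevalley's theorem applied to the finite-type morphism $\sL_n(\pi)$ shows the inner set is locally constructible, so $\cE$ is a cylinder. For the bijection $\overline{\cE}(k') \to \overline{\cC}(k')$: by \autoref{lemmaNotArcOfBoundaryImpliesTruncationInClosure} and the disjointness hypothesis, any $\varphi \in \cC(k')$ factors generically through $\cU$; since $\pi^{-1}(\cU) \to \cU$ is an isomorphism the generic lift to $\cZ$ is unique, and the valuative criterion for the proper morphism $\pi$ extends it uniquely to an arc.

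For parts (b) and (c), I would pass to a smooth surjective cover $\xi \colon X \to \cX$ by a finite-type $k$-scheme; the base change $p \colon Z := \cZ \times_\cX X \to X$ is proper, representable (hence a morphism of schemes), and birational, with $Z$ smooth. Flat base change gives $L_{Z/X} \simeq L\xi_Z^* L_{\cZ/\cX}$ (with $\xi_Z \colon Z \to \cZ$ the other projection), so the height functions match under pullback. Setting $C := \sL(\xi)^{-1}(\cC)$ and $E := \sL(p)^{-1}(C) = \sL(\xi_Z)^{-1}(\cE)$, \autoref{lemmaCylinderCanBeCheckedInSmoothCover} and \autoref{lemmaPreimageImageImagePreimage} translate the cylinder and constructibility statements between $\cE$ and $E$, while the compatibility of $\e$ and $\mu$ with smooth covers (established in the authors' earlier work) reduces the integral identity for $\cZ / \cX$ to the scheme-level identity for $Z / X$.

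At the scheme level, for any $\psi \in E(k')$ we have $H^0(L\psi^* L_{Z/X}) = \psi^* \Omega_{Z/X}$; this is a torsion $k'\llbracket t \rrbracket$-module (since the generic fiber of $\psi$ lies in $p^{-1}(U)$, where $p$ is an isomorphism and $\Omega_{Z/X}$ vanishes) of finite length $\ord_t \psi^{-1} J$, where $J := \mathrm{Fit}_0(\Omega_{Z/X})$ is the zeroth Fitting ideal. This gives part (b) via the standard constructibility of ideal-order functions on arc spaces. For part (c), partition $E = \bigsqcup_e E_e$ by level sets of $\ord_t J$; for all $n$ sufficiently large, $\sL_n(p)$ restricted to $\theta_n(E_e)$ is a piecewise trivial $\bA^e$-fibration onto its image in $\theta_n(C)$, so $\e(\theta_n(E_e)) = \bL^e \cdot \e(\sL_n(p)(\theta_n(E_e)))$. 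Using $\dim Z = \dim \cZ$ (by birationality) and equidimensionality of $\cZ$, summing over $e$ and dividing by $\bL^{(n+1)\dim \cZ}$ rearranges to the desired identity. The main obstacle is the scheme-level fiber-dimension lemma for singular $X$: in the smooth case one argues directly with the Jacobian determinant of $p$, whereas here the Fitting ideal is the correct substitute, and verifying that its $t$-adic order governs the fibers of $\sL_n(p)$ requires careful local analysis via the conormal/cotangent sequence for $p$; representability of $\pi$ is essential since it ensures $L_{\cZ/\cX}$ has no $H^{-1}$ contribution (which would appear for stacky morphisms), matching the single-term height function in the statement.
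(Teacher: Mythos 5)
Your reduction to the case where $\cX$ is finite type and $\pi$ is proper and representable by schemes, and your arguments for parts (a) and (b), essentially match the paper's. The genuine gap is in part (c), at the point where you pass from the scheme-level identity for $Z=\cZ\times_\cX X\to X$ back to the stack-level identity for $\cZ\to\cX$. There is no established ``compatibility of $\e$ and $\mu$ with smooth covers'' of the kind you invoke: the classes $\e(\theta_n(\cC))$ and $\e(\theta_n(C))$ (and likewise $\mu_\cZ(\cE)$ and the scheme-level measure of $E$) are not related by a uniform factor, since the fibers of $\sL_n(X)\to\sL_n(\cX)$ vary from point to point and the stacky points carry automorphisms. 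A symptom of the problem is your claim that $\dim Z=\dim\cZ$ ``by birationality'': birationality of $Z\to X$ gives $\dim Z=\dim X=\dim\cZ+(\dim X-\dim\cX)$, so the normalizations $\bL^{-(n+1)\dim Z}$ (scheme level) and $\bL^{-(n+1)\dim\cZ}$ (stack level) do not agree, and ``summing over $e$ and dividing by $\bL^{(n+1)\dim\cZ}$'' does not rearrange the scheme-level identity into the asserted one.

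The paper's route avoids any such global comparison. Because $\pi$ is representable, $\sL_n(\pi)\colon\theta_n(\cE)\to\theta_n(\cC)$ is a representable morphism of stacks whose fibers are schemes, and one shows directly that for $n\gg 0$ the reduced fiber over \emph{every} $k'$-point $\varphi_n$ of $\theta_n(\cC)$ is $\bA^h_{k'}$; the fibration result of \cite{SatrianoUsatine1} then yields $\e(\theta_n(\cE))=\bL^h\e(\theta_n(\cC))$ intrinsically. The smooth cover (chosen surjective on $k'$-points) is used only pointwise, to identify $\sL_n(\pi)^{-1}(\varphi_n)$ with a fiber of $\sL_n(\rho)$ via the Cartesian square; this requires lifting $\varphi_n$ to an arc of $\cC$ over the \emph{same} field $k'$, which is \autoref{lemmaLiftJetToArcSameField} and is absent from your sketch. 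You also omit the reduction to $|\cU|$ dense in $|\cX|$ (replacing $\cX$ by the scheme-theoretic image of $\cU$ and checking $\het^{(0)}_{L_{\cZ/\cX}}=\het^{(0)}_{L_{\cZ/\cW}}$), which is what makes $\cX$ equidimensional so that the strata $\cC^{(h)}=\sL(\pi)(\cE^{(h)})$ are cylinders (\autoref{propositionImageOfCylinderIsCylinder}); your partition of $E$ by $\ord_t J$ faces the analogous issue when one tries to descend it. Finally, the scheme-level fiber statement you flag as ``the main obstacle'' is not an obstacle: it is exactly \cite[Chapter 5, Theorem 3.2.2]{ChambertLoirNicaiseSebag} and can simply be cited.
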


\begin{remark}
Because $\pi: \cZ \to \cX$ is proper, $\cZ$ has separated diagonal and thus its geometric stabilizers are schemes by \cite[Tag 0B8D]{stacks-project}. Therefore because $\pi: \cZ \to \cX$ is representable, $\cZ$ has affine geometric stabilizers, so the motivic measure $\mu_\cZ$ is well defined \cite[Definition 2.3]{SatrianoUsatine2}. Furthermore, part (\ref{theoremPartConstructibleRepresentableOfMCVF}) implies that the integral $\int_{\cE} \bL^{-\het^{(0)}_{L_{\cZ/\cX}}}\diff\mu_\cZ$ is well defined \cite[Definition 2.8]{SatrianoUsatine2}. Also note that $\pi(|\Z|)$ is quasi-compact, so $\cC$ is small and thus each $\theta_n(\cC)$ is a quasi-compact locally constructible subset of $|\sL_n(\cX)|$ by \autoref{propositionSmallCylinderHasQuasiCompactConstructibleImages}. In particular $\e(\theta_n(\cC))$ is well defined.
\end{remark}

The following result gives a representable resolution of singularities for stacks. This will be used throughout this paper.

\begin{proposition}
\label{propositionResOfSingForStacks}
Let $\cW$ be a reduced locally finite type Artin stack over $k$, and let $\cU$ be the locus of $\cW$ that is smooth over $k$. Then there exists a smooth Artin stack $\cZ$ over $k$ and a morphism $\pi: \cZ \to \cW$ that is proper and representable by schemes such that $\pi^{-1}(\cU) \to \cU$ is an isomorphism and $|\pi^{-1}(\cU)|$ is dense in $|\cZ|$. In particular, if $\cW$ is irreducible then $\cZ$ is irreducible, and if $\cW$ is finite type over $k$ then $\cZ$ is finite type over $k$.
\end{proposition}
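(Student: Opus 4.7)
The plan is to invoke functorial resolution of singularities in characteristic zero (e.g., Bierstone--Milman, Villamayor, or W\l odarczyk), which for any reduced finite type $k$-scheme $X$ produces a canonical resolution $X' \to X$ realized as an explicit finite composition of blowups $X' = X_N \to X_{N-1} \to \cdots \to X_0 = X$ along smooth closed centers $Z_n \subset X_n$ disjoint from the smooth locus of $X$, and such that the entire construction is compatible with smooth base change. This smooth-functoriality will drive the descent.

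First I would reduce to the case where $\cW$ is finite type, by covering $\cW$ with quasi-compact open substacks $\{\cW_i\}$ and gluing the resolutions $\cZ_i \to \cW_i$ using functoriality of the construction along the open immersions $\cW_i \cap \cW_j \hookrightarrow \cW_i$. For $\cW$ finite type, I would pick a smooth surjection $p\colon X \to \cW$ with $X$ a reduced finite type $k$-scheme, and let $X' \to X$ be its functorial resolution. Inductively, once $\cW_n$ has been built with $X_n \cong X \times_\cW \cW_n$, smooth-functoriality applied to the two projections $X_n \times_{\cW_n} X_n \rightrightarrows X_n$ shows that the blowup center $Z_n \subset X_n$ is pulled back from a unique smooth closed substack $\cZ_n \subset \cW_n$; blowing up $\cZ_n$ produces $\cW_{n+1}$ with $X_{n+1} \cong X \times_\cW \cW_{n+1}$. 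After $N$ steps this yields the desired $\pi\colon \cZ := \cW_N \to \cW$, satisfying $X' \cong X \times_\cW \cZ$.

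Finally I would verify the conclusions. Each blowup of a stack along a closed substack is proper and schematic (it is the relative $\mathrm{Proj}$ of the associated Rees algebra), so $\pi$ is proper and representable by schemes. Smoothness of $\cZ$ descends from smoothness of $X'$ along the smooth surjection $X' \to \cZ$. Since functorial resolution is the identity over the smooth locus, $X' \to X$ is an isomorphism over $p^{-1}(\cU)$, and by smooth descent $\pi^{-1}(\cU) \to \cU$ is an isomorphism. In characteristic zero $|\cU|$ is dense in the reduced $|\cW|$, and since $X' \to \cZ$ is smooth surjective (hence open), density of the preimage of $\cU$ in $X'$ descends to density of $|\pi^{-1}(\cU)|$ in $|\cZ|$. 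The supplementary assertions then follow: irreducibility of $\cW$ yields irreducibility of $\cU$, hence of $\pi^{-1}(\cU)$, whose closure is $\cZ$; and blowups of finite type stacks along finite type centers remain finite type. The main technical obstacle will be rigorously justifying that the blowup centers descend along the smooth cover $p$, which is precisely the content of smooth-functoriality of resolution.
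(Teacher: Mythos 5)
Your proposal is correct and follows essentially the same route as the paper: the paper's proof is a short sketch that invokes functorial resolution of singularities (functorial for smooth morphisms) together with descent along a smooth scheme cover, noting that representability by schemes is smooth-local on the target. Your write-up simply fills in the descent details (via descending the blowup centers) that the paper declares "straightforward."
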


\begin{proof}
By considering a smooth cover of $\cW$ by a scheme, this is a straightforward consequence of functorial resolution of singularities, i.e., a resolution algorithm that is functorial with respect to smooth morphisms for the case where $\cW$ is a scheme. Such algorithms exist, e.g., by \cite[Theorem 8.1.2]{AbramovichTemkinWlodarczyk} or see \cite[Theorem 1.1.6]{Wlodarczyk} for the not-necessarily-equidimensional case. Note also that a morphism being representable by schemes is a property that is smooth local on the target.
\end{proof}

Our next goal is to prove that for our cylinders of interest, images of small cylinders remain small. We do so after a preliminary technical lemma.

\begin{lemma}
\label{lemmaTruncationInTargetTruncationInSource}
Let $\cX$ be a smooth finite type Artin stack over $k$, let $Y$ be a finite type scheme over $k$, let $\pi: \cX \to Y$ be a morphism, let $\cU$ be a smooth open substack of $\cX$ such that $\cU \hookrightarrow \cX \xrightarrow{\pi} \cY$ is an open immersion, let $\cC \subset |\sL(\cX)|$ be a cylinder that is disjoint from $|\sL(\cX \setminus \cU)|$, let $\ell \in \Z$ be such that $\cC$ is the preimage along $\theta_\ell$ of a constructible subset of $|\sL_\ell(\cX)|$, suppose there exists $h \in \Z$ such that $\het^{(0)}_{L_{\cX/Y}}$ is equal to $h$ on all of $\cC$, let $\sJ$ be the $(\dim Y)$th Fitting ideal of $\Omega^1_Y$, suppose there exists $e \in \Z$ such that $\ord_{\sJ} \circ \sL(\pi)$ is equal to $e$ on all of $\cC$, and assume that $\cX$ and $Y$ are equidimensional.

Let $k'$ be a field extension of $k$, let $\varphi \in \cC(k')$, let $\alpha \in \sL(Y)(k')$, and let $n \in \Z_{\geq 0}$.

If $n \geq \max(\ell + h, e)$ and $\theta_n(\sL(\pi)(\varphi)) = \theta_n(\alpha)$, then there exists some $\varphi' \in \cC(k')$ such that $\sL(\pi)(\varphi') = \alpha$ and $\theta_{n-h}(\varphi) \cong \theta_{n-h}(\varphi')$.
\end{lemma}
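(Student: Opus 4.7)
The plan is to reduce the statement to a Denef--Loeser-type lifting lemma for morphisms of schemes with possibly singular target, by means of a smooth atlas on $\cX$.

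First, I would choose (as in the proof of \autoref{lemmaLiftJetToArcSameField}) a smooth surjective morphism $\xi\colon X\to\cX$ with $X$ an equidimensional smooth finite type $k$-scheme such that $X(k')\to\overline{\cX}(k')$ is surjective for every $k'$. The arc $\varphi\in\cC(k')$ then lifts to some $\tilde\varphi\in\sL(X)(k')$ with $\xi\circ\tilde\varphi\cong\varphi$; set $f:=\pi\circ\xi\colon X\to Y$, so $f\circ\tilde\varphi\cong\pi\circ\varphi$ and in particular $\theta_n(f\circ\tilde\varphi)=\theta_n(\alpha)$.

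Second, I would transfer the hypotheses from $\cX$ to $X$. The distinguished triangle
\[
L\xi^*L_{\cX/Y}\longrightarrow L_{X/Y}\longrightarrow L_{X/\cX}\longrightarrow L\xi^*L_{\cX/Y}[1],
\]
combined with the fact that $L_{X/\cX}=\Omega^1_{X/\cX}$ is locally free of rank $r:=\dim X-\dim\cX$ concentrated in degree $0$, yields upon pulling back along $\tilde\varphi$ and passing to the long exact cohomology sequence a short exact sequence
\[
0\to H^0(L\varphi^*L_{\cX/Y})\to H^0(L\tilde\varphi^*L_{X/Y})\to N\to 0,
\]
where $N$ is a submodule of the free $k'\llbracket t\rrbracket$-module $\tilde\varphi^*\Omega^1_{X/\cX}$ and hence torsion-free. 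Consequently the torsion submodule of $H^0(L\tilde\varphi^*L_{X/Y})$ is identified with $H^0(L\varphi^*L_{\cX/Y})$, which has length $h$; the free rank-$r$ contribution from $\Omega^1_{X/\cX}$ does not affect torsion lengths. Moreover, $\ord_\sJ(f\circ\tilde\varphi)=\ord_\sJ(\pi\circ\varphi)=e$.

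Third, I would apply a Denef--Loeser-type lifting lemma to the morphism of schemes $f\colon X\to Y$, with $X$ smooth and $Y$ possibly singular. The lifting loss at $\tilde\varphi$ is governed by the torsion length of $\tilde\varphi^*\Omega^1_{X/Y}$, which by Step 2 equals $h$; the bound $n\geq e$ handles the singularities of $Y$ via the Fitting ideal $\sJ$ of $\Omega^1_Y$. Under the hypothesis $n\geq\max(\ell+h,e)$ and $\theta_n(f\circ\tilde\varphi)=\theta_n(\alpha)$, such a lifting produces $\tilde\varphi'\in\sL(X)(k')$ with $f\circ\tilde\varphi'=\alpha$ and $\theta_{n-h}(\tilde\varphi')\cong\theta_{n-h}(\tilde\varphi)$. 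Setting $\varphi':=\xi\circ\tilde\varphi'\in\sL(\cX)(k')$ yields $\pi\circ\varphi'=\alpha$ and $\theta_{n-h}(\varphi')\cong\theta_{n-h}(\varphi)$. Finally, since $n-h\geq\ell$ and $\cC=\theta_\ell^{-1}(\theta_\ell(\cC))$, the equality $\theta_\ell(\varphi')=\theta_\ell(\varphi)\in\theta_\ell(\cC)$ forces $\varphi'\in\cC(k')$.

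The main obstacle I anticipate is Step 3: precisely matching the classical scheme-level lifting lemma for $f$ with singular target $Y$ so that the loss exponent is exactly $h$ (the torsion length of $L_{X/Y}$, not its total $H^0$-dimension $h+r$) and the bound involving $e$ (from the Fitting ideal of $\Omega^1_Y$) lines up with the hypothesis $n\geq\max(\ell+h,e)$. Once the right off-the-shelf lifting result is identified (or assembled from the authors' earlier work), the smooth-atlas reduction of Steps 1--2 and the cylinder-membership reasoning of Step 4 are routine.
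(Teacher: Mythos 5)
Your Steps 1, 2, and 4 are sound: an atlas $\xi\colon X\to\cX$ with $X(k')\to\overline{\cX}(k')$ surjective exists, the triangle $L\xi^*L_{\cX/Y}\to L_{X/Y}\to \Omega^1_{X/\cX}$ does identify $H^0(L\tilde\varphi^*L_{X/Y})_{\tor}$ with $H^0(L\varphi^*L_{\cX/Y})$ (which is all torsion, of length $h$, by hypothesis), and the cylinder-membership argument at the end is exactly the right one. The genuine gap is Step 3, which you correctly flag as the main obstacle but which is the entire content of the lemma: the result you hope to quote does not exist off the shelf in the form you need. The classical Denef--Loeser and Chambert-Loir--Nicaise--Sebag lifting lemmas for a smooth source over a singular target are stated for morphisms that are generically finite (typically proper birational modifications), whereas your $f=\pi\circ\xi\colon X\to Y$ is generically \emph{smooth of positive relative dimension} $r=\dim X-\dim\cU$ over $U$; that case would have to be reproved, and doing so is precisely the work the lemma encodes.

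What the paper actually does is bypass the atlas entirely and apply the authors' stack-level one-step approximation result [SatrianoUsatine2, Proposition 5.2] directly to $\varphi\in\cC(k')$. That proposition only improves the order of agreement of $\sL(\pi)(\varphi)$ with $\alpha$ by one, at the cost of modifying $\varphi$ in degrees $\geq n-h+1$; the paper iterates it to produce a sequence $\varphi^{(i)}$ with $\theta_{n-h+i}(\varphi^{(i+1)})\cong\theta_{n-h+i}(\varphi^{(i)})$ and $\theta_{n+i}(\sL(\pi)(\varphi^{(i)}))=\theta_{n+i}(\alpha)$, and then extracts the exact lift $\varphi'$ as the limit arc of this compatible system. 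So even granting the one-step lemma (which is what is actually available from the authors' earlier work, after checking that its injectivity and irreducibility hypotheses can be dropped or weakened to equidimensionality), your proposal is missing the iteration-and-limit argument that upgrades agreement modulo $t^{n+i+1}$ for every $i$ to the exact equality $\sL(\pi)(\varphi')=\alpha$. Supplying that iteration essentially recovers the paper's proof, and renders your Steps 1--2 unnecessary, since Proposition 5.2 is already stated for stacks.
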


\begin{proof}
Set $\varphi^{(0)} = \varphi$. We will find a sequence $\{\varphi^{(i)}\}_{i \in \Z_{\geq 0}} \subset \cC(k')$ such that for all $i \in \Z_{\geq 0}$, we have $\theta_{n-h+i}(\varphi^{(i+1)}) \cong \theta_{n-h+i}(\varphi^{(i)})$ and $\theta_{n+i}(\sL(\pi)(\varphi^{(i)})) = \theta_{n+i}(\alpha)$. We will do so inductively on $i$, so assume we already have $\varphi^{(0)}, \dots, \varphi^{(i)}$. We know $\varphi^{(i)} \in \cC(k')$, $n + i \geq \max(\ell + h, e)$, and $\theta_{n+i}(\sL(\pi)(\varphi^{(i)})) = \theta_{n+i}(\alpha)$, so \cite[Proposition 5.2]{SatrianoUsatine2} implies that there exists $\varphi^{(i+1)} \in \sL(\cX)(k')$ such that $\theta_{n-h+i}(\varphi^{(i+1)}) \cong \theta_{n-h+i}(\varphi^{(i)})$ and $\theta_{n+i+1}(\sL(\pi)(\varphi^{(i+1)})) = \theta_{n+i+1}(\alpha)$ (for more details see the paragraph following \cite[Proposition 5.2]{SatrianoUsatine2}, and note that the proof of \cite[Proposition 5.2]{SatrianoUsatine2} never uses the injectivity hypothesis on $\cC(k')$ and still works if the hypothesis that $\cX$ and $Y$ are irreducible is replaced with the assumption that they are equidimensional). Because $n-h+i \geq \ell$, we also have $\varphi^{(i+1)} \in \cC(k')$, so we have the desired sequence.

Because $\theta_{n-h+i}(\varphi^{(i+1)}) \cong \theta_{n-h+i}(\varphi^{(i)})$ for all $i \in \Z_{\geq 0}$, there exists some $\varphi' \in \sL(\cX)(k')$ such that $\theta_{n-h+i}(\varphi') \cong \theta_{n-h+i}(\varphi^{(i+1)})$ for all $i \in \Z_{\geq 0}$. Thus for all $i \in \Z_{\geq 0}$,
\begin{align*}
	\theta_{n-h+i}(\sL(\pi)(\varphi')) &= \sL_{n-h+i}(\pi)(\theta_{n-h+i}(\varphi')) \\
	&= \sL_{n-h+i}(\pi)(\theta_{n-h+i}(\varphi^{(i+1)})) \\
	&= \theta_{n-h+i}(\sL(\pi)(\varphi^{(i+1)})) \\
	&= \theta_{n-h+i}(\alpha),
\end{align*}
so $\sL(\pi)(\varphi') = \alpha$. Also
\[
	\theta_{n-h}(\varphi') \cong \theta_{n-h}(\varphi^{(1)}) \cong \theta_{n-h}(\varphi^{(0)}) = \theta_{n-h}(\varphi).
\]
Because $n - h \geq \ell$, this also implies $\varphi' \in \cC(k')$, and we are done.
\end{proof}

\begin{proposition}
\label{propositionImageOfCylinderIsCylinder}
Let $\cX$ and $\cY$ be locally finite type Artin stacks over $k$, let $\pi: \cX \to \cY$ be a morphism, let $\cU$ be a smooth open substack of $\cX$ such that $\cU \hookrightarrow \cX \xrightarrow{\pi} \cY$ is an open immersion, let $\cC \subset |\sL(\cX)|$ be a small cylinder that is disjoint from $|\sL(\cX \setminus \cU)|$, and assume that $\cY$ is equidimensional. Then $\sL(\pi)(\cC) \subset |\sL(\cY)|$ is a small cylinder.
\end{proposition}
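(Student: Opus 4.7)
The plan is to reduce, via a smooth cover on the target and a representable resolution on the source, to a setting where Lemma \ref{lemmaTruncationInTargetTruncationInSource} applies, and then to stratify and run a Chevalley-style argument. Since $\cC$ is small, $\theta_0(\cC)$ lies in a quasi-compact open $\cX_0 \subseteq \cX$; replacing $\cX$ by $\cX_0$ makes $\cX$ finite type. Then $\pi(|\cX_0|)$ is quasi-compact and so lies in a quasi-compact open $\cY_0 \subseteq \cY$, which inherits equidimensionality from $\cY$. Next, take a smooth surjective cover $\eta\colon W \to \cY_0$ with $W$ an equidimensional finite-type scheme, form $\cX' := \cX_0 \times_{\cY_0} W$ with projections $\rho\colon \cX' \to W$ and $\xi\colon \cX' \to \cX_0$, and set $\cC' := \sL(\xi)^{-1}(\cC)$ and $\cU' := \cU \times_{\cY_0} W$. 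Then $\cC'$ is a small cylinder disjoint from $|\sL(\cX' \setminus \cU')|$, with $\cU' \hookrightarrow \cX' \to W$ still an open immersion, and Lemmas \ref{lemmaPreimageImageImagePreimage} and \ref{lemmaCylinderCanBeCheckedInSmoothCover} reduce the problem to showing that $\sL(\rho)(\cC')$ is a cylinder in $|\sL(W)|$.

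Now I resolve the source. By Lemma \ref{lemmaNotArcOfBoundaryImpliesTruncationInClosure}, $\cC'$ lies in $|\sL(\cV)|$ for $\cV$ the reduced closure of $\cU'$ in $\cX'$, so replacing $\cX'$ by $\cV$ makes it reduced. Proposition \ref{propositionResOfSingForStacks} then yields $\tau\colon \cZ \to \cX'$ with $\cZ$ smooth, $\tau$ proper and representable by schemes, and $\tau^{-1}(\cU') \to \cU'$ an isomorphism. Set $\cC'' := \sL(\tau)^{-1}(\cC')$, a cylinder in $|\sL(\cZ)|$. The valuative criterion of properness applied to $\tau$ --- lifting the generic point of any $\varphi \in \cC'$ through the isomorphism $\tau^{-1}(\cU') \to \cU'$ --- gives $\sL(\tau)(\cC'') = \cC'$, so $\sL(\rho)(\cC') = \sL(\rho \circ \tau)(\cC'')$. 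After a finite decomposition of $\cC''$ according to which irreducible component of $\cZ$ it meets (only finitely many, as $\theta_0(\cC'')$ is quasi-compact) we may further assume $\cZ$ is equidimensional.

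Finally, I stratify $\cC''$ by the level sets of $\het^{(0)}_{L_{\cZ/W}}$ and of $\ord_\sJ \circ \sL(\rho \circ \tau)$, where $\sJ$ is the $(\dim W)$-th Fitting ideal of $\Omega^1_W$. Since $\cC''$ is disjoint from $|\sL(\cZ \setminus \tau^{-1}(\cU'))|$, both functions are integer-valued on $\cC''$; their level sets are cylinders; and by smallness together with Proposition \ref{propositionSmallCylinderFiniteSubcover} only finitely many values $(h,e)$ occur. On each stratum $\cC''_{h,e}$, choose $n$ simultaneously exceeding $\max(\ell + h, e)$ and the threshold $N_{\cC''_{h,e}}$ from Lemma \ref{lemmaLiftJetToArcSameField}; I claim
\[
\sL(\rho \circ \tau)(\cC''_{h,e}) \;=\; \theta_n^{-1}\bigl(\sL_n(\rho \circ \tau)(\theta_n(\cC''_{h,e}))\bigr),
\]
whose right-hand side is locally constructible by Chevalley applied to the quasi-compact constructible set $\theta_n(\cC''_{h,e})$. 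One containment is formal; for the other, given $\alpha \in \sL(W)(k')$ with $\theta_n(\alpha) = \sL_n(\rho \circ \tau)(\varphi_n)$ for some $\varphi_n \in \theta_n(\cC''_{h,e})(k')$, Lemma \ref{lemmaLiftJetToArcSameField} produces $\varphi \in \cC''_{h,e}(k')$ truncating to $\varphi_n$, and then Lemma \ref{lemmaTruncationInTargetTruncationInSource} promotes this to $\varphi' \in \cC''_{h,e}(k')$ with $\sL(\rho \circ \tau)(\varphi') = \alpha$. A finite union of cylinders is a cylinder, so $\sL(\rho)(\cC')$ --- and hence $\sL(\pi)(\cC)$ --- is a cylinder, and smallness follows from quasi-compactness of $\pi(\theta_0(\cC))$. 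The main obstacle is the bookkeeping needed to preserve every hypothesis (smallness, equidimensionality, constancy of the height and order functions, and the separated-diagonal and affine-stabilizer conditions demanded by Lemma \ref{lemmaLiftJetToArcSameField}) through the two reductions and the stratifications, so that a single choice of large $n$ makes Lemmas \ref{lemmaLiftJetToArcSameField} and \ref{lemmaTruncationInTargetTruncationInSource} fire simultaneously on each stratum.
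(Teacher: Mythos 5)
Your proposal follows the paper's proof in all essential respects: reduce to the finite type case via smallness, pass to an equidimensional scheme cover of the target using \autoref{lemmaPreimageImageImagePreimage} and \autoref{lemmaCylinderCanBeCheckedInSmoothCover}, replace the source by a representable resolution of the reduced closure of $\cU$ via \autoref{propositionResOfSingForStacks} (with surjectivity on arcs from the valuative criterion), stratify by the finitely many values of $\het^{(0)}$ and $\ord_{\sJ}$, and identify the image with $\theta_n^{-1}(\sL_n(\pi)(\theta_n(\cC)))$ for $n \geq \max(\ell+h,e)$ using \autoref{lemmaTruncationInTargetTruncationInSource} and Chevalley. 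This is exactly the paper's route.

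The one place your argument as written would fail is the appeal to \autoref{lemmaLiftJetToArcSameField}. That lemma requires the source stack to have separated diagonal and affine geometric stabilizers, but \autoref{propositionImageOfCylinderIsCylinder} imposes neither condition on $\cX$, and your $\cZ$ does not acquire them for free: its stabilizers embed into those of $\cX$, so if $\cX$ has non-affine stabilizers or non-separated diagonal, so may $\cZ$. You flag these as hypotheses to be ``preserved through the reductions,'' but they cannot be preserved because they were never assumed. Fortunately the lemma is unnecessary here: since you are only identifying $\sL(\rho\circ\tau)(\cC''_{h,e})$ as a subset of the topological space $|\sL(W)|$, membership of a point $\alpha$ only requires a preimage after \emph{some} field extension. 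So, as in the paper, given $\alpha$ with $\theta_n(\alpha) \in \sL_n(\rho\circ\tau)(\theta_n(\cC''_{h,e}))$, you may extend $k'$ until an actual arc $\varphi \in \cC''_{h,e}(k')$ with $\theta_n(\sL(\rho\circ\tau)(\varphi)) = \theta_n(\alpha)$ exists, and then apply \autoref{lemmaTruncationInTargetTruncationInSource} over that extension. With that substitution your proof is correct and coincides with the paper's.
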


\begin{proof}
By replacing $\cX$ with a quasi-compact open substack $\cX_1$ that contains $\theta_0(\cC)$, replacing $\cU$ with $\cU \cap \cX_1$, and replacing $\cY$ with a quasi-compact open substack $\cY_1$ such that $\pi(|\cX_1|) \subset |\cY_1|$, we may assume that $\cX$ and $\cY$ are finite type over $k$.

Let $\eta: W \to \cY$ be a smooth cover by a finite type $k$-scheme $W$. Because $\cY$ is equidimensional and $W \to \cY$ is smooth, every connected component of $W$ is equidimensional, e.g., by \cite[Lemmas 0DRP and 0DRQ]{stacks-project}. Therefore, possibly replacing some of these connected components with their products with an affine space, we may assume that $W$ is equidimensional. Let $\rho: \cZ \to W$ be the base change of $\pi$ along $W \xrightarrow{\eta} \cY$, and let $\cE$ be the preimage of $\cC$ in $|\sL(\cZ)|$. By \autoref{lemmaPreimageImageImagePreimage},
\[
	\sL(\eta)^{-1}(\sL(\pi)(\cC)) = \sL(\rho)(\cE),
\]
so by \autoref{lemmaCylinderCanBeCheckedInSmoothCover}, we may replace $\pi, \cX, \cY, \cU, \cC$ with $\rho, \cZ, W, \cU \times_\cY W, \cE$ and thus assume that $\cY$ is a scheme $Y$.

By replacing $\pi: \cX \to Y$ with its reduction, we may assume that $\cX$ and $Y$ are reduced. Then by \autoref{propositionResOfSingForStacks}, there exists a smooth finite type Artin stack $\cX_2$ over $k$ and a morphism $\xi: \cX_2 \to \cX$ that is proper and representable by schemes such that $\xi^{-1}(\cU) \to \cU$ is an isomorphism. By replacing $\cX_2$ with the union of its connected components that intersect $\xi^{-1}(\cU)$, we may assume that $\cX_2$ is equidimensional. Because $\xi$ is proper and representable by schemes and an isomorphism above $\cU$, the induced map 
\[
	|\sL(\cX_2)| \setminus |\sL(\cX_2 \setminus \xi^{-1}(\cU))| \to |\sL(\cX)| \setminus |\sL(\cX \setminus \cU)|
\]
is surjective. In particular $\sL(\xi)(\sL(\xi^{-1})(\cC)) = \cC$. Therefore by replacing $\pi, \cX, \cU, \cC$ with $\pi \circ \xi, \cX_2, \xi^{-1}(\cU), \sL(\xi)^{-1}(\cC)$, we may assume that $\cX$ is smooth and equidimensional.

Let $\sJ$ be the $(\dim Y)$th Fitting ideal of $\Omega^1_Y$. Because $\cU \hookrightarrow \cX \xrightarrow{\pi} \cY$ is an open immersion and $\cU$ is smooth, we have that $\ord_{\sJ} \circ \sL(\pi)$ and $\het^{(0)}_{L_{\cX/Y}}$ are constructible functions on $\cC$ \cite[Remarks 2.6 and 3.5]{SatrianoUsatine2}. These constructible functions take only finitely many values on $\cC$ \cite[Proposition 2.2]{SatrianoUsatine2}, so by separately considering each stratum of $\cC$ where $\ord_{\sJ} \circ \sL(\pi)$ and $\het^{(0)}_{L_{\cX/Y}}$ are both constant, we may assume there exist $h, e \in \Z$ such that $\het^{(0)}_{L_{\cX/Y}}$ is equal to $h$ on all of $\cC$ and $\ord_{\sJ} \circ \sL(\pi)$ is equal to $e$ on all of $\cC$.

Because $\cC$ is a cylinder, there exists $\ell \in \Z_{\geq 0}$ and a constructible subset $\cC_\ell \subset |\sL_\ell(\cX)|$ such that $\cC = \theta_\ell^{-1}(\cC_\ell)$. Set $\cC_{\ell + h+e} = \theta_{\ell + h+e}(\cC) \subset |\sL_{\ell + h+e}(\cC)|$. By Chevalley's Theorem for Artin stacks \cite[Theorem 5.1]{HallRydh17}, $\sL_{\ell + h+e}(\pi)(\cC_{\ell + h+e})$ is a constructible subset of $\sL_{\ell+h+e}(Y)$. We are thus done if we can show
\[
	\sL(\pi)(\cC) = \theta_{\ell+h+e}^{-1}(\sL_{\ell + h+e}(\pi)(\cC_{\ell + h+e})).
\]
Clearly $\sL(\pi)(\cC) \subset \theta_{\ell+h+e}^{-1}(\sL_{\ell + h+e}(\pi)(\cC_{\ell + h+e}))$. 

To show the other inclusion, let $\alpha \in (\theta_{\ell+h+e}^{-1}(\sL_{\ell + h+e}(\pi)(\cC_{\ell + h+e})))(k')$ for some field extension $k'$ of $k$. Possibly after extending $k'$, we may assume there exists some $\varphi \in \cC(k')$ such that $\theta_{\ell+h+e}(\sL(\pi)(\varphi)) = \theta_{\ell+h+e}(\alpha)$. Then by \autoref{lemmaTruncationInTargetTruncationInSource} there exists some $\varphi' \in \cC(k')$ such that $\sL(\pi)(\varphi') = \alpha$, so $\alpha \in (\sL(\pi)(\cC))(k')$, completing our proof.
\end{proof}

We turn now to the main theorem of this section.

\begin{proof}[{Proof of Theorem \ref{theoremRepresentableChangeOfVariables}}]
By assumption $\pi$ is the composition of a proper morphism $\cZ \to \cX_1$ that is representable by schemes and an open immersion $\cX_1 \to \cX$. Let $\cX_2$ be a quasi-compact open substack of $\cX$ such that $\pi(|\cZ|) \subset |\cX_2|$. Replacing $\cX$ with $\cX_1 \cap \cX_2$, we may assume that $\cX$ is finite type over $k$ and that $\pi$ is a proper morphism that is representable by schemes. Because the theorem is obvious when $\cU = \emptyset$, we may also assume that $\cU$ is nonempty. In particular, $|\pi^{-1}(\cU)|$ is dense in $|\cZ|$.

Let $\cW$ be the scheme theoretic image of the inclusion $\cU \to \cX$. Because $|\pi^{-1}(\cU)|$ is dense in $|\cZ|$, we have $|\pi(\cZ)| \subset |\cW|$. Then because $\cZ$ is reduced, $\pi$ factors as $\cZ \xrightarrow{\rho} \cW \hookrightarrow \cX$. We will show that we can replace $\cX, \pi$ with $\cW, \rho$ and therefore assume that $|\cU|$ is dense in $|\cX|$. First, note that because $\pi$ is proper and representable by schemes and $\cW \hookrightarrow \cX$ is a closed immersion, the morphism $\rho$ is proper and representable by schemes. By \autoref{lemmaNotArcOfBoundaryImpliesTruncationInClosure}, we have $\cC \subset |\sL(\cW)|$. In particular, each $\theta_n(\cC) \subset |\sL_n(\cW)| \subset |\sL_n(\cX)|$, so each class $\e(\theta_n(\cC))$ does not depend on whether we consider $\theta_n(\cC)$ as a constructible subset of $|\sL_n(\cW)|$ or $|\sL_n(\cX)|$. Thus to show that we may replace $\cX, \pi$ with $\cW, \rho$, we just need to show that
\[
	\het^{(0)}_{L_{\cZ/\cX}} = \het^{(0)}_{L_{\cZ/\cW}}.
\]
To do so, consider an arc $\varphi: \Spec(k'\llbracket t \rrbracket) \to \cZ$. The exact triangle
\[
	L\rho^* L_{\cW/\cX} \to L_{\cZ / \cX} \to L_{\cZ / \cW}
\]
induces an exact triangle
\[
	L\varphi^*L\rho^* L_{\cW/\cX} \to L\varphi^*L_{\cZ / \cX} \to L\varphi^*L_{\cZ / \cW},
\]
so we obtain the exact sequence
\[
	H^0(L\varphi^*L\rho^* L_{\cW/\cX}) \to H^0(L\varphi^*L_{\cZ / \cX}) \to H^0(L\varphi^*L_{\cZ / \cW}) \to H^1(L\varphi^*L\rho^* L_{\cW/\cX}).
\]
Because $\cW \to \cX$ is a closed immersion, 
\[
	H^0(L\varphi^*L\rho^* L_{\cW/\cX}) = H^1(L\varphi^*L\rho^* L_{\cW/\cX}) = 0,
\]
and therefore
\[
	\het^{(0)}_{L_{\cZ/\cX}}(\varphi) = \dim_{k'} H^0(L\varphi^*L_{\cZ / \cX}) = \dim_{k'} H^0(L\varphi^*L_{\cZ / \cW}) = \het^{(0)}_{L_{\cZ/\cW}}(\varphi).
\]
Thus we may replace $\cX, \pi$ with $\cW, \rho$, so we may assume that $|\cU|$ is dense in $|\cX|$. 

We will now prove each part of the theorem.

\begin{enumerate}[(a)]

\item Because $\cC$ is a cylinder and $\cX$ is finite type over $k$, there exists some $n \in \Z_{\geq 0}$ and a constructible subset $\cC_n \subset |\sL_n(\cX)|$ such that $\cC = \theta_n^{-1}(\cC_n)$. Therefore $\cE = \theta_n^{-1}(\sL_n(\pi)^{-1}(\cC_n))$ is a cylinder. Because $\cC$ is disjoint from $|\sL(\cX \setminus \cU)|$, the valuative criterion for proper morphisms that are representable by schemes implies that $\overline{\cE}(k') \to \overline{\cC}(k')$ is bijective for all field extensions $k'$ of $k$.

\item Noting that $\pi$ being representable implies $\het^{(1)}_{L_{\cZ/\cX}} = 0$, we get that the function $\het^{(0)}_{L_{\cZ/\cX}}$ is integer valued and constructible on $\cE$ by \cite[Remark 3.5]{SatrianoUsatine2}.

\item For each $h \in \Z_{\geq 0}$, let $\cE^{(h)} = \cE \cap (\het^{(0)}_{L_{\cZ/\cX}})^{-1}(h)$ and $\cC^{(h)} = \sL(\pi)(\cE^{(h)})$. By part (\ref{theoremPartConstructibleRepresentableOfMCVF}), $\cE = \bigsqcup_{h \in \Z_{\geq 0}} \cE^{(h)}$, so by part (\ref{theoremPartCylinderBijection}), $\cC = \bigsqcup_{h \in \Z_{\geq 0}} \cC^{(h)}$ and $\cE^{(h)} = \sL(\pi)^{-1}(\cC^{(h)})$ for all $h$. By parts (\ref{theoremPartCylinderBijection}) and (\ref{theoremPartConstructibleRepresentableOfMCVF}) each $\cE^{(h)}$ is a cylinder. Because $\cZ$ is irreducible and $|\cU|$ is dense in $|\cX|$, we have $\cX$ is irreducible and therefore equidimensional \cite[Lemma 0DRX]{stacks-project}. Thus by \autoref{propositionImageOfCylinderIsCylinder}, each $\cC^{(h)}$ is a cylinder. By \cite[Proposition 2.2]{SatrianoUsatine2}, $\cE^{(h)}$ is empty for all but finitely many $h$, so
\[
	\int_{\cE} \bL^{-\het^{(0)}_{L_{\cZ/\cX}}}\diff\mu_\cZ = \sum_{h \in \Z_{\geq 0}} \int_{\cE^{(h)}} \bL^{-\het^{(0)}_{L_{\cZ/\cX}}}\diff\mu_\cZ,
\]
with the right-hand-side being a finite sum. Because $\cC^{(h)}$ is empty for all but finitely many $h$, there exists some $N_0 \in \Z_{\geq 0}$ such that for all $h$, we have $\cC^{(h)}$ is the preimage along $\theta_{N_0}$ of a constructible subset of $|\sL_{N_0}(\cX)|$. Then for all $n \geq N_0$,
\[
	\theta_n(\cC) = \bigsqcup_{h \in \Z_{\geq 0}} \theta_n(\cC^{(h)}),
\]
so
\[
	\e(\theta_n(\cC))\bL^{-(n+1)\dim\cZ} = \sum_{h \in \Z_{\geq 0}} \e(\theta_n(\cC^{(h)}))\bL^{-(n+1)\dim\cZ},
\]
with the right-hand-side being a finite sum. Therefore by considering each $\cC^{(h)}, \cE^{(h)}$ in place of $\cC, \cE$, we may assume that there exists some $h \in \Z_{\geq 0}$ such that $\het^{(0)}_{L_{\cZ/\cX}}$ is equal to $h$ on all of $\cE$.

Because $\cZ$ is a smooth and irreducible and $\cE$ is a cylinder, there exists some $N_1 \in \Z_{\geq 0}$ such that for all $n \geq N_1$,
\[
	\int_{\cE} \bL^{-\het^{(0)}_{L_{\cZ/\cX}}}\diff\mu_\cZ = \bL^{-h} \mu_\cZ(\cE) = \bL^{-h}\e(\theta_n(\cE))\bL^{-(n+1)\dim\cZ}.
\]
Thus it is sufficient to show that there exists some $N_2 \in \Z_{\geq 0}$ such that for all $n \geq N_2$,
\[
	\e(\theta_n(\cE)) = \bL^h \e(\theta_n(\cC)).
\]
Using that $\cZ$ is smooth and that $\cC$ is a cylinder, it is straightforward to check that there exists some $N_3 \in \Z_{\geq 0}$ such that for all $n \geq N_3$,
\[
	\sL_n(\pi)^{-1}(\theta_n(\cC)) \subset \theta_n(\cE).
\]
Therefore by \cite{SatrianoUsatine1} it is sufficient to show that there exists some $N_4 \in \Z_{\geq 0}$ such that for any $n \geq N_4$, any field extension $k'$ of $k$, and any $\varphi_n \in (\theta_n(\cC))(k')$,
\[
	(\sL_n(\pi)^{-1}(\varphi_n))_\red \cong \bA_{k'}^h.
\]
Let $\xi: X \to \cX$ be a smooth cover by a finite type $k$-scheme $X$ such that for all field extensions $k'$ of $k$, the map $X(k') \to \overline{\cX}(k')$ is surjective. Such a smooth cover exists, e.g, by \cite[Theorem 1.2(b)]{Deshmukh}, and by \cite[Lemmas 0DRP and 0DRQ]{stacks-project} we may assume that $X$ is equidimensional. Let $\rho: Z \to X$ be the base change of $\pi: \cZ \to \cX$ along $\xi: X \to \cX$. Because $\pi$ is representable by schemes, $Z$ is a scheme. Let $U = \xi^{-1}(\cU)$. Because $Z \to |\cZ|$ is an open map and $|\pi^{-1}(\cU)|$ is dense in $|\cZ|$, we have that $\rho^{-1}(U)$ is dense in $Z$. Then because $\rho^{-1}(U) \to U$ is an isomorphism, $Z$ is equidimensional. Let $C = \sL(\xi)^{-1}(\cC) \subset \sL(X)$, and let $E = \sL(\rho)^{-1}(C) \subset \sL(Z)$. Clearly $C$ and $E$ are cylinders, and by the valuative criterion applied to the proper map $\rho:Z \to X$, we have $E(k') \to C(k')$ is bijective for all field extensions $k'$ of $k$. Furthermore, because $X \to \cX$ is flat, $L_{Z/X}$ is isomorphic to the derived pullback of $L_{\cZ/\cX}$ along $Z \to \cZ$. In particular, $\het^{(0)}_{L_{Z/X}}$ is equal to $h$ on all of $E$. By \cite[Chapter 5 Theorem 3.2.2]{ChambertLoirNicaiseSebag}, noting that $\het^{(0)}_{L_{Z/X}}$ coincides with the function denoted $\ordjac_{\rho}$ in \cite{ChambertLoirNicaiseSebag}, there exists some $N_5 \in \Z_{\geq 0}$ such that for any $n \geq N_5$, any field extension $k'$ of $k$, and any $\psi_n \in (\theta_n(C))(k')$,
\[
	(\sL_n(\rho)^{-1}(\psi_n))_\red \cong \bA_{k'}^h.
\]
Let $N_\cC$ be as in the conclusion of \autoref{lemmaLiftJetToArcSameField}, and set $N_4 = \max(N_5, N_{\cC})$.

Now let $n \geq N_4$, let $k'$ be a field extension of $k$, and let $\varphi_n \in (\theta_n(\cC))(k')$. By our choice of $N_\cC$, there exists $\varphi \in \cC(k')$ such that $\theta_n(\varphi) \cong \varphi_n$. Because $X(k') \to \overline{\cX}(k')$ is surjective and $X \to \cX$ is smooth, there exists some $\psi \in C(k')$ such that $\sL(\xi)(\psi) \cong \varphi$. Then
\[
	(\sL_n(\pi)^{-1}(\varphi_n))_\red \cong (\sL_n(\rho)^{-1}(\theta_n(\psi)))_\red \cong \bA_{k'}^h,
\]
completing the proof.\qedhere
\end{enumerate}
\end{proof}

\section{Stacks and the Grothendieck group of varieties}

We prove $\Vert \e(\cY) \Vert = \exp(\dim\cY)$ for our stacks of interest. We make use of this in the following section to prove properties of our measure $\mu_{\cX,d}$.

\begin{lemma}
\label{lemmaDimensionAndStratification}
Let $\cY$ be a finite type Artin stack over $k$, and let $\{\cY_i\}_{i \in I}$ be a finite collection of locally closed substacks of $\cY$ such that $|\cY|=\coprod_{i\in I}|\cY_i|$. Then
\[
	\dim\cY = \max_{i \in I} \dim \cY_i.
\]
\end{lemma}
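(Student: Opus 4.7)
The plan is to reduce to the classical statement for schemes via a smooth cover, with the only subtlety being that a smooth cover of $\cY$ need not have globally constant relative dimension.

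First, I would choose a smooth surjection $p\colon U \to \cY$ from a finite type $k$-scheme $U$, which exists since $\cY$ is finite type. Smooth morphisms have locally constant relative dimension, so by taking a decomposition into open-closed pieces of the (finitely many) connected components of $U$, I may write $U = \bigsqcup_{\alpha} U^{(\alpha)}$ with $p|_{U^{(\alpha)}}$ smooth of constant relative dimension $r_\alpha$. Then by the usual definition of dimension for Artin stacks via smooth covers,
\[
	\dim \cY \;=\; \max_{\alpha}\bigl(\dim U^{(\alpha)} - r_\alpha\bigr).
\]

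Second, I would pull back the stratification. Set $U_i^{(\alpha)} := U^{(\alpha)} \times_{\cY} \cY_i$; since $\cY_i \hookrightarrow \cY$ is locally closed, so is $U_i^{(\alpha)} \hookrightarrow U^{(\alpha)}$, and the hypothesis $|\cY| = \coprod_i |\cY_i|$ implies $|U^{(\alpha)}| = \coprod_i |U_i^{(\alpha)}|$. Base change preserves smoothness and relative dimension, so the induced smooth surjection $\bigsqcup_\alpha U_i^{(\alpha)} \to \cY_i$ has constant relative dimension $r_\alpha$ on each piece, giving
\[
	\dim \cY_i \;=\; \max_{\alpha}\bigl(\dim U_i^{(\alpha)} - r_\alpha\bigr).
\]

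Third, I would invoke the scheme case: for any finite type $k$-scheme $V$ stratified by finitely many locally closed subschemes $V = \coprod_j V_j$, one has $\dim V = \max_j \dim V_j$. The proof is short: the inequality $\dim V_j \leq \dim V$ is immediate from local closedness, and for the other direction each irreducible component $W$ of $V$ has its generic point lying in a unique $V_j$, whence $W \cap V_j$ is dense open in $W$ and therefore $\dim W = \dim(W \cap V_j) \leq \dim V_j$. Applying this to each $U^{(\alpha)}$ gives $\dim U^{(\alpha)} = \max_i \dim U_i^{(\alpha)}$.

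Finally, combining the three displays and swapping the order of the two maxima yields
\[
	\dim \cY \;=\; \max_{\alpha}\max_{i}\bigl(\dim U_i^{(\alpha)} - r_\alpha\bigr) \;=\; \max_{i}\max_{\alpha}\bigl(\dim U_i^{(\alpha)} - r_\alpha\bigr) \;=\; \max_{i} \dim \cY_i,
\]
which completes the proof. The main (mild) obstacle is handling a cover of varying relative dimension, which is dispensed with by the initial decomposition into pieces of constant relative dimension.
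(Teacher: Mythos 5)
Your proof is correct and takes essentially the same route as the paper's: both pull the stratification back along a smooth cover, use that the relative dimension of the cover is unchanged by base change to a stratum, and reduce to the scheme-level fact that a finite locally closed stratification computes the dimension because each irreducible component meets some stratum in a dense open. The only organizational difference is that you decompose the cover into pieces of constant relative dimension and work with global dimensions, whereas the paper compares pointwise dimensions $\dim_{y}\cY_i\leq\dim_{y}\cY$ directly via the groupoid presentation; both handle the non-equidimensionality of the cover correctly.
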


\begin{proof}
Let $Y\to\cY$ be a smooth cover by a scheme, let $R=Y\times_\cY Y$, let $s,t\colon R\to Y$ be the two projection maps, and let $e\colon Y\to R$ be the canonical section. Let $Y_i:=Y\times_\cY \cY_i$, let $R_i:=R\times_\cY\cY_i$, and let $s_i,t_i\colon R_i\to Y_i$ and $e_i\colon Y_i\to R_i$ be the induced maps.

Fix $i$, let $y'_i\in Y_i$, and let $y_i\in|\cY|$ be the image of $y'_i$. Since $Y_i\subset Y$ is locally closed, we have $\dim_{y'_i} Y_i\leq \dim_{y'_i} Y$ with equality if $Y_i$ is open in $Y$, i.e., if $\cY_i\subset\cY$ is open. Also note that $$(R_i)_{y'_i}:=R_i\times_{s_i,Y_i,y'_i} \Spec k(y'_i)=R\times_{s,Y,y'_i} \Spec k(y'_i)=:R_{y'_i}.$$ Then by definition,
\begin{align*}
	\dim_{y_i} \cY_i &= \dim_{y'_i} Y_i - \dim_{e(y'_i)} (R_i)_{y'_i}\\
	& \leq \dim_{y'_i} Y - \dim_{e(y'_i)} R_{y'_i} = \dim_{y_i} \cY
\end{align*}
with equality if $\cY_i$ is open in $\cY$. The result follows since $\dim\cY=\sup_{y\in|\cY|}\{\dim_y\cY\}$, $\dim\cY_i=\sup_{y\in|\cY_i|}\{\dim_y\cY_i\}$, and for every irreducible component $\cZ$ of $\cY$, there exists $i\in I$ with $\cZ\cap\cY_i$ open in $\cZ$.
\end{proof}

Let $\EP: \widehat\sM_k \to \Q\llparenthesis T^{-1} \rrparenthesis$ be the unique continuous ring homomorphism that takes each class $\e(Y)$ of a finite type $k$-scheme $Y$ to its Euler--Poincar\'{e} polynomial, and let $\val: \Q\llparenthesis T^{-1} \rrparenthesis \to \Z \cup \{\infty\}$ denote the valuation given by taking the order of vanishing of $T^{-1}$.

\begin{lemma}
\label{lemmaEulerPoincarePolynomialAndDimension}
Let $\cY$ be a finite type Artin stack over $k$ with affine geometric stabilizers. Then
\[
	\val(\EP(\e(\cY))) = -\dim\cY,
\]
and the coefficient of $T^{\dim\cY}$ in $\EP(\e(\cY))$ is positive.
\end{lemma}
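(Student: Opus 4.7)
The plan is to reduce to the case where $\cY$ is a scheme. By a stratification theorem for Artin stacks with affine stabilizers (due to Kresch in the Deligne--Mumford setting and extended more generally; this is essentially how $\e(\cY) \in \widehat{\sM}_k$ is defined in the paper's setup for such $\cY$), there is a finite decomposition $|\cY| = \coprod_{i \in I} |\cY_i|$ into locally closed substacks $\cY_i \cong [V_i/\GL_{n_i}]$ with $V_i$ a finite type $k$-scheme. Applying \autoref{lemmaDimensionAndStratification} together with the standard dimension computation using the smooth cover $V_i \to [V_i/\GL_{n_i}]$ gives $\dim \cY = \max_i \dim \cY_i$ and $\dim \cY_i = \dim V_i - n_i^2$. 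Combining the additivity of $\e$ in $\widehat{\sM}_k$ with the formula $\e([V/\GL_n]) = \e(V)/\e(\GL_n)$, valid since $\GL_n$ is a special group, and applying the continuous ring homomorphism $\EP$ yields
\[
    \EP(\e(\cY)) \ =\ \sum_{i \in I} \frac{\EP(\e(V_i))}{\EP(\e(\GL_{n_i}))}.
\]

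The key inputs are then the scheme case and the computation for $\GL_n$. For the scheme case I would invoke the classical fact that for a finite type $k$-scheme $V$ of dimension $d$, the polynomial $\EP(\e(V))$ has valuation $-d$ and positive leading coefficient; this follows by stratifying $V$ into smooth irreducible locally closed pieces and noting that each irreducible component of $V$ of maximal dimension contributes a positive coefficient at $T^d$ which, being positive, cannot cancel in the sum over components. The corresponding fact $\val(\EP(\e(\GL_n))) = -n^2$ with positive leading coefficient follows from the explicit factorization $\e(\GL_n) = \prod_{i=0}^{n-1}(\bL^n - \bL^i)$, or simply by viewing $\GL_n$ as a dense open subscheme of $\bA^{n^2}$.

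Putting these together, each summand $\EP(\e(V_i))/\EP(\e(\GL_{n_i}))$ has valuation $-(\dim V_i - n_i^2) = -\dim \cY_i$ with positive leading coefficient at $T^{\dim \cY_i}$. Strata $\cY_i$ with $\dim \cY_i < \dim \cY$ contribute only to strictly smaller-degree terms, while those with $\dim \cY_i = \dim \cY$ each contribute a positive coefficient at $T^{\dim \cY}$, and positivity of each such contribution prevents cancellation. This yields both $\val(\EP(\e(\cY))) = -\dim \cY$ and positivity of the coefficient of $T^{\dim \cY}$. I expect the main obstacle to be securing the precise form of the stratification into quotient stacks $[V_i/\GL_{n_i}]$ compatible with the chosen definition of $\e(\cY)$; however this should be essentially immediate from the construction of $\e$ on stacks with affine geometric stabilizers used elsewhere in the paper.
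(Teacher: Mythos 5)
Your proposal is correct and follows essentially the same route as the paper: stratify $\cY$ into quotient stacks $[V_i/\GL_{n_i}]$ via Kresch, use that $\GL_n$ is special so $\e([V/\GL_n]) = \e(V)\e(\GL_n)^{-1}$, invoke the classical scheme case, and use positivity of the top coefficient to rule out cancellation across strata (which is exactly the role the paper's accompanying remark assigns to the positivity statement).
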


\begin{remark}
Although later we will only use the first part of this lemma, the positivity part is useful in its proof. Specifically, this positivity allows us to avoid cancellation during the reduction steps below.
\end{remark}

\begin{proof}
By \autoref{lemmaDimensionAndStratification} and the fact that $\cY$ can be stratified into quotient stacks \cite[Proposition 3.5.9]{Kresch}, we may assume that $\cY = [Y / G]$, where $G$ is a general linear group acting on a finite type $k$-scheme $Y$. Note that
\[
	\dim[Y / G] = \dim Y - \dim G,
\]
and because $G$ is a special group, $\e(G)$ is a unit and
\[
	\e([Y / G]) = \e(Y) \e(G)^{-1}.
\]
Therefore we are done if the lemma holds in the case where $\cY$ is a scheme (applied to the schemes $Y$ and $G$), but this case is well known, e.g., by using Nagata compactifications, Chow's lemma, and resolution of singularities to reduce to the case of smooth and projective varieties where Euler--Poincar\'{e} polynomials coincide with Poincar\'{e} polynomials.
\end{proof}

\begin{proposition}
\label{propositionNormOfStackDimensionOfStack}
Let $\cY$ be a finite type Artin stack over $k$ with affine geometric stabilizers. Then 
\[
	\Vert \e(\cY) \Vert = \exp(\dim\cY).
\]
\end{proposition}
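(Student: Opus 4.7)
The plan is to sandwich the norm between two bounds: $\Vert\e(\cY)\Vert\le\exp(\dim\cY)$ and $\Vert\e(\cY)\Vert\ge\exp(\dim\cY)$. Because $\widehat{\sM}_k$ is equipped with the non-Archimedean dimensional norm and the values of $\Vert\cdot\Vert$ lie in the discrete set $\{\exp(d)\mid d\in\Z\}\cup\{0\}$, these two inequalities together force equality.

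For the upper bound, I would stratify $\cY$ into finitely many locally closed quotient substacks $[Y_i/G_i]$ with $G_i$ a general linear group using \cite[Proposition 3.5.9]{Kresch}, as in the proof of Lemma \ref{lemmaEulerPoincarePolynomialAndDimension}. By Lemma \ref{lemmaDimensionAndStratification}, $\dim\cY=\max_i\dim[Y_i/G_i]$. Using the non-Archimedean property and additivity, it suffices to show $\Vert\e([Y_i/G_i])\Vert\le\exp(\dim[Y_i/G_i])$ for each $i$. Since $\e([Y_i/G_i])=\e(Y_i)\e(G_i)^{-1}$ and $\Vert\e(Y_i)\Vert=\exp(\dim Y_i)$ by the scheme case, this reduces to showing $\Vert\e(G)^{-1}\Vert\le\exp(-\dim G)$ for $G=GL_n$. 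Here I would use the explicit formula $\e(GL_n)=\bL^{n^2}\prod_{j=1}^n(1-\bL^{-j})$, whose inverse in $\widehat{\sM}_k$ is
\[
\e(GL_n)^{-1}=\bL^{-n^2}\prod_{j=1}^n\sum_{l\ge 0}\bL^{-lj}.
\]
Each geometric series $\sum_{l\ge 0}\bL^{-lj}$ has norm at most $1$, so $\Vert\e(GL_n)^{-1}\Vert\le\exp(-n^2)=\exp(-\dim GL_n)$, as desired.

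For the lower bound, I would invoke continuity of $\EP\colon\widehat{\sM}_k\to\Q\llparenthesis T^{-1}\rrparenthesis$ with respect to the dimensional filtration on the source and the $T^{-1}$-adic filtration on the target: if $\alpha\in\widehat{\sM}_k$ with $\Vert\alpha\Vert\le\exp(d)$, then $\alpha$ lies in the $(-d)$-th piece of the dimensional filtration, and every generator $[S]\bL^{-i}$ there has $\EP([S]\bL^{-i})$ a Laurent polynomial of top $T$-degree at most $d$. Passing to limits gives $\val(\EP(\alpha))\ge -d$, i.e., $\Vert\alpha\Vert\ge\exp(-\val(\EP(\alpha)))$. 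Applying this to $\alpha=\e(\cY)$ and using Lemma \ref{lemmaEulerPoincarePolynomialAndDimension}, we conclude $\Vert\e(\cY)\Vert\ge\exp(\dim\cY)$.

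The main obstacle will be the bookkeeping for $\e(G)^{-1}$: verifying that the geometric series expansions converge to genuine inverses in $\widehat{\sM}_k$ and that the resulting norm estimate is tight. Everything else is a straightforward reduction to Lemmas \ref{lemmaDimensionAndStratification} and \ref{lemmaEulerPoincarePolynomialAndDimension} together with standard continuity of $\EP$.
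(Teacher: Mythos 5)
Your proposal is correct and follows essentially the same route as the paper: the lower bound via Lemma \ref{lemmaEulerPoincarePolynomialAndDimension} and compatibility of $\EP$ with the dimensional filtration, and the upper bound via Kresch's stratification into quotients $[Y/G]$ together with Lemma \ref{lemmaDimensionAndStratification} and the non-archimedean inequality. The only local difference is how you bound $\Vert\e(G)^{-1}\Vert$: you expand $\e(GL_n)^{-1}$ explicitly as a convergent geometric series, whereas the paper cites the identity $\Vert\e(G)^{-1}\Vert=\Vert\e(G)\Vert^{-1}$ from \cite[Lemma 3.17]{SatrianoUsatine1} and combines it with the already-established lower bound applied to the scheme $G$; both are valid.
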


\begin{proof}
First we note that \autoref{lemmaEulerPoincarePolynomialAndDimension} implies $\Vert \e(\cY) \Vert \geq \exp(\dim\cY)$, so we only need to show $\Vert \e(\cY) \Vert \leq \exp(\dim\cY)$. Then by \autoref{lemmaDimensionAndStratification}, the fact that $\Vert \cdot \Vert$ satisfies the non-archimedean triangle inequality, and the fact that $\cY$ can be stratified into quotient stacks \cite[Proposition 3.5.9]{Kresch}, we may assume that $\cY = [Y / G]$, where $G$ is a general linear group acting on a finite type $k$-scheme $Y$.

Because $G$ is a special group, $\e(G)$ is a unit and 
\[
	\e(\cY) = \e(Y)\e(G)^{-1},
\]
and by \cite[Lemma 3.17]{SatrianoUsatine1},
\[
	\Vert \e(G)^{-1} \Vert = \Vert \e(G) \Vert^{-1}.
\]
Therefore
\[
	\Vert \e(\cY) \Vert \leq \Vert \e(Y) \Vert \cdot \Vert \e(G) \Vert^{-1} \leq \exp(\dim Y) \cdot \exp(\dim G)^{-1} = \exp(\dim \cY),
\]
where we note that $\Vert \e(Y) \Vert \leq \exp(\dim Y)$ by definition of $\Vert \cdot \Vert$ and $\Vert \e(G) \Vert \geq \exp(\dim G)$ by the beginning of this proof.
\end{proof}

The following consequence will be useful when studying the behaviour of $\mu_{\cX,d}$.

\begin{proposition}\label{propositionNormsAndSumsOfLimits}
Let $m \in \Z_{\geq 0}$ and for each $i \in \{1, \dots, m\}$, let $\{\cY_{i, n}\}_{n \in \Z_{\geq 0}}$ be a sequence of finite type Artin stacks over $k$ with affine geometric stabilizers such that $\{\e(\cY_{i,n})\}_{n \in \Z_{\geq 0}}$ converges as $n \to \infty$. Then
\[
	\Vert \sum_{i = 1}^m \lim_{n \to \infty}  \e(\cY_{i,n}) \Vert = \max_{1\leq i\leq m} \Vert \lim_{n \to \infty}  \e(\cY_{i,n}) \Vert.
\]
\end{proposition}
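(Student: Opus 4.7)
The plan is to reduce the desired equality to a positivity statement about leading coefficients of Euler--Poincar\'{e} polynomials. Since the norm $\Vert\cdot\Vert$ is non-archimedean, the inequality $\Vert \sum_i \lim_n \e(\cY_{i,n})\Vert \leq \max_i \Vert \lim_n \e(\cY_{i,n})\Vert$ is automatic; the real content is that no destructive cancellation occurs at the highest degree, and this is precisely what the positivity clause of Lemma~\ref{lemmaEulerPoincarePolynomialAndDimension} will provide, after a limit argument.

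Set $a_i := \lim_{n \to \infty} \e(\cY_{i,n}) \in \widehat\sM_k$. Indices with $a_i = 0$ contribute zero to both sides and may be discarded, so I may assume $a_i \neq 0$ for every $i$. The non-archimedean property forces $\Vert \e(\cY_{i,n})\Vert = \Vert a_i\Vert$ for $n \gg 0$, so by Proposition~\ref{propositionNormOfStackDimensionOfStack} the dimension $d_i := \dim \cY_{i,n}$ stabilizes with $\Vert a_i\Vert = \exp(d_i)$. Applying the continuous ring homomorphism $\EP$, the convergence $\EP(\e(\cY_{i,n})) \to \EP(a_i)$ in the $T^{-1}$-adic topology on $\Q\llparenthesis T^{-1}\rrparenthesis$, combined with Lemma~\ref{lemmaEulerPoincarePolynomialAndDimension} and the same non-archimedean strengthening, shows that $\val(\EP(a_i)) = -d_i$ and that the coefficient of $T^{d_i}$ in $\EP(a_i)$ literally equals the (positive) leading coefficient of $\EP(\e(\cY_{i,n}))$ for $n \gg 0$; call this strictly positive rational $c_i$.

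Let $d := \max_i d_i$. The coefficient of $T^d$ in $\EP(\sum_i a_i) = \sum_i \EP(a_i)$ is the nonempty finite sum $\sum_{i : d_i = d} c_i$ of strictly positive reals, hence nonzero. Therefore $\val(\EP(\sum_i a_i)) = -d$, and in particular $\sum_i a_i \neq 0$. To translate this back into a statement about $\Vert\cdot\Vert$, consider the finite type Artin stacks $\cZ_n := \coprod_i \cY_{i,n}$, which have affine geometric stabilizers. By Lemma~\ref{lemmaDimensionAndStratification} and the stabilization above, $\dim \cZ_n = d$ for $n \gg 0$, so $\Vert \e(\cZ_n)\Vert = \exp(d)$ by Proposition~\ref{propositionNormOfStackDimensionOfStack}. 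Since $\e(\cZ_n) = \sum_i \e(\cY_{i,n}) \to \sum_i a_i \neq 0$, non-archimedean continuity of $\Vert\cdot\Vert$ gives $\Vert \sum_i a_i\Vert = \Vert \e(\cZ_n)\Vert = \exp(d) = \max_i \Vert a_i\Vert$ for $n \gg 0$, completing the proof. The most delicate step is the passage from convergence in $\widehat\sM_k$ to the literal stabilization (not merely convergence) of the top-degree $\EP$-coefficients, which relies essentially on the non-archimedean strengthening of continuity combined with the positivity built into Lemma~\ref{lemmaEulerPoincarePolynomialAndDimension}.
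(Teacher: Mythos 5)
Your argument is correct, and its final step is in fact the paper's entire proof: the identity $\Vert \sum_i \e(\cY_{i,n})\Vert = \Vert \e(\coprod_i \cY_{i,n})\Vert = \exp(\dim \coprod_i \cY_{i,n}) = \max_i \Vert \e(\cY_{i,n})\Vert$, valid for every $n$ by \autoref{propositionNormOfStackDimensionOfStack} and \autoref{lemmaDimensionAndStratification}, combined with passage to the limit. The difference is that the paper passes to the limit using only the \emph{ordinary} continuity of $\Vert\cdot\Vert\colon \widehat{\sM}_k \to \R_{\geq 0}$ (and of $\max$ of finitely many real sequences), which requires no information about whether $\sum_i \lim_n \e(\cY_{i,n})$ vanishes: one simply has $\Vert \sum_i \lim_n \e(\cY_{i,n})\Vert = \lim_n \Vert \sum_i \e(\cY_{i,n})\Vert = \lim_n \max_i \Vert \e(\cY_{i,n})\Vert = \max_i \Vert \lim_n \e(\cY_{i,n})\Vert$. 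Your entire $\EP$ detour --- stabilization of dimensions, stabilization of leading coefficients, positivity of $\sum_{i:d_i=d} c_i$, and the conclusion $\sum_i a_i \neq 0$ --- exists only to license the ``non-archimedean strengthening'' ($x_n \to x \neq 0$ implies $\Vert x_n\Vert = \Vert x\Vert$ eventually), which is stronger than what is needed. The cancellation-avoidance role of the positivity clause of \autoref{lemmaEulerPoincarePolynomialAndDimension} is already fully encapsulated in the lower bound $\Vert \e(\cZ_n)\Vert \geq \exp(\dim \cZ_n)$ of \autoref{propositionNormOfStackDimensionOfStack} applied to the disjoint union, so you are re-deriving it a second time at the level of the limits $a_i$. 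Everything you write does go through (the coefficientwise stabilization under $T^{-1}$-adic convergence is justified correctly), so this is a valid if roundabout proof; the one cosmetic point to tidy is the degenerate case where all $a_i = 0$ are discarded, where both sides are $0$ by convention.
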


\begin{proof}
The proposition follows from the continuity of $\Vert \cdot \Vert: \widehat{\sM}_k \to \R_{\geq 0}$ and the fact that for all $n \in \Z_{\geq 0}$,
\begin{align*}
	\Vert \sum_{i=1}^m \e(\cY_{i, n}) \Vert &= \Vert \e( \cY_{1, n} \sqcup \dots \sqcup \cY_{m, n}) \Vert = \exp(\dim(\cY_{1, n} \sqcup \dots \sqcup \cY_{m, n}))\\
	&= \max_{1\leq i\leq m}\, \exp(\dim \cY_{i,n}) = \max_{1\leq i \leq m} \Vert \e(\cY_{i,n}) \Vert,
\end{align*}
where the second and final equalities are due to \autoref{propositionNormOfStackDimensionOfStack} and the third equality is due to \autoref{lemmaDimensionAndStratification} (or the definition of dimension).
\end{proof}

\begin{remark}\label{remarkMeasureWrittenAsEffectiveLimit}
Let $\cX$ be a locally finite type Artin stack over $k$ with affine geometric stabilizers, let $d \in \Z$, let $\cC \subset |\sL(\cX)|$ be a $d$-convergent cylinder, and let $m \in \Z$. Then there exists a sequence $\{\cY_n\}_{n \in \Z_{\geq 0}}$ of finite type Artin stacks over $k$ with affine geometric stabilizers such that $\bL^m\mu_{\cX, d}(\cC) = \lim_{n \to \infty} \e(\cY_n)$. Specifically, one may take $\cY_n=\bA^m\times\theta_n(\cC)\times B\bG_a^{(n+1)\dim\cX}$ where, if $m<0$, we define $\bA^m$ as $B\bG_a^{-m}$.
\end{remark}

\section{$d$-convergence and $d$-measurable sets}

The following notion of convergence plays an integral role throughout the remainder of the paper.

\begin{definition}\label{definitiondConvergent}
Let $\cX$ be a locally finite type Artin stack over $k$ with affine geometric stabilizers, let $\cC \subset |\sL(\cX)|$ be a cylinder, and let $d \in \Z$. We call $\cC$ \emph{$d$-convergent} if $\cC$ is small and the sequence
\[
	\{\e(\theta_n(\cC)) \bL^{-(n+1)d}\}_{n \in \Z_{\geq 0}}
\]
converges. In that case, we set
\[
	\mu_{\cX, d}(\cC) = \lim_{n \to \infty} \e(\theta_n(\cC)) \bL^{-(n+1)d}.
\]
\end{definition}

\begin{remark}
If $\cX$ is equidimensional, finite type, and either smooth or a global quotient, then any cylinder $\cC \subset |\sL(\cX)|$ is $\dim \cX$-convergent by \cite[Theorems 3.9 and 3.33]{SatrianoUsatine1} and $\mu_{\cX,\dim \cX}(\cC) = \mu_\cX(\cC)$.
\end{remark}

\begin{proposition}
Let $\cX$ be a locally finite type Artin stack over $k$ with affine geometric stabilizers and separated diagonal, let $\cU$ be an open substack of $\cX$ that is smooth and finite type over $k$ and irreducible, and let $\cC \subset |\sL(\cX)|$ be a small cylinder that is disjoint from $|\sL(\cX \setminus \cU)|$. Then $\cC$ is $\dim\cU$-convergent.
\end{proposition}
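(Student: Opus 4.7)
The plan is to apply \autoref{theoremRepresentableChangeOfVariables} to a representable resolution of singularities of the closure of $\cU$. The key point is that part (\ref{theoremPartIntegralOfMCVF}) of that theorem expresses $\e(\theta_n(\cC))\bL^{-(n+1)\dim\cZ}$ for large $n$ as an integral over the resolution $\cZ$ that does not depend on $n$, so the relevant sequence is eventually constant and therefore convergent.

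If $\cC=\emptyset$ the claim is trivial, so assume $\cC\neq\emptyset$. Using that $\cC$ is small, I would pick a quasi-compact open $\cX_0\subset\cX$ containing $\theta_0(\cC)$, and replace $\cX$ by $\cX_0$ and $\cU$ by $\cU_0:=\cU\cap\cX_0$. The cylinder $\cC$ still lies in $|\sL(\cX_0)|$, since any arc whose closed point lies in the open substack $\cX_0$ automatically factors through $\cX_0$. By \autoref{lemmaNotArcOfBoundaryImpliesTruncationInClosure}, $\theta_0(\cC)\subset\overline{|\cU|}$, so $\cU_0$ is a nonempty open subset of the irreducible $\cU$, hence dense in $\cU$ and itself smooth, irreducible, and finite type over $k$ with $\dim\cU_0=\dim\cU$. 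Disjointness of $\cC$ from $|\sL(\cX_0\setminus\cU_0)|$ follows from the identity $\cX_0\setminus\cU_0=\cX_0\cap(\cX\setminus\cU)$. Thus we may assume $\cX$ is finite type.

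Let $\cW\subset\cX$ be the reduced scheme-theoretic closure of $\cU$; it is irreducible, finite type, and contains $\cU$ as a dense smooth open substack, in particular inside its smooth locus. \autoref{propositionResOfSingForStacks} then produces a smooth irreducible finite type Artin stack $\cZ$ and a proper, representable-by-schemes morphism $\rho\colon\cZ\to\cW$ restricting to an isomorphism over $\cU$, whose composition $\pi\colon\cZ\to\cW\hookrightarrow\cX$ is proper and representable by schemes. Factoring $\pi$ as itself followed by the identity open immersion on $\cX$, and noting that $\theta_0(\cC)\subset\overline{|\cU|}=|\cW|=\pi(|\cZ|)$ (the latter equality because $\rho$ is proper with image containing the dense subset $\cU$), every hypothesis of \autoref{theoremRepresentableChangeOfVariables} is satisfied. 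Part (\ref{theoremPartIntegralOfMCVF}) then furnishes some $N\in\Z_{\geq 0}$ such that
\[
\e(\theta_n(\cC))\,\bL^{-(n+1)\dim\cZ} \;=\; \int_{\sL(\pi)^{-1}(\cC)} \bL^{-\het^{(0)}_{L_{\cZ/\cX}}}\,\diff\mu_\cZ
\]
for all $n\geq N$. Since $\cZ$ is irreducible and $\rho^{-1}(\cU)\cong\cU$ is a nonempty (hence dense) open of $\cZ$, we have $\dim\cZ=\dim\cU$; and since the right-hand side is independent of $n$, the sequence is eventually constant and converges, giving $\dim\cU$-convergence. I expect the only mild technical care to be in the first reduction — verifying that the disjointness of $\cC$ from the boundary is preserved after shrinking $\cX$ and $\cU$ — while the rest is a clean invocation of the machinery already set up.
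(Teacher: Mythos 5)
Your proof is correct and follows essentially the same route as the paper's: reduce to the finite type case using smallness, pass to the scheme-theoretic image (closure) of $\cU$, resolve it via \autoref{propositionResOfSingForStacks}, verify the hypotheses of \autoref{theoremRepresentableChangeOfVariables} using \autoref{lemmaNotArcOfBoundaryImpliesTruncationInClosure}, and conclude from part (\ref{theoremPartIntegralOfMCVF}) that the sequence is eventually constant. The only cosmetic difference is that you spell out the reduction steps (e.g., that $\theta_0(\cC)\subset\pi(|\cZ|)$ and that disjointness from the boundary is preserved) in more detail than the paper does.
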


\begin{proof}
By replacing $\cX$ with a quasi-compact open substack that contains $\theta_0(\cC)$ and replacing $\cU$ with its intersection with that quasi-compact open substack, we may assume that $\cX$ is finite type over $k$. Then the scheme theoretic image $\cX'$ of $\cU \hookrightarrow \cX$ is integral and finite type over $k$, so by \autoref{propositionResOfSingForStacks} there exists an irreducible smooth finite type Artin stack $\cZ$ over $k$ and a morphism $\pi': \cZ \to \cX'$ that is proper and representable by schemes and such that $\pi'^{-1}(\cU) \to \cU$ is an isomorphism. Considering the composition $\pi: \cZ \xrightarrow{\pi'} \cX' \hookrightarrow \cX$ and noting \autoref{lemmaNotArcOfBoundaryImpliesTruncationInClosure}, we see that $\cX, \cZ, \pi, \cU, \cC$ satisfy the hypotheses of \autoref{theoremRepresentableChangeOfVariables}. Therefore, noting that $\dim\cU = \dim\cZ$, \autoref{theoremRepresentableChangeOfVariables} parts (\ref{theoremPartConstructibleRepresentableOfMCVF}) and (\ref{theoremPartIntegralOfMCVF}) imply that $\cC$ is $\dim\cU$-convergent.
\end{proof}

Having defined $d$-convergence, we may now introduce the corresponding notion of $d$-measurability.

\begin{definition}\label{def:d-measurability-and-measure}
Let $\cX$ be a locally finite type Artin stack over $k$ with affine geometric stabilizers, and let $d \in \Z$. Let $\cC \subset |\sL(\cX)|$, let $\varepsilon \in \R_{>0}$, let $\cC^{(0)} \subset |\sL(\cX)|$ be a $d$-convergent cylinder, and let $\{\cC^{(i)}\}_{i \in I}$ be a collection of $d$-convergent cylinders in $|\sL(\cX)|$.

We call $(\cC^{(0)}, \{\cC^{(i)}\}_{i \in I})$ a \emph{$d$-convergent cylindrical $\varepsilon$-approximation} of $\cC$ if
\[
	(\cC \cup \cC^{(0)}) \setminus (\cC \cap \cC^{(0)}) \subset \bigcup_{i \in I} \cC^{(i)},
\]
and $\Vert \mu_{\cX,d}(\cC^{(i)}) \Vert < \varepsilon$ for all $i$.
\end{definition}

\begin{definition}\label{def:muxd}
Let $\cX$ be a locally finite type Artin stack over $k$ with affine geometric stabilizers, let $d \in \Z$, and let $\cC \subset |\sL(\cX)|$. We say $\cC$ is \emph{$d$-measurable} if for any $\varepsilon > 0$, the set $\cC$ has a $d$-convergent cylindrical $\varepsilon$-approximation. In that case we let $\mu_{\cX,d}(\cC) \in \widehat{\sM}_k$ denote the unique element of $\widehat{\sM}_k$ such that
\[
\Vert \mu_{\cX,d}(\cC) - \mu_{\cX,d}(\cC^{(0)}) \Vert < \varepsilon
\]
for any $d$-convergent cylindrical $\varepsilon$-approximation $(\cC^{(0)}, \{\cC^{(i)}\}_{i \in I})$ of $\cC$.
\end{definition}

\begin{remark}
When $\cC$ is $d$-measurable, such an element $\mu_{\cX, d}(\cC)$ exists by standard methods. See, e.g., \cite[Chapter 6, Theorem 3.3.2]{ChambertLoirNicaiseSebag} for the case where $\cX$ is a finite type equidimensional scheme and $d = \dim\cX$, and note that the exact same argument works in our setting.
\end{remark}

\begin{remark}
Suppose $\cX$ is equidimensional, finite type, and a global quotient. Then $\cC$ is $\dim\cC$-measurable if and only if $\cC$ is measurable in the sense of \cite[Definition 3.13]{SatrianoUsatine1}, and in that case $\mu_{\cX,\dim \cX}(\cC) = \mu_\cX(\cC)$. In particular, these definitions are consistent with the usual definitions in the case $\cX$ is an equidimensional and finite type scheme.
\end{remark}

Our notion of $d$-measurability allows us to speak of $d$-measurable functions.

\begin{definition}\label{definitionOfdMeasurable}
Let $\cX$ be a locally finite type Artin stack over $k$ with affine geometric stabilizers, let $d \in \Z$, let $\cC \subset |\sL(\cX)|$, and let $f: \cC \to \Z \cup \{\infty\}$. We say $f$ is $d$-measurable if for all $n \in \Z$, the set $f^{-1}(n)$ is a $d$-measurable subset of $|\sL(\cX)|$.
\end{definition}

\begin{definition}\label{definitionOfdIntegrable}
Let $\cX$ be a locally finite type Artin stack over $k$ with affine geometric stabilizers, let $d \in \Z$, let $\cC \subset |\sL(\cX)|$, and let $f: \cC \to \Z \cup \{\infty\}$. We say $\bL^{f}$ is $d$-integrable if $f$ is $d$-measurable and the series $\sum_{n \in \Z} \bL^n \mu_{\cX, d}(f^{-1}(n))$ converges. In that case we set
\[
	\int_\cC \bL^f \diff\mu_{\cX,d} = \sum_{n \in \Z} \bL^n \mu_{\cX, d}(f^{-1}(n)).
\]
\end{definition}

\begin{remark}
If $f$ is a function defined on a larger subset than $\cC$, we may say $f$ is $d$-measurable (resp. $\bL^{f}$ is $d$-integrable) on $\cC$ when we mean \autoref{definitionOfdMeasurable} (resp. \autoref{definitionOfdIntegrable}) is satisfied by the restriction of $f$ to $\cC$.
\end{remark}

\begin{notation}
Let $\cX$ be a locally finite type Artin stack over $k$ with affine geometric stabilizers, let $\cC \subset |\sL(\cX)|$, let $d \in \Z$, let $f: \cC \to \Z$ be a function that takes only finitely many values, and assume that for all $n \in \Z$ the set $f^{-1}(n)$ is a $d$-convergent cylinder in $|\sL(\cX)|$. Then we will set
\[
	\int_\cC \bL^f \diff\mu_{\cX, d} = \sum_{n \in \Z} \bL^n \mu_{\cX, d}( f^{-1}(n) ).
\]
\end{notation}

For the next two results, fix $\cX$ a locally finite type Artin stack over $k$ with affine geometric stabilizers, and fix $d \in \Z$. 

\begin{lemma}\label{lemmaNormInclusionCylinders}
Let $\cC, \cD \subset |\sL(\cX)|$ be $d$-convergent cylinders. If $\cC \subset \cD$, then
\[
	\Vert \mu_{\cX,d}(\cC) \Vert \leq \Vert \mu_{\cX,d}(\cD) \Vert.
\]
\end{lemma}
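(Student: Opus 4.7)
The plan is to pass to the defining limit for $\mu_{\cX,d}$ and use monotonicity of dimension under inclusion of constructible sets, together with the norm computation of Proposition \ref{propositionNormOfStackDimensionOfStack}. First, since $\cC \subset \cD$, for every $n \in \Z_{\geq 0}$ we have $\theta_n(\cC) \subset \theta_n(\cD)$. Both are quasi-compact locally constructible subsets of $|\sL_n(\cX)|$ by Proposition \ref{propositionSmallCylinderHasQuasiCompactConstructibleImages}. By smallness of $\cD$, I can choose a quasi-compact open substack $\cV \subset \cX$ whose 0-truncation contains $\theta_0(\cD)$, and work entirely inside the finite type stack $\sL_n(\cV)$, so that $\theta_n(\cC)$ and $\theta_n(\cD)$ become constructible subsets there. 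Then Lemma \ref{lemmaDimensionAndStratification} (monotonicity of dimension for constructible subsets of a finite type stack) yields $\dim \theta_n(\cC) \leq \dim \theta_n(\cD)$.

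Next I would convert this dimension bound into a norm bound. By Proposition \ref{propositionNormOfStackDimensionOfStack},
\[
\Vert \e(\theta_n(\cC)) \Vert = \exp(\dim \theta_n(\cC)) \leq \exp(\dim \theta_n(\cD)) = \Vert \e(\theta_n(\cD)) \Vert.
\]
Because $\bL$ is a unit in $\widehat{\sM}_k$ with $\Vert \bL^{-1} \Vert = \Vert \bL \Vert^{-1}$ (see \cite[Lemma 3.17]{SatrianoUsatine1}), the non-archimedean inequality $\Vert ab \Vert \leq \Vert a \Vert \Vert b \Vert$ applied to $a = (a\bL^{-m})\bL^m$ and $b = \bL^{-m}$ forces $\Vert a\bL^{-m}\Vert = \Vert a \Vert \cdot \exp(-m)$ for all $a \in \widehat{\sM}_k$ and $m \in \Z$. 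Multiplying the inequality above by $\exp(-(n+1)d)$ therefore gives
\[
\Vert \e(\theta_n(\cC))\bL^{-(n+1)d} \Vert \leq \Vert \e(\theta_n(\cD))\bL^{-(n+1)d} \Vert
\]
for every $n$.

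Finally, since $\cC$ and $\cD$ are $d$-convergent, the two sides converge in $\widehat{\sM}_k$ to $\mu_{\cX,d}(\cC)$ and $\mu_{\cX,d}(\cD)$ respectively, and the continuity of $\Vert \cdot \Vert\colon \widehat{\sM}_k \to \R_{\geq 0}$ lets me pass the inequality to the limit to conclude $\Vert \mu_{\cX,d}(\cC) \Vert \leq \Vert \mu_{\cX,d}(\cD) \Vert$. There is no real obstacle here: the proof is a direct bookkeeping argument; the only point requiring a moment's care is the identity $\Vert a\bL^{-m}\Vert = \Vert a\Vert \exp(-m)$, which is immediate from $\bL$ being a unit with known norm, and the reduction to a quasi-compact substack so that Lemma \ref{lemmaDimensionAndStratification} applies.
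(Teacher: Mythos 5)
Your argument is correct, but it takes a genuinely different route from the paper's. The paper decomposes $\cD$ as the disjoint union of $\cC$ and $\cD \setminus \cC$, observes that $\cD \setminus \cC$ is again a $d$-convergent cylinder with $\mu_{\cX,d}(\cD) = \mu_{\cX,d}(\cC) + \mu_{\cX,d}(\cD \setminus \cC)$, and then invokes \autoref{propositionNormsAndSumsOfLimits} together with \autoref{remarkMeasureWrittenAsEffectiveLimit}: since both summands are limits of effective classes, the norm of the sum is the \emph{maximum} of the two norms, so no cancellation can make $\Vert \mu_{\cX,d}(\cD) \Vert$ smaller than $\Vert \mu_{\cX,d}(\cC) \Vert$. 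You instead work termwise at each finite level: $\theta_n(\cC) \subset \theta_n(\cD)$ gives $\dim\theta_n(\cC) \leq \dim\theta_n(\cD)$, hence $\Vert \e(\theta_n(\cC))\bL^{-(n+1)d} \Vert \leq \Vert \e(\theta_n(\cD))\bL^{-(n+1)d} \Vert$ via \autoref{propositionNormOfStackDimensionOfStack}, and continuity of $\Vert\cdot\Vert$ passes the inequality to the limit. Both proofs ultimately rest on the norm--dimension equality of \autoref{propositionNormOfStackDimensionOfStack}; yours avoids checking that $\cD\setminus\cC$ is $d$-convergent and that $\mu_{\cX,d}$ is additive on it, while the paper's is a one-line consequence of machinery it needs elsewhere anyway. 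Two small points of care in your write-up: \autoref{lemmaDimensionAndStratification} as stated concerns stratifications rather than monotonicity of dimension under inclusion of constructible subsets, so the monotonicity you use is standard but only implicit in that lemma's proof; and the equality $\Vert \e(\theta_n(\cC)) \Vert = \exp(\dim\theta_n(\cC))$ for a constructible subset requires combining \autoref{propositionNormOfStackDimensionOfStack} with a stratification (e.g.\ via \autoref{propositionNormsAndSumsOfLimits} applied to constant sequences). Neither point is a gap.
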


\begin{proof}
It is straightforward to check that $\cD \setminus \cC$ is a $d$-convergent cylinder and that $\mu_{\cX, d}(\cD) = \mu_{\cX,d}(\cC) + \mu_{\cX,d}(\cD \setminus \cC)$. The lemma then follows from \autoref{propositionNormsAndSumsOfLimits} and \autoref{remarkMeasureWrittenAsEffectiveLimit}.
\end{proof}

\begin{proposition}
\label{propositionConditionForUnionOfMeasurablesIsMeasurable}
Let $\{\cC^{(n)}\}_{n \in \Z_{\geq 0}}$ be a sequence of pairwise disjoint $d$-measurable subsets of $|\sL(\cX)|$. Then $\lim_{n \to \infty} \mu_{\cX, d}( \cC^{(n)}) = 0$ if and only if $\bigcup_{n \in \Z_{\geq 0}} \cC^{(n)}$ is $d$-measurable, and in that case,
\[
	\mu_{\cX,d}\left(\bigcup_{n \in \Z_{\geq 0}} \cC^{(n)}\right) = \sum_{n \in \Z_{\geq 0}} \mu_{\cX,d}(\cC^{(n)}).
\]
\end{proposition}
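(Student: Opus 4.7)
The proposition is the ultrametric countable additivity of $\mu_{\cX,d}$, and I would adapt the standard argument from the scheme case (e.g., \cite[Chapter 6, Theorem 3.3.2]{ChambertLoirNicaiseSebag}) to our stacky setting. The three structural ingredients needed---completeness of $\widehat{\sM}_k$ in the non-archimedean norm $\Vert\cdot\Vert$; the ultrametric behaviour of $\Vert\cdot\Vert$ on sums of $d$-convergent cylinders given by \autoref{propositionNormsAndSumsOfLimits} (with its consequence, the monotonicity of \autoref{lemmaNormInclusionCylinders}); and the finite subcover property for small cylinders \autoref{propositionSmallCylinderFiniteSubcover}---all hold in our context. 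As a preliminary step I would establish finite additivity of $\mu_{\cX,d}$ on disjoint $d$-measurable sets: for disjoint $d$-convergent cylinders this follows at a common truncation level $n$ from additivity of $\e$ on disjoint constructible subsets of $|\sL_n(\cX)|$, and the general case follows by combining cylindrical $\varepsilon$-approximations of each piece.

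For the reverse direction, suppose $\mu_{\cX,d}(\cC^{(n)})\to 0$. Completeness of $\widehat{\sM}_k$ gives convergence of $S := \sum_{n\geq 0}\mu_{\cX,d}(\cC^{(n)})$. Given $\varepsilon>0$, I pick $N$ with $\Vert\mu_{\cX,d}(\cC^{(n)})\Vert<\varepsilon$ for all $n>N$, and via the preliminary step a $d$-convergent cylindrical $\varepsilon$-approximation $(\cD,\{\cE^{(i)}\}_i)$ of the finite union $\bigcup_{n\leq N}\cC^{(n)}$. For each $n>N$ I pick a $d$-convergent cylindrical $\varepsilon$-approximation $(\cD^{(n)},\{\cE^{(n,j)}\}_j)$ of $\cC^{(n)}$; the ultrametric triangle inequality forces $\Vert\mu_{\cX,d}(\cD^{(n)})\Vert<\varepsilon$. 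Adjoining the $\cD^{(n)}$ and $\cE^{(n,j)}$ for $n>N$ to $\{\cE^{(i)}\}_i$ yields a $d$-convergent cylindrical $\varepsilon$-approximation of $\cC:=\bigcup_n\cC^{(n)}$, exhibiting $\cC$ as $d$-measurable. A straightforward ultrametric estimate then gives $\Vert S - \mu_{\cX,d}(\cD)\Vert<\varepsilon$, and uniqueness of the measure forces $\mu_{\cX,d}(\cC)=S$.

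For the forward direction, suppose $\cC:=\bigcup_n\cC^{(n)}$ is $d$-measurable. Fix $\varepsilon>0$, pick a $d$-convergent cylindrical $\varepsilon$-approximation $(\cD,\{\cE^{(i)}\}_i)$ of $\cC$, and for each $n$ a $d$-convergent cylindrical $\varepsilon$-approximation $(\cD^{(n)},\{\cE^{(n,j)}\}_j)$ of $\cC^{(n)}$. From $\cD\subset\cC\cup\bigcup_i\cE^{(i)}\subset\bigcup_n\cD^{(n)}\cup\bigcup_{n,j}\cE^{(n,j)}\cup\bigcup_i\cE^{(i)}$ and smallness of $\cD$, \autoref{propositionSmallCylinderFiniteSubcover} extracts $N$ such that $\cD$ admits a finite subcover involving only $\cD^{(n)}$ for $n\leq N$ together with finitely many cylinders of norm $<\varepsilon$. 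For $n_0>N$, pairwise disjointness of the $\cC^{(m)}$ forces $\cC^{(n_0)}\cap\cD^{(m)}\subset\bigcup_j\cE^{(m,j)}$ for each $m\leq N$, so $\cC^{(n_0)}$---and hence the small cylinder $\cD^{(n_0)}$---lies in a union of cylinders each of norm $<\varepsilon$. A second application of \autoref{propositionSmallCylinderFiniteSubcover} to $\cD^{(n_0)}$, combined with \autoref{lemmaNormInclusionCylinders} and finite additivity, gives $\Vert\mu_{\cX,d}(\cD^{(n_0)})\Vert<\varepsilon$, whence $\Vert\mu_{\cX,d}(\cC^{(n_0)})\Vert\leq\varepsilon$ by the approximation property. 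Letting $\varepsilon\to 0$ yields the limit.

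The main obstacle is the forward direction: one must carefully verify that the disjointness of the $\cC^{(n)}$ translates into the $\cD^{(m)}$-pieces (for $m\leq N$) being genuinely absorbed into small error cylinders $\cE^{(m,j)}$, so that the smallness of $\cD$ yields a genuine norm-bound on $\mu_{\cX,d}(\cC^{(n_0)})$ for $n_0>N$ rather than merely an artifact of the approximations.
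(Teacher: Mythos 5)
Your proof is correct and is essentially the paper's approach: the paper simply cites the scheme-case argument of Chambert-Loir--Nicaise--Sebag (Chapter 6, Proposition 3.4.3) and notes that it goes through verbatim using \autoref{propositionSmallCylinderFiniteSubcover} and \autoref{lemmaNormInclusionCylinders}, which is exactly the argument you have written out in detail.
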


\begin{proof}
A version of this proposition including the special case where $\cX$ is a finite type scheme over $k$ is proved in \cite[Chapter 6 Proposition 3.4.3]{ChambertLoirNicaiseSebag}. Using \autoref{propositionSmallCylinderFiniteSubcover} and \autoref{lemmaNormInclusionCylinders}, the same proof works here.
\end{proof}

\section{Relative height functions}

As mentioned in the introduction, our motivic change of variables formula is controlled by the following relative height function which we introduce.

\begin{notation}
If $k'$ is a field extension of $k$ and $M$ is a module over $k'\llbracket t \rrbracket$, we will let $M_\tor$ denote the torsion submodule of $M$.
\end{notation}

\begin{definition}\label{def:relhet}
Let $\pi: \cX \to \cY$ be a morphism of locally finite type Artin stacks over $k$. Let $k'$ be a field extension of $k$, and let $\varphi \in \sL(\cX)(k')$. The canonical morphism $L\pi^* L_\cY \to L_\cX$ induces a morphism $H^0(L\varphi^*L\pi^*L_\cY)_\tor \to H^0(L\varphi^* L_\cX)_\tor$ of $k'\llbracket t \rrbracket$-modules, where by slight abuse of notation we also let $\varphi$ denote the corresponding map $\Spec(k'\llbracket t \rrbracket) \to \cX$. If $\het^{(1)}_{L_{\cX/\cY}}(\varphi) \neq \infty$, set
\begin{align*}
	\het_{\cX/\cY}(\varphi) = &\het^{(0)}_{L_{\cX/\cY}}(\varphi) - \het^{(1)}_{L_{\cX/\cY}}(\varphi)\\
	&- \dim_{k'}\coker\left(H^0(L\varphi^*L\pi^*L_\cY)_\tor \to H^0(L\varphi^* L_\cX)_\tor\right),
\end{align*}
and otherwise set $\het_{\cX/\cY}(\varphi) = \infty$. This induces a map
\[
	\het_{\cX/\cY}: |\sL(\cX)| \to \Z \cup \{\infty\}
\]
that we call the \emph{height function of $\cX$ over $\cY$}.
\end{definition}

\begin{remark}
\label{remarkHeightFunctionCompatibleWithOpenSubstack}
It is straightforward to check that if $\cU$ is an open substack of $\cX$, then the restriction of $\het_{\cX/\cY}$ to $|\sL(\cU)|$ coincides with $\het_{\cU/\cY}$.
\end{remark}

We begin by showing that $\het_{\cX/\cY}$ is invariant under finite field extensions, and hence does indeed define a function on $|\sL(\cX)|$.

\begin{lemma}\label{lemmaHeightFunctionsOfCotangentComplexFieldExt}
With notation as in \autoref{def:relhet}, $\het_{\cX/\cY}$ is a well-defined function on $|\sL(\cX)|$.
\end{lemma}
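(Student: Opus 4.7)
The plan is to show that $\het_{\cX/\cY}$ is invariant under (a) isomorphism of arcs in $\sL(\cX)(k')$ and (b) field extension $k' \hookrightarrow k''$ along $\varphi \mapsto \varphi \otimes_{k'} k''$, since these are exactly the equivalences by which points of $|\sL(\cX)|$ are represented. The functions $\het^{(0)}_{L_{\cX/\cY}}$ and $\het^{(1)}_{L_{\cX/\cY}}$ are already well-defined on $|\sL(\cX)|$ by design, so the only new content is to verify that the correction term
\[
\delta(\varphi) := \dim_{k'}\coker\bigl(H^0(L\varphi^*L\pi^*L_\cY)_\tor \to H^0(L\varphi^*L_\cX)_\tor\bigr)
\]
enjoys the same invariance properties.

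Invariance under isomorphism is the easy step: if $\varphi_1 \cong \varphi_2$ in $\sL(\cX)(k')$, then $L\varphi_1^*$ and $L\varphi_2^*$ are canonically naturally isomorphic as functors $D^-_{\coh}(\cX) \to D^-(k'\llbracket t \rrbracket\text{-mod})$, and this natural isomorphism is compatible with the canonical map $L\pi^* L_\cY \to L_\cX$. Hence the induced map on $H^0(\,\cdot\,)_\tor$ is the same up to isomorphism of $k'\llbracket t \rrbracket$-modules, and its cokernel has the same $k'$-dimension.

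For the field extension step, set $R := k'\llbracket t \rrbracket$ and $R'' := k''\llbracket t \rrbracket$, and let $\varphi' := \varphi \otimes_{k'} k''$. Since $R \to R''$ is faithfully flat, derived pullback commutes with this base change, yielding natural isomorphisms $H^0(L\varphi'^* E) \cong H^0(L\varphi^* E) \otimes_R R''$ for $E \in \{L\pi^*L_\cY, L_\cX\}$. Because $L_\cX, L\pi^*L_\cY \in D^-_{\coh}(\cX)$, these $H^0$'s are finitely generated $R$-modules, and the structure theorem for finitely generated modules over the DVR $R$ shows that $(-)_\tor$ commutes with $\otimes_R R''$ (the free summand remains free and each $R/t^{n_i}$ summand becomes $R''/t^{n_i}$). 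Combined with right-exactness of $\otimes_R R''$, the cokernel $C$ of the map defining $\delta(\varphi)$ satisfies $C \otimes_R R'' \cong \coker(\alpha')$ where $\alpha'$ defines $\delta(\varphi')$. Finally, for any finitely generated torsion $R$-module $N \cong \bigoplus_i R/t^{n_i}$, one computes $\dim_{k'} N = \sum_i n_i = \dim_{k''}(N \otimes_R R'')$, so $\delta(\varphi) = \delta(\varphi')$.

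The only point requiring care is verifying that $C$ is indeed a torsion $R$-module (otherwise $\delta(\varphi) = \infty$ and the dimension identity is vacuous but one must still check equality of the $\infty$'s). This holds automatically since $\alpha$ lands inside the torsion submodule by construction, so $C$ is a quotient of $H^0(L\varphi^* L_\cX)_\tor$, which is itself a torsion submodule of a finitely generated $R$-module and hence finite-dimensional over $k'$. No step here is genuinely hard; the main obstacle is purely bookkeeping—tracking compatibility of torsion submodules with flat base change to a DVR—and this is handled cleanly by the structure theorem.
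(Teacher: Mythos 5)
Your proposal is correct and follows essentially the same route as the paper: both reduce to showing that the correction term $\dim_{k'}\coker\bigl(H^0(L\varphi^*L\pi^*L_\cY)_\tor \to H^0(L\varphi^*L_\cX)_\tor\bigr)$ is unchanged under base change along $k'\llbracket t\rrbracket \to k''\llbracket t\rrbracket$, by checking that torsion submodules and cokernels commute with this flat base change and that $k'$-dimension is preserved. The only (harmless) difference is in how that commutation is justified: the paper restricts to finite (hence \'etale) extensions and uses the exact sequence exhibiting $M_\tor$ as the kernel of $M \to M\otimes_{R'}K'$, whereas you invoke the structure theorem for finitely generated modules over the DVR $k'\llbracket t\rrbracket$, which also covers arbitrary field extensions.
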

\begin{proof}
Since $\het^{(i)}_{L_{\cX/\cY}}$ is well-defined, it suffices to prove that the quantity $$\dim_{k'}\coker\left(H^0(L\varphi^*L\pi^*L_\cY)_\tor \to H^0(L\varphi^* L_\cX)_\tor\right)$$ is independent under taking finite extensions of $k'$ (which are necessarily \'etale since we are working in characteristic $0$.)

For this, let $R' = k'\llbracket t \rrbracket$ and $R'' := k''\llbracket t \rrbracket = R' \otimes_{k'} k''$. Let $K' = k'\llparenthesis t\rrparenthesis$ and $K'' = k''\llparenthesis t\rrparenthesis = K' \otimes_{k'} k''$. Now, for any $R'$-module $M$, we have an exact sequence
\[
0\to M_{\tor}\to M\to M\otimes_{R'}K'\to 0.
\]
Tensoring with $R''$ (which is \'etale over $R'$) shows that 
\[
M_{\tor}\otimes_{R'}R''=(M\otimes_{R'}R'')_{\tor}.
\]
Hence, given a map $M\to N$ of $R'$-modules and letting $Q$ be the cokernel of $M_{\tor}\to N_{\tor}$, we see
\[
Q\otimes_{R'}R''=\coker((M\otimes_{R'}R'')_{\tor}\to (N\otimes_{R'}R'')_{\tor}).
\]
Applying this to the case $M=H^0(L\varphi^*L\pi^*L_\cY)$ and $N=H^0(L\varphi^* L_\cX)$ proves the result.
\end{proof}

The following lemma allows us to understand the relative height function when the source is smooth.

\begin{lemma}
\label{lemmaHeightFunctionsOfCotangentComplexOfSmoothStack}
Let $\cZ$ be a smooth Artin stack over $k$, let $k'$ be a field extension of $k$, and let $\varphi: \Spec(k'\llbracket t \rrbracket) \to \cZ$ be a map. Then $H^0(L\varphi^* L_\cZ)$ is a torsion-free $k'\llbracket t \rrbracket$-module and $H^i(L\varphi^*L_\cZ) = 0$ for all $i \neq 0, 1$.
\end{lemma}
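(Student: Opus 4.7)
The plan is to reduce the statement to a concrete computation on a smooth atlas of $\cZ$. The key point is that after pulling back along a smooth atlas $f\colon W \to \cZ$, the cotangent complex $L_\cZ$ becomes quasi-isomorphic to a two-term complex of locally free sheaves in cohomological degrees $0$ and $1$, which pulls back along any arc to a two-term complex of free $k'\llbracket t \rrbracket$-modules.

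First, choose a smooth surjection $f\colon W \to \cZ$ with $W$ a scheme, which exists since $\cZ$ is an Artin stack; as $\cZ$ is smooth, we may take $W$ smooth over $k$ as well, so that $L_W = \Omega^1_W$ is locally free in degree $0$. Because $f$ is representable and smooth, $L_{W/\cZ}$ is also a locally free sheaf concentrated in degree $0$. The distinguished triangle
\[
	f^*L_\cZ \to L_W \to L_{W/\cZ}
\]
therefore exhibits $f^*L_\cZ$ as quasi-isomorphic to the two-term complex $[\Omega^1_W \to \Omega^1_{W/\cZ}]$ of locally free sheaves placed in cohomological degrees $0$ and $1$.

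Next, using smoothness and representability of $f$, we lift $\varphi$ to the atlas after a field extension. The fiber product $T := \Spec k'\llbracket t \rrbracket \times_\cZ W$ is a smooth surjective scheme over $\Spec k'\llbracket t \rrbracket$, so its special fiber has a $k''$-point for some field extension $k''/k'$, and by smoothness this point lifts to a $k''\llbracket t \rrbracket$-point of $T$, i.e.\ to a morphism $\psi\colon \Spec k''\llbracket t \rrbracket \to W$ with $f\circ\psi$ equal to the base change of $\varphi$. By flat base change along $k'\llbracket t \rrbracket \to k''\llbracket t \rrbracket$,
\[
	L\varphi^* L_\cZ \otimes_{k'\llbracket t \rrbracket} k''\llbracket t \rrbracket \ \simeq\ L\psi^*(f^*L_\cZ)\ \simeq\ [\psi^*\Omega^1_W \to \psi^*\Omega^1_{W/\cZ}],
\]
which is a two-term complex of \emph{free} $k''\llbracket t \rrbracket$-modules in degrees $0$ and $1$ (note that derived and underived pullback agree on locally free sheaves). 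Hence the base-changed module has $H^i = 0$ for $i \notin \{0,1\}$, and its $H^0$ is a submodule of a free module and thus torsion-free.

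Finally, we descend these properties back to $k'\llbracket t \rrbracket$. Since $k'\llbracket t \rrbracket \to k''\llbracket t \rrbracket$ is faithfully flat, both the vanishing of each $H^i$ for $i \neq 0, 1$ and the torsion-freeness of $H^0$ (equivalently, injectivity of multiplication by $t$) descend from the base change to the original module. The main technical subtlety here is the lifting of the arc $\varphi$ to the atlas after extending the residue field; this is a standard consequence of smoothness and representability of $f$ and is the reason we are allowed to pass through a base change in the first place.
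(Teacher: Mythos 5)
Your proposal is correct and follows essentially the same route as the paper: pass to a smooth atlas, use the triangle $f^*L_\cZ \to \Omega^1_W \to \Omega^1_{W/\cZ}$ to realize $L\varphi^*L_\cZ$ (after lifting the arc to the atlas over a field extension) as a two-term complex of free modules in degrees $0$ and $1$, so that $H^0$ embeds in a free module. The only difference is that you spell out the faithfully flat descent from $k''\llbracket t\rrbracket$ to $k'\llbracket t\rrbracket$, which the paper leaves implicit.
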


\begin{proof}
Let $\xi: Z \to \cZ$ be a smooth cover by a scheme. Possibly after extending $k'$, we may assume there exists some $\psi: \Spec(k'\llbracket t \rrbracket) \to Z$ such that $\xi \circ \psi \cong \varphi$. Applying $L\psi^*$ to the exact triangle
\[
	L\xi^* L_\cZ \to \Omega^1_Z \to \Omega^1_{Z/\cZ},
\]
we have an exact triangle
\[
	L\varphi^*L_\cZ \to \psi^* \Omega^1_Z \to \psi^* \Omega^1_{Z/\cZ}.
\]
The desired result then follows immediately from the fact that $\psi^* \Omega^1_Z$ is free.
\end{proof}

\begin{corollary}\label{cor:hetwithsmoothsource}
Let $\pi: \cX \to \cY$ be a morphism of locally finite type Artin stacks over $k$. If $\cX$ is smooth, then 
\[
	\het_{\cX/\cY} = \het^{(0)}_{L_{\cX/\cY}} - \het^{(1)}_{L_{\cX/\cY}}.
\]
\end{corollary}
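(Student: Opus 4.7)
The plan is to unwind the definition of $\het_{\cX/\cY}$ and show that when $\cX$ is smooth, the third (correction) term vanishes identically. Fix a field extension $k'$ of $k$ and an arc $\varphi \colon \Spec(k'\llbracket t\rrbracket) \to \cX$. By Definition \ref{def:relhet}, off the locus where $\het^{(1)}_{L_{\cX/\cY}}(\varphi)=\infty$, we have
\[
\het_{\cX/\cY}(\varphi) = \het^{(0)}_{L_{\cX/\cY}}(\varphi) - \het^{(1)}_{L_{\cX/\cY}}(\varphi) - \dim_{k'}\coker\bigl(H^0(L\varphi^*L\pi^*L_\cY)_\tor \to H^0(L\varphi^* L_\cX)_\tor\bigr),
\]
so the corollary will follow once we show that the cokernel above is zero for every such $\varphi$ (and that the $\infty$ case is handled automatically by the convention in the definition).

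The key observation is Lemma \ref{lemmaHeightFunctionsOfCotangentComplexOfSmoothStack}: since $\cX$ is smooth, $H^0(L\varphi^* L_\cX)$ is a torsion-free $k'\llbracket t \rrbracket$-module. Consequently its torsion submodule $H^0(L\varphi^* L_\cX)_\tor$ is the zero module. The target of the map
\[
H^0(L\varphi^*L\pi^*L_\cY)_\tor \longrightarrow H^0(L\varphi^* L_\cX)_\tor
\]
is therefore zero, so the cokernel is zero and the correction term vanishes. This immediately gives
\[
\het_{\cX/\cY}(\varphi) = \het^{(0)}_{L_{\cX/\cY}}(\varphi) - \het^{(1)}_{L_{\cX/\cY}}(\varphi)
\]
whenever $\het^{(1)}_{L_{\cX/\cY}}(\varphi) \neq \infty$.

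For the remaining case, if $\het^{(1)}_{L_{\cX/\cY}}(\varphi) = \infty$, then by definition $\het_{\cX/\cY}(\varphi) = \infty$, and the right-hand side $\het^{(0)}_{L_{\cX/\cY}}(\varphi) - \het^{(1)}_{L_{\cX/\cY}}(\varphi)$ is also interpreted as $\infty$ (with the convention $n - \infty = \infty$ in $\Z\cup\{\infty\}$, which is the standard convention in this setup), so the asserted equality of functions $|\sL(\cX)| \to \Z \cup \{\infty\}$ holds. There is essentially no obstacle here; the entire content of the corollary is the torsion-freeness of $H^0(L\varphi^*L_\cX)$ provided by Lemma \ref{lemmaHeightFunctionsOfCotangentComplexOfSmoothStack}.
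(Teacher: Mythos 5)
Your proof is correct and is essentially identical to the paper's one-line argument: both invoke Lemma \ref{lemmaHeightFunctionsOfCotangentComplexOfSmoothStack} to conclude that $H^0(L\varphi^* L_\cX)_\tor$ vanishes, so the correction term is zero. The extra remark about the $\het^{(1)}_{L_{\cX/\cY}}(\varphi)=\infty$ case is a reasonable bit of bookkeeping the paper leaves implicit.
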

\begin{proof}
This is immediate from \autoref{lemmaHeightFunctionsOfCotangentComplexOfSmoothStack} as $H^0(L\varphi^* L_\cX)_\tor$ vanishes.
\end{proof}

We end this section by showing that the relative height function is additive under compositions of morphisms.

\begin{proposition}\label{prop:additiveht-exact-triangle}
Let $\cX\xrightarrow{f}\cY\xrightarrow{g}\cZ$ be morphisms of stack. Assume there is an open substack $\cV\subset\cX$ such that $\cV\to\cY$ and $\cV\to\cZ$ are open immersions. If $\cV$ is smooth, then for all $\varphi\in\sL(\cX)\setminus\sL(\cX\setminus\cV)$, we have
\[
\het_{\cX/\cZ}(\varphi)=\het_{\cX/\cY}(\varphi)+\het_{\cY/\cZ}(f\varphi)
\]
\end{proposition}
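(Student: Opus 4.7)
The plan is to apply the octahedral axiom to the composition $L\varphi^*L(gf)^*L_\cZ \xrightarrow{\alpha} L\psi^*L_\cY \xrightarrow{\beta} L\varphi^* L_\cX$, where $\psi := f\varphi$. This produces a commutative $3 \times 3$ diagram of distinguished triangles whose ``mixed'' triangles are the three standard cotangent triangles for $f$, $g$, and $gf$ (pulled back to $\varphi$ or $\psi$), and whose fourth distinguished triangle is
\[
L\psi^* L_{\cY/\cZ} \to L\varphi^* L_{\cX/\cZ} \to L\varphi^* L_{\cX/\cY}.
\]
Since $\cV$ is open in $\cX$ with $\cV \to \cY$ and $\cV \to \cZ$ open immersions, the three relative cotangent complexes $L_{\cX/\cY}$, $L_{\cY/\cZ}$, $L_{\cX/\cZ}$ restrict to zero on $\cV$. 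Because $\varphi \notin \sL(\cX\setminus\cV)$, the arc factors through $\cV$ on the generic point $\Spec k'\llparenthesis t\rrparenthesis$, so each of the three pulled-back relative cotangent complexes has only torsion cohomology, and $\alpha$, $\beta$, $\beta\alpha$ are all quasi-isomorphisms after inverting $t$.

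The main-term identity follows by Euler-characteristic additivity on the long exact sequence of the fourth triangle above:
\[
\het^{(0)}_{L_{\cX/\cZ}}(\varphi) - \het^{(1)}_{L_{\cX/\cZ}}(\varphi) = \bigl(\het^{(0)}_{L_{\cX/\cY}}(\varphi) - \het^{(1)}_{L_{\cX/\cY}}(\varphi)\bigr) + \bigl(\het^{(0)}_{L_{\cY/\cZ}}(\psi) - \het^{(1)}_{L_{\cY/\cZ}}(\psi)\bigr).
\]
It remains to show additivity of the correction cokernels, i.e., that for $A := H^0(L\varphi^*L(gf)^*L_\cZ)$, $B := H^0(L\psi^*L_\cY)$, and $C := H^0(L\varphi^*L_\cX)$, one has $\dim_{k'}\coker(\beta\alpha|_\tor) = \dim_{k'}\coker(\alpha|_\tor) + \dim_{k'}\coker(\beta|_\tor)$. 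I first establish the non-torsion analogue by a diagram chase: from the long exact sequences of the three cotangent triangles, $\coker(\alpha)$, $\coker(\beta)$, and $\coker(\beta\alpha)$ are identified with kernels of connecting maps into $H^1$, and the octahedral axiom supplies the compatibility squares that let these dimensions add.

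To descend to torsion parts, apply the snake lemma to $0 \to M_\tor \to M \to M/M_\tor \to 0$ for $M \in \{A, B, C\}$. Since $\alpha, \beta, \beta\alpha$ are generic quasi-isomorphisms, the induced maps $\bar\alpha$ and $\bar\beta$ on free quotients are injections of finitely generated free $k'\llbracket t\rrbracket$-modules of equal rank with torsion cokernels; multiplicativity of the determinant over the DVR $k'\llbracket t\rrbracket$ yields $\dim_{k'}\coker(\bar\beta\bar\alpha) = \dim_{k'}\coker(\bar\alpha) + \dim_{k'}\coker(\bar\beta)$. Combining the snake-lemma identity $\dim_{k'}\coker(f_\tor) = \dim_{k'}\coker(f) - \dim_{k'}\coker(\bar f)$ (for each $f \in \{\alpha, \beta, \beta\alpha\}$) with the non-torsion additivity completes the proof. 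The main obstacle is the octahedral diagram chase yielding the non-torsion cokernel additivity.
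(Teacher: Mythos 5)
There is a genuine gap: both of the two sub-identities into which you decompose the statement are false in general, and the actual content of the proposition is that their (nonzero) defects coincide. First, the displayed ``main-term identity'' $\het^{(0)}_{L_{\cX/\cZ}}-\het^{(1)}_{L_{\cX/\cZ}}=(\het^{(0)}_{L_{\cX/\cY}}-\het^{(1)}_{L_{\cX/\cY}})+(\het^{(0)}_{L_{\cY/\cZ}}-\het^{(1)}_{L_{\cY/\cZ}})$ does not follow from the long exact sequence of the transitivity triangle, because the relative cotangent complexes of these (singular, stacky) morphisms have cohomology in negative degrees: the long exact sequence continues through $H^{-1}(L\varphi^*L_{\cX/\cZ})\to H^{-1}(L\varphi^*L_{\cX/\cY})\to H^0(L\psi^*L_{\cY/\cZ})$, and truncating at degree $0$ leaves a defect equal to $-\dim_{k'}\coker\bigl(H^{-1}(L\varphi^*L_{\cX/\cZ})\to H^{-1}(L\varphi^*L_{\cX/\cY})\bigr)$, which is generally nonzero. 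The quantity $h^0-h^1$ is not an Euler characteristic of the pulled-back complex. Second, the claimed additivity $\dim\coker((\beta\alpha)_\tor)=\dim\coker(\alpha_\tor)+\dim\coker(\beta_\tor)$ also fails: already when $\cX$ is smooth, $C=H^0(L\varphi^*L_\cX)$ is torsion-free (\autoref{lemmaHeightFunctionsOfCotangentComplexOfSmoothStack}), so the left side and $\dim\coker(\beta_\tor)$ vanish, while $\dim\coker(\alpha_\tor)$ is exactly the correction term in $\het_{\cY/\cZ}(f\varphi)$ and is nonzero whenever the arc meets the singular locus of $\cY$ (this nonvanishing is the entire reason the correction term appears in \autoref{def:relhet}). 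Your determinant/snake-lemma bookkeeping correctly reduces torsion-cokernel additivity to additivity of kernel lengths, $\ell(\ker\beta\alpha)=\ell(\ker\alpha)+\ell(\ker\beta)$, but that requires $\ker\beta\subset\operatorname{im}\alpha$, which is not automatic and is in fact false in the same situations.

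What actually makes the proposition true is that the two defects cancel: the paper shows (for smooth $\cX$) that $H^{-1}(L\varphi^*L_{\cX/\cZ})$ and $H^{-1}(L\varphi^*L_{\cX/\cY})$ are precisely the torsion submodules $A_\tor$ and $B_\tor$ (using that $H^0(L\varphi^*L_\cX)$ is torsion-free and that the $H^{-1}$'s are torsion because $\varphi\notin\sL(\cX\setminus\cV)$), so the degree-$(-1)$ cokernel in the long exact sequence \emph{is} the correction term of $\het_{\cY/\cZ}(f\varphi)$. The general case is then reduced to the smooth case by composing with a representable resolution $\widetilde{\cX}\to\cX$ (\autoref{propositionResOfSingForStacks}) and applying the smooth-source identity three times. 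Your proposal is missing both the identification of the $H^{-1}$-cokernel with the torsion correction term and the reduction to the smooth case; as written, the two halves of your argument assert that two generally nonzero quantities each vanish.
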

\begin{proof}
We begin by reducing to the case where $\cX$ is smooth. When $\cX$ is not smooth, by \autoref{propositionResOfSingForStacks}, we may choose a representable resolution of singularities $\rho\colon\widetilde{\cX}\to\cX$ with $\rho|_\cV$ an isomorphism. Then $\varphi$ lifts to an arc $\widetilde{\varphi}$ of $\widetilde{X}$. We then have
\begin{align*}
\het_{\cX/\cY}(\varphi) & = \het_{\widetilde{\cX}/\cY}(\widetilde{\varphi}) - \het_{\widetilde{\cX}/\cX}(\widetilde{\varphi})\\
\het_{\cX/\cZ}(\varphi) & = \het_{\widetilde{\cX}/\cZ}(\widetilde{\varphi}) - \het_{\widetilde{\cX}/\cX}(\widetilde{\varphi})\\
\het_{\cY/\cZ}(f\varphi) & = \het_{\widetilde{\cX}/\cZ}(\widetilde{\varphi}) - \het_{\widetilde{\cX}/\cY}(\widetilde{\varphi})
\end{align*}
from which the result follows.

It remains to handle the case when $\cX$ is smooth. By \autoref{cor:hetwithsmoothsource}, we must show 
\[
	-\het_{\cY / \cZ} (\sL(f)(\varphi)) = -\het^{(0)}_{L_{\cX/\cZ}}(\varphi) + \het^{(1)}_{L_{\cX/\cZ}}(\varphi) + \het^{(0)}_{L_{\cX/\cY}}(\varphi)-\het^{(1)}_{L_{\cX/\cY}}(\varphi).
\]
Consider the exact triangle
\[
	Lf^*L_{\cY / \cZ} \to L_{\cX / \cZ} \to L_{\cX / \cY}.
\]
Pulling back along $\varphi$ we get the exact triangle
\[
	L\varphi^*Lf^*L_{\cY/\cZ} \to L\varphi^*L_{\cX/\cZ} \to L\varphi^* L_{\cX/\cY},
\]
which gives the exact sequence
\begin{align*}
	&H^{-1}(L\varphi^*L_{\cX/\cZ}) \to H^{-1}(L\varphi^* L_{\cX/\cY}) \\
	&\to H^0(L\varphi^*Lf^*L_{\cY/\cZ}) \to H^{0}(L\varphi^*L_{\cX/\cZ}) \to H^{0}(L\varphi^* L_{\cX/\cY})\\
	&\to H^1(L\varphi^*Lf^*L_{\cY/\cZ}) \to H^{1}(L\varphi^*L_{\cX/\cZ}) \to H^{1}(L\varphi^*L_{\cX/\cY}) \to 0.
\end{align*}
The restrictions of $Lf^*L_{\cY / \cZ}, L_{\cX / \cZ}$, and $L_{\cX / \cY}$ to $\cV$ are trivial and $\varphi$ is not in $\sL(\cX \setminus \cV)$, so these $k'\llbracket t \rrbracket$-modules are finite dimensional over $k'$. Thus
\begin{align*}
	0 = &\dim_{k'}\coker\left( H^{-1}(L\varphi^*L_{\cX/\cZ}) \to H^{-1}(L\varphi^* L_{\cX/\cY}) \right)\\
	&- \het^{(0)}_{L_{\cY/\cZ}}(\sL(f)(\varphi)) + \het^{(0)}_{L_{\cX/\cZ}}(\varphi) - \het^{(0)}_{L_{\cX/\cY}}(\varphi)\\
	&+\het^{(1)}_{L_{\cY/\cZ}}(\sL(f)(\varphi)) - \het^{(1)}_{L_{\cX/\cZ}}(\varphi)+\het^{(1)}_{L_{\cX/\cY}}(\varphi).
\end{align*}
We are therefore done if we can prove that
\begin{align*}
	\dim_{k'}&\coker\left( H^{-1}(L\varphi^*L_{\cX/\cZ}) \to H^{-1}(L\varphi^* L_{\cX/\cY}) \right) \\
	&= \dim_{k'}\coker\left(H^0(L\varphi^*Lf^*Lg^*L_\cZ)_\tor \to H^0(L\varphi^*Lf^* L_\cY)_\tor\right).
\end{align*}
Consider the commuting diagram
\begin{center}
\begin{tikzcd}
Lf^*Lg^*L_\cZ \arrow[r] \arrow[d] & L_\cX \arrow[equal,d] \arrow[r] &L_{\cX/\cZ} \arrow[d]\\
Lf^*L_\cY \arrow[r] & L_\cX \arrow[r] &L_{\cX/\cY}
\end{tikzcd}
\end{center}
whose rows are exact triangles. Noting that $H^{-1}(L\varphi^*L_\cX) = 0$ by \autoref{lemmaHeightFunctionsOfCotangentComplexOfSmoothStack}, pulling back along $\varphi$ and taking cohomology gives the commuting diagram
\begin{center}
\begin{tikzcd}
0 \arrow[r] \arrow[equal, d] & H^{-1}(L\varphi^*L_{\cX/\cZ}) \arrow[r] \arrow[d] & H^0 (L\varphi^*Lf^*Lg^*L_\cZ) \arrow[d] \arrow[r] & H^0 (L\varphi^* L_\cX) \arrow[equal, d]\\
0 \arrow[r] & H^{-1}(L\varphi^*L_{\cX/\cY}) \arrow[r] & H^0 (L\varphi^*Lf^*L_\cY) \arrow[r] & H^0 (L\varphi^* L_\cX)
\end{tikzcd}
\end{center}
whose rows are exact. Recall that because $\varphi$ is not in $\sL(\cX \setminus \cV)$, the $k'\llbracket t \rrbracket$-modules  $H^{-1}(L\varphi^*L_{\cX/\cZ})$ and $H^{-1}(L\varphi^* L_{\cX/\cY})$ are torsion. Also the $k'\llbracket t \rrbracket$-module $H^0 (L\varphi^* L_\cX)$ is torsion-free by \autoref{lemmaHeightFunctionsOfCotangentComplexOfSmoothStack}. Thus the middle horizontal arrows are just the inclusions of the respective torsion submodules. The commutativity of the diagram then implies that the induced map
\begin{align*}
	\coker&\left( H^{-1}(L\varphi^*L_{\cX/\cZ}) \to H^{-1}(L\varphi^* L_{\cX/\cY}) \right)\\
	&\xrightarrow{\simeq} \coker\left(H^0(L\varphi^*Lf^*Lg^*L_\cZ)_\tor \to H^0(L\varphi^*Lf^* L_\cY)_\tor\right),
\end{align*}
is an isomorphism.
\end{proof}

\section{A variant on \autoref{theoremMotivicChangeOfVariablesMeasurable}}

Our goal in this section is to prove the following motivic change of variables formula.

\begin{theorem}\label{theoremMCVFCylinder}
Let $\cX$ be a locally finite type Artin stack over $k$ with affine geometric stabilizers and separated diagonal, let $Y$ be an irreducible finite type scheme over $k$, let $\pi: \cX \to Y$ be a morphism, let $\cU$ be a smooth open substack of $\cX$ such that $\cU \hookrightarrow \cX \xrightarrow{\pi} Y$ is an open immersion, let $\cC \subset |\sL(\cX)|$ be a cylinder that is disjoint from $|\sL(\cX \setminus \cU)|$, and let $D \subset \sL(Y)$ be a cylinder such that $\pi$ induces a bijection $\overline{\cC}(k') \to D(k')$ for every field extension $k'$ of $k$.

\begin{enumerate}[(a)]

\item The restriction of $\het_{\cX/Y}$ to $\cC$ is integer valued and takes only finitely many values.

\item For all $n \in \Z$, the set $\het_{\cX / Y}^{-1}(n) \cap \cC$ is a $\dim Y$-convergent cylinder in $|\sL(\cX)|$.

\item We have an equality
\[
	\mu_Y(D) = \int_\cC \bL^{-\het_{\cX/Y}} \diff\mu_{\cX, \dim Y}.
\]

\end{enumerate}
\end{theorem}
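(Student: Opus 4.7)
The plan is to reduce to the smooth-source case via a representable resolution of singularities and then combine \autoref{theoremRepresentableChangeOfVariables} with the smooth motivic change of variables formula of \cite[Theorem 1.3]{SatrianoUsatine2}, glued together through the height additivity of \autoref{prop:additiveht-exact-triangle}. As a preliminary reduction, I would replace $\cX$ by the scheme-theoretic image $\cX'$ of $\cU\hookrightarrow\cX$: by \autoref{lemmaNotArcOfBoundaryImpliesTruncationInClosure} we still have $\cC\subset|\sL(\cX')|$, and since $\cU$ is quasi-compact, reduced, and irreducible (as an open of the irreducible $Y$), the stack $\cX'$ is integral and finite type. In particular $\cX$ becomes equidimensional and $\cC$ becomes a small cylinder. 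Next, \autoref{propositionResOfSingForStacks} provides an irreducible smooth finite type Artin stack $\widetilde{\cX}$ and a proper representable morphism $\rho\colon\widetilde{\cX}\to\cX$ that is an isomorphism over $\cU$ with $\rho^{-1}(\cU)$ dense in $\widetilde{\cX}$; in particular $\dim\widetilde{\cX}=\dim\cU=\dim Y$. Setting $\widetilde{\cC}:=\sL(\rho)^{-1}(\cC)$, \autoref{theoremRepresentableChangeOfVariables}(a) yields that $\widetilde{\cC}$ is a cylinder disjoint from $|\sL(\widetilde{\cX}\setminus\rho^{-1}(\cU))|$ and that $\sL(\rho)$ induces bijections $\overline{\widetilde{\cC}}(k')\to\overline{\cC}(k')$ for every field extension $k'/k$.

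For (a), I would combine \cite[Remark 3.5, Proposition 2.2]{SatrianoUsatine2} with \autoref{cor:hetwithsmoothsource} to see that $\het_{\widetilde{\cX}/Y}$ is integer valued and takes finitely many values on $\widetilde{\cC}$, and combine \autoref{theoremRepresentableChangeOfVariables}(b) with the vanishing of $\het^{(1)}_{L_{\widetilde{\cX}/\cX}}$ (from representability of $\rho$) and of the torsion correction (from smoothness of $\widetilde{\cX}$, via \autoref{cor:hetwithsmoothsource}) to deduce the analogous statement for $\het_{\widetilde{\cX}/\cX}=\het^{(0)}_{L_{\widetilde{\cX}/\cX}}$. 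The height additivity
\[
\het_{\widetilde{\cX}/Y}(\widetilde{\varphi})=\het_{\widetilde{\cX}/\cX}(\widetilde{\varphi})+\het_{\cX/Y}(\sL(\rho)(\widetilde{\varphi})),\qquad\widetilde{\varphi}\in\widetilde{\cC},
\]
from \autoref{prop:additiveht-exact-triangle}, combined with the bijectivity of $\sL(\rho)$ on $\widetilde{\cC}$, then yields (a).

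For (b), set $\cC^{(n)}:=\cC\cap\het_{\cX/Y}^{-1}(n)$ and $\widetilde{\cC}^{(n)}:=\sL(\rho)^{-1}(\cC^{(n)})$. The additivity formula identifies $\widetilde{\cC}^{(n)}$ with $\widetilde{\cC}\cap(\het_{\widetilde{\cX}/Y}-\het_{\widetilde{\cX}/\cX})^{-1}(n)$, which is a cylinder by the constructibility just used. Applying \autoref{propositionImageOfCylinderIsCylinder} to $\rho$ shows that $\cC^{(n)}=\sL(\rho)(\widetilde{\cC}^{(n)})$ is a small cylinder in $|\sL(\cX)|$. Then \autoref{theoremRepresentableChangeOfVariables}(c) applied to $\rho$ with cylinder $\cC^{(n)}$, using $\dim\widetilde{\cX}=\dim Y$, yields
\[
\mu_{\cX,\dim Y}(\cC^{(n)})=\int_{\widetilde{\cC}^{(n)}}\bL^{-\het_{\widetilde{\cX}/\cX}}\diff\mu_{\widetilde{\cX}},
\]
establishing $\dim Y$-convergence of $\cC^{(n)}$ and hence (b).

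For (c), let $D^{(n)}:=\sL(\pi)(\cC^{(n)})$. Since $\rho^{-1}(\cU)\hookrightarrow\widetilde{\cX}\xrightarrow{\pi\circ\rho} Y$ is an open immersion, \autoref{propositionImageOfCylinderIsCylinder} applied to $\pi\circ\rho$ gives that $D^{(n)}$ is a small cylinder in $\sL(Y)$, and chaining the two bijections on $k'$-points yields a bijection $\overline{\widetilde{\cC}^{(n)}}(k')\to D^{(n)}(k')$. Applying the smooth motivic change of variables formula \cite[Theorem 1.3]{SatrianoUsatine2} to $\pi\circ\rho\colon\widetilde{\cX}\to Y$ on $\widetilde{\cC}^{(n)}$, together with \autoref{cor:hetwithsmoothsource}, gives
\[
\mu_Y(D^{(n)})=\int_{\widetilde{\cC}^{(n)}}\bL^{-\het_{\widetilde{\cX}/Y}}\diff\mu_{\widetilde{\cX}}=\bL^{-n}\int_{\widetilde{\cC}^{(n)}}\bL^{-\het_{\widetilde{\cX}/\cX}}\diff\mu_{\widetilde{\cX}}=\bL^{-n}\mu_{\cX,\dim Y}(\cC^{(n)}),
\]
where the middle equality uses height additivity on $\widetilde{\cC}^{(n)}$ and the last uses the computation above. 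Summing over the finitely many $n$ with $\cC^{(n)}\neq\emptyset$, and using the bijectivity hypothesis $\overline{\cC}(k')\to D(k')$ to decompose $D=\bigsqcup_n D^{(n)}$, yields $\mu_Y(D)=\int_\cC\bL^{-\het_{\cX/Y}}\diff\mu_{\cX,\dim Y}$. The main obstacle will be the careful verification of the smoothness, boundary-disjointness, open-immersion, and bijectivity hypotheses at each application of \autoref{propositionImageOfCylinderIsCylinder}, \autoref{theoremRepresentableChangeOfVariables}, and the smooth change of variables formula; with these in hand, the proof reduces to bookkeeping combining the representable and smooth change of variables formulas through \autoref{prop:additiveht-exact-triangle}.
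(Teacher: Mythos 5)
Your strategy coincides with the paper's: resolve representably via \autoref{propositionResOfSingForStacks}, feed the resolution into \autoref{theoremRepresentableChangeOfVariables} and the smooth change of variables formula of \cite[Theorem 1.3]{SatrianoUsatine2}, and glue the two through \autoref{prop:additiveht-exact-triangle}; your single-index decomposition by level sets of $\het_{\cX/Y}$ is just a repackaging of the paper's bi-indexed $\cC^{(n,m)}$, and parts (b) and (c) go through as you describe once the setup is in place.

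The gap is in your opening reduction ``replace $\cX$ by the scheme-theoretic image $\cX'$ of $\cU$.'' Two problems. First, finite-typeness of $\cX'$ is not automatic: the closure of a quasi-compact open in a merely locally finite type stack need not be quasi-compact, so you must first shrink $\cX$ to a quasi-compact open substack containing $\theta_0(\cC)$ and $\cU$ --- and to know this is possible you need $\cC$ to be small, which the paper deduces from \autoref{lemmaBijectionAutomaticallyMakesCylinderSmall} using that $D$ is small (as $Y$ is finite type); your proposal derives smallness of $\cC$ from finite-typeness of $\cX'$, which is circular. Second, and more substantively, the theorem is a statement about $\het_{\cX/Y}$ and about cylinders and $\mu_{\cX,\dim Y}$ in $|\sL(\cX)|$, whereas after the replacement you prove statements about $\het_{\cX'/Y}$ and about $|\sL(\cX')|$. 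The relative height function is not \emph{a priori} insensitive to the ambient stack: by \autoref{prop:additiveht-exact-triangle} applied to $\cX'\xrightarrow{i}\cX\to Y$, the discrepancy on an arc $\varphi$ of $\cC$ is $\het_{\cX'/\cX}(\varphi)=-\dim_{k'}\coker\bigl(H^0(L\varphi^*Li^*L_{\cX})_\tor\to H^0(L\varphi^*L_{\cX'})_\tor\bigr)$, and a surjection of $k'\llbracket t\rrbracket$-modules need not surject on torsion submodules. This term does vanish here --- the map is surjective with torsion kernel, and both modules have generic rank $\dim Y$ because $\varphi$ generically factors through $\cU$, so a snake-lemma argument kills the cokernel on torsion --- but that verification is exactly the kind of subtlety the torsion correction in \autoref{def:relhet} was introduced to track, and it cannot be waved through. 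You would likewise need to check that the $\cC^{(n)}$, produced by \autoref{propositionImageOfCylinderIsCylinder} as cylinders of $|\sL(\cX')|$, are cylinders of $|\sL(\cX)|$ (the paper does this by writing $\cC^{(n,m)}=\theta_{\ell,\cX}^{-1}(\cC_\ell)\cap\cC$). The paper avoids all of this by never replacing $\cX$: it applies \autoref{prop:additiveht-exact-triangle} and \autoref{theoremRepresentableChangeOfVariables} to $\cZ\to\cX\to Y$ with the original $\cX$, using $\cX'$ only as a stepping stone to build the resolution.
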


We turn to the proof after a preliminary result.

\begin{lemma}
\label{lemmaBijectionAutomaticallyMakesCylinderSmall}
Let $\cX$ and $\cY$ be locally finite type Artin stacks over $k$, let $\pi: \cX \to \cY$ be a morphism, let $\cU$ be a smooth open substack of $\cX$ such that $\cU \hookrightarrow \cX \xrightarrow{\pi} \cY$ is an open immersion, let $\cC \subset |\sL(\cX)|$ be a cylinder that is disjoint from $|\sL(\cX \setminus \cU)|$, let $\cD \subset |\sL(\cY)|$ be a cylinder, assume that $\cY$ is equidimensional, and assume that $\pi$ induces a bijection $\overline{\cC}(k') \to \overline{\cD}(k')$ for every field extension $k'$ of $k$. Then $\cC$ is small if and only if $\cD$ is small.
\end{lemma}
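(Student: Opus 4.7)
The plan is to prove both directions using \autoref{propositionImageOfCylinderIsCylinder} (image of a small cylinder is a small cylinder) combined with the bijection hypothesis. The key preliminary observation is that, since topological classes in $|\sL(\cY)|$ are realized by $k'$-valued points for varying $k'$, the bijection $\overline{\cC}(k') \to \overline{\cD}(k')$ for every $k'$ forces $\sL(\pi)(\cC) = \cD$ as subsets of $|\sL(\cY)|$.

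For the forward direction ($\cC$ small $\Rightarrow$ $\cD$ small), \autoref{propositionImageOfCylinderIsCylinder} applies directly (using that $\cY$ is equidimensional) and shows that $\sL(\pi)(\cC) = \cD$ is a small cylinder, so $\cD$ is small.

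For the backward direction ($\cD$ small $\Rightarrow$ $\cC$ small), the plan is as follows. Cover $\cX$ by quasi-compact open substacks $\{\cX_\alpha\}_\alpha$ (which exists since $\cX$ is locally of finite type), and set $\cC_\alpha := \cC \cap \theta_0^{-1}(|\cX_\alpha|)$. Each $\cC_\alpha$ is a small cylinder disjoint from $|\sL(\cX\setminus\cU)|$, so \autoref{propositionImageOfCylinderIsCylinder} yields that each $\sL(\pi)(\cC_\alpha)$ is a small cylinder. Since the $\cC_\alpha$ cover $\cC$, the $\sL(\pi)(\cC_\alpha)$ cover $\sL(\pi)(\cC) = \cD$; then \autoref{propositionSmallCylinderFiniteSubcover}, applied to the small cylinder $\cD$ and its cover by cylinders, extracts a finite subcover
\[
	\cD \;\subset\; \sL(\pi)(\cC_{\alpha_1}) \cup \cdots \cup \sL(\pi)(\cC_{\alpha_m}).
\]
I will then use the bijection hypothesis a second time to lift this to an inclusion $\cC \subset \cC_{\alpha_1} \cup \cdots \cup \cC_{\alpha_m}$: for any topological class $[\varphi] \in \cC$, the class $[\pi\varphi]$ lies in some $\sL(\pi)(\cC_{\alpha_j})$, so there is $[\varphi'] \in \cC_{\alpha_j}$ with $[\pi\varphi'] = [\pi\varphi]$; passing to a sufficiently large common field extension $k'$ identifies $\pi(\varphi\otimes k')$ and $\pi(\varphi'\otimes k')$ up to isomorphism, and injectivity of $\overline{\cC}(k') \to \overline{\cD}(k')$ forces $[\varphi] = [\varphi']$, which lies in $\cC_{\alpha_j}$. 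The finite union $\bigcup_j \cC_{\alpha_j}$ has $\theta_0$-image in the quasi-compact set $\bigcup_j |\cX_{\alpha_j}|$, so $\cC$ is small.

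The main subtlety I anticipate lies in the last step: the bijection is given at each fixed $k'$, so one must combine injectivity across varying field extensions to upgrade the inclusion of $\cY$-arcs to an inclusion of topological classes in $|\sL(\cX)|$. Everything else consists of bookkeeping with cylinders and invocations of the previously established \autoref{propositionImageOfCylinderIsCylinder} and \autoref{propositionSmallCylinderFiniteSubcover}.
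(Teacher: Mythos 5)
Your proposal is correct and follows essentially the same route as the paper's proof: cover $\cC$ by the small cylinders $\cC\cap\theta_0^{-1}(|\cX_\alpha|)$, push forward via \autoref{propositionImageOfCylinderIsCylinder}, extract a finite subcover of $\cD$ via \autoref{propositionSmallCylinderFiniteSubcover}, and pull back using injectivity of $\overline{\cC}(k')\to\overline{\cD}(k')$. The only (immaterial) difference is in the forward direction, where the paper simply notes that $\theta_0(\cD)$ is the continuous image of $\theta_0(\cC)$ rather than invoking \autoref{propositionImageOfCylinderIsCylinder}.
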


\begin{proof}
Because $\theta_0(\cD)$ is the image of $\theta_0(\cC)$ under the continuous map $|\cX| \to |\cY|$, it is clear that if $\cC$ is small then $\cD$ is small.

For the other direction, assume that $\cD$ is small. For every $\varphi \in \cC$ choose a quasi-compact open substack $\cX_\varphi$ of $\cX$ such that $\theta_0(\varphi) \in |\cX_\varphi|$, and set $\cC_\varphi = \cC \cap \theta_0^{-1}(|\cX_\varphi|)$ and $\cD_\varphi = \sL(\pi)(\cC_\varphi)$. Then $\cC = \bigcup_{\varphi \in \cC} \cC_\varphi$ and each $\cC_\varphi$ is a small cylinder that is disjoint from $|\sL(\cX \setminus \cU)|$. So, $\cD = \bigcup_{\varphi \in \cC} \cD_\varphi$ as $\overline{\cC}(k') \to \overline{\cD}(k')$ is surjective, and each $\cD_\varphi$ is a small cylinder by \autoref{propositionImageOfCylinderIsCylinder}. By \autoref{propositionSmallCylinderFiniteSubcover}, there exist $\varphi_1, \dots, \varphi_r \in \cC$ such that $\cD = \cD_{\varphi_1} \cup \dots \cup \cD_{\varphi_r}$. Because $\overline{\cC}(k') \to \overline{\cD}(k')$ is injective for every field extension $k'$ of $k$, it is straightforward to check that $\cC = \cC_{\varphi_1} \cup \dots \cup \cC_{\varphi_r}$, which is small.
\end{proof}

\begin{proof}[{Proof of \autoref{theoremMCVFCylinder}}]
Because the theorem is trivially true when $\cU$ is empty, we will assume that $\cU$ is nonempty. In particular $\cU$ is integral and $\dim\cU = \dim Y$.

Because $Y$ is finite type, $D$ is a small, so $\cC$ is small by \autoref{lemmaBijectionAutomaticallyMakesCylinderSmall}. Because $Y$ is finite type and $\cU$ is isomorphic to an open subscheme of $Y$, we have $\cU$ is quasi-compact. Replacing $\cX$ with a quasi-compact open substack that contains $\theta_0(\cC)$ and $\cU$ and noting \autoref{remarkHeightFunctionCompatibleWithOpenSubstack}, we may assume that $\cX$ is finite type over $k$.

Let $\cX'$ be the scheme theoretic image of $\cU \hookrightarrow \cX$. Then $\cX'$ is integral and finite type over $k$, so by \autoref{propositionResOfSingForStacks} there exists an irreducible smooth finite type Artin stack $\cZ$ over $k$ and a morphism $\rho': \cZ \to \cX'$ that is proper and representable by schemes and such that $\rho'^{-1}(\cU) \to \cU$ is an isomorphism. Let $\rho: \cZ \to \cX$ be the composition $\cZ \xrightarrow{\rho'} \cX' \hookrightarrow \cX$, and set $\cE = \sL(\rho)^{-1}(\cC)$. Noting \autoref{lemmaNotArcOfBoundaryImpliesTruncationInClosure}, we have $\cX, \cZ, \rho, \cU, \cC, \cE$ satisfy the hypotheses of \autoref{theoremRepresentableChangeOfVariables}. By \autoref{theoremRepresentableChangeOfVariables}(\ref{theoremPartCylinderBijection}), the hypotheses of \cite[Theorem 1.3]{SatrianoUsatine2} hold, hence combining with \cite[Theorem 2.3]{SatrianoUsatine3}, we see $-\het^{(0)}_{L_{\cZ/Y}} + \het^{(1)}_{L_{\cZ/Y}}$ is integer valued and constructible on $\cE$ and
\[
	\mu_Y(D) = \int_{\cE} \bL^{-\het^{(0)}_{L_{\cZ/Y}} + \het^{(1)}_{L_{\cZ/Y}}}\diff\mu_\cZ.
\]
For each $n, m \in \Z$ set
\[
	\cE^{(n, m)} = \cE \cap (-\het^{(0)}_{L_{\cZ/Y}} + \het^{(1)}_{L_{\cZ/Y}})^{-1}(n) \cap (\het^{(0)}_{L_{\cZ/\cX}})^{-1}(m)
\]
and
\[
	\cC^{(n,m)} = \sL(\rho)(\cE^{(n,m)}).
\]
We have each $\cE^{(n,m)}$ is a cylinder, so each $\cC^{(n,m)}$ is a cylinder in $|\sL(\cX')|$ by \autoref{propositionImageOfCylinderIsCylinder}. 

We will show that each $\cC^{(n,m)}$ is a cylinder in $|\sL(\cX)|$. There exists some $\ell \in \Z_{\geq 0}$ and constructible subset $\cC_\ell \subset |\sL_\ell(\cX')|$ such that 
\[
	\cC^{(n,m)} = \theta_{\ell, \cX'}^{-1}(\cC_\ell) = \theta_{\ell, \cX}^{-1}(\cC_\ell) \cap |\sL(\cX')|.
\]
Intersecting with $\cC$ on both sides and noting $\cC \subset  |\sL(\cX')|$ and $\cC^{(n,m)} \subset \cC$,
\[
	\cC^{(n,m)} = \theta_{\ell, \cX}^{-1}(\cC_\ell) \cap \cC,
\]
which is a cylinder in $|\sL(\cX)|$.

Now for all $n,m$, we have $\cX, \cZ, \rho, \cU, \cC^{(n,m)}, \cE^{(n,m)}$ satisfy the hypotheses of \autoref{theoremRepresentableChangeOfVariables}, so by \autoref{theoremRepresentableChangeOfVariables}(\ref{theoremPartIntegralOfMCVF}) each $\cC^{(n,m)}$ is $\dim Y$-convergent and
\[
	\mu_{\cX, \dim Y}(\cC^{(n,m)}) = \int_{\cE^{(n,m)}} \bL^{-\het^{(0)}_{L_{\cZ/\cX}}}\diff\mu_\cZ = \bL^{-m}\mu_\cZ(\cE^{(n,m)}).
\]
Therefore
\[
	\mu_Y(D) = \sum_{n,m \in \Z} \bL^n \mu_\cZ(\cE^{(n,m)}) = \sum_{n,m \in \Z} \bL^{n+m} \mu_{\cX, \dim Y}(\cC^{(n,m)}).
\]
Thus we are done if we can show that for all $n, m \in \Z$, the function $-\het_{\cX / Y}$ is equal to $n + m$ on all of $\cC^{(n,m)}$. This follows since
\[
	-\het_{\cX / Y} (\sL(\rho)(\psi)) = -\het^{(0)}_{L_{\cZ/Y}}(\psi) + \het^{(1)}_{L_{\cZ/Y}}(\psi) + \het^{(0)}_{L_{\cZ/\cX}}(\psi),
\]
by \autoref{cor:hetwithsmoothsource} and \autoref{prop:additiveht-exact-triangle} applied to $\cZ\to\cX\to Y$.
\end{proof}

\section{Motivic change of variables formula:~proof of \autoref{theoremMotivicChangeOfVariablesMeasurable}}

Throughout this subsection, let $\cX, Y, \pi, U, \cU, \cC$ be as in the hypotheses of \autoref{theoremMotivicChangeOfVariablesMeasurable}, and set $d = \dim Y$. 

We first prove our desired result for measurable sets contained in a cylinder.

\begin{lemma}
\label{lemmaMCVFMeasurableInsideCylinder}
Let $D \subset \sL(Y)$ be a measurable set, let $E$ be a cylinder contained in $\sL(Y) \setminus \sL(Y \setminus U)$, and assume $D \subset E$. Then the restriction of $\het_{\cX/Y}$ to $\cC \cap \sL(\pi)^{-1}(D)$ is integer valued and takes only finitely many values, $\bL^{-\het_{\cX/Y}}$ is $d$-integrable on $\cC \cap \sL(\pi)^{-1}(D)$, and
\[
	\mu_Y(D) = \int_{\cC \cap \sL(\pi)^{-1}(D)} \bL^{-\het_{\cX/Y}} \diff\mu_{\cX, d}.
\]
\end{lemma}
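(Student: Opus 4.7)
My plan is to reduce the measurable case to the cylinder case handled by \autoref{theoremMCVFCylinder}, by partitioning $\cC \cap \sL(\pi)^{-1}(E)$ according to the (finitely many) values of $\het_{\cX/Y}$ and lifting cylindrical approximations of $D$ level by level. The first step is to observe that $\cC_E := \cC \cap \sL(\pi)^{-1}(E)$ is contained in $\cC \setminus |\sL(\cX \setminus \cU)|$. Indeed, since $\Spec(k'\llbracket t \rrbracket)$ is reduced and $\cU = \pi^{-1}(U)$, any arc of $\cX$ factoring through $\cX \setminus \cU$ maps to an arc factoring through $Y \setminus U$, so $\sL(\pi)(\cC \cap |\sL(\cX \setminus \cU)|) \subset \sL(Y \setminus U)$, which is disjoint from $E$. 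Combined with the bijection hypothesis of \autoref{theoremMotivicChangeOfVariablesMeasurable}, $\sL(\pi)$ induces a bijection $\overline{\cC_E}(k') \to E(k')$ for every $k'$. Applying \autoref{theoremMCVFCylinder} to $(\cC_E, E)$, I would obtain that $\het_{\cX/Y}|_{\cC_E}$ takes only finitely many integer values $n_1, \ldots, n_r$ and that each level set $\cC'_i := \het_{\cX/Y}^{-1}(n_i) \cap \cC_E$ is a $d$-convergent cylinder. By \autoref{propositionImageOfCylinderIsCylinder}, each $E_i := \sL(\pi)(\cC'_i)$ is a small cylinder in $|\sL(Y)|$, and the bijection restricts to $\overline{\cC'_i}(k') \to E_i(k')$. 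Applying \autoref{theoremMCVFCylinder} to $(\cC'_i, E_i)$ yields $\mu_Y(E_i) = \bL^{-n_i} \mu_{\cX,d}(\cC'_i)$.

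Next, fix $i$ and set $D_i := D \cap E_i$ (measurable as the intersection of the measurable set $D$ with the cylinder $E_i$) and $\cC_{D,i} := \cC'_i \cap \sL(\pi)^{-1}(D_i)$. Given $\varepsilon > 0$, I would take a cylindrical $(\varepsilon \cdot e^{n_i})$-approximation $(D_i^{(0)}, \{D_i^{(j)}\}_{j \in J})$ of $D_i$ in $\sL(Y)$ and replace each cylinder by its intersection with $E_i$, which only shrinks the relevant symmetric differences since $D_i \subset E_i$. Setting $\cC_i^{(0)} := \cC'_i \cap \sL(\pi)^{-1}(D_i^{(0)})$ and $\cC_i^{(j)} := \cC'_i \cap \sL(\pi)^{-1}(D_i^{(j)})$ produces cylinders disjoint from $|\sL(\cX \setminus \cU)|$ and in bijection on isomorphism classes of $k'$-points with their downstairs counterparts. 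By \autoref{theoremMCVFCylinder} each of these is $d$-convergent with $\mu_{\cX,d}(\cC_i^{(\ast)}) = \bL^{-n_i} \mu_Y(D_i^{(\ast)})$. Since the bijection commutes with the set-theoretic operations defining cylindrical approximations, and $\Vert \bL^{-n_i} \alpha \Vert = e^{-n_i} \Vert \alpha \Vert$ for all $\alpha \in \widehat{\sM}_k$ (as $\bL^{-n_i}$ is a unit), the lifted family is a $d$-convergent cylindrical $\varepsilon$-approximation of $\cC_{D,i}$. This shows $\cC_{D,i}$ is $d$-measurable with $\mu_{\cX,d}(\cC_{D,i}) = \bL^{-n_i} \mu_Y(D_i)$.

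Finally, disjointness of the $\cC'_i$'s in $|\sL(\cX)|$ together with the bijection $\overline{\cC_E}(k') \to E(k')$ give $E = \bigsqcup_i E_i$, hence $D = \bigsqcup_i D_i$ and $\cC \cap \sL(\pi)^{-1}(D) = \bigsqcup_i \cC_{D,i}$ with $\het_{\cX/Y} \equiv n_i$ on $\cC_{D,i}$. The first two assertions of the lemma follow immediately, and summing yields
\[
\int_{\cC \cap \sL(\pi)^{-1}(D)} \bL^{-\het_{\cX/Y}} \diff \mu_{\cX,d} \;=\; \sum_{i=1}^{r} \bL^{-n_i} \mu_{\cX,d}(\cC_{D,i}) \;=\; \sum_{i=1}^{r} \mu_Y(D_i) \;=\; \mu_Y(D).
\]
The main obstacle I anticipate is the verification that the lifted family constructed in the middle paragraph is a valid $\varepsilon$-approximation upstairs; this rests on the constancy of $\het_{\cX/Y}$ on each $\cC'_i$, so that measures scale by the fixed unit $\bL^{-n_i}$, together with the bijection $\overline{\cC'_i}(k') \leftrightarrow E_i(k')$ commuting with the intersections and set differences used to control the symmetric differences.
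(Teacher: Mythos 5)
Your proof is correct, and its core strategy is the same as the paper's: stratify by the (finitely many, by \autoref{theoremMCVFCylinder}) values of $\het_{\cX/Y}$, lift cylindrical approximations of $D$ through the bijection, and apply \autoref{theoremMCVFCylinder} to each resulting pair of cylinders. The difference is organizational. The paper approximates $D$ once, forms $\cC^{(\ast)}_{\varepsilon,n}=\cC\cap\sL(\pi)^{-1}(D^{(\ast)}_\varepsilon)\cap\het_{\cX/Y}^{-1}(n)$ upstairs, and then must extract the bound $\Vert\mu_{\cX,d}(\cC^{(i)}_{\varepsilon,n})\Vert<\varepsilon\exp(n)$ from the identity $\mu_Y(D^{(i)}_\varepsilon)=\sum_n\bL^{-n}\mu_{\cX,d}(\cC^{(i)}_{\varepsilon,n})$; isolating a single summand's norm requires the no-cancellation statement \autoref{propositionNormsAndSumsOfLimits} together with \autoref{remarkMeasureWrittenAsEffectiveLimit}. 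You instead push the height level sets $\cC'_i$ down to a partition $E=\bigsqcup_i E_i$ (which costs an extra invocation of \autoref{propositionImageOfCylinderIsCylinder} to know each $E_i$ is a cylinder, hence that $D\cap E_i$ is measurable and approximable) and approximate each $D\cap E_i$ separately; the payoff is that every comparison of measures becomes the exact scalar identity $\mu_{\cX,d}(\cC_i^{(\ast)})=\bL^{-n_i}\mu_Y(D_i^{(\ast)})$, so the $\varepsilon$-bookkeeping reduces to $\Vert\bL^{-n_i}\alpha\Vert=\exp(-n_i)\Vert\alpha\Vert$ and you avoid \autoref{propositionNormsAndSumsOfLimits} at this step. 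Both routes are valid; your preliminary observation that $\cC\cap\sL(\pi)^{-1}(E)$ is disjoint from $|\sL(\cX\setminus\cU)|$ (so the bijection hypothesis restricts correctly) is also needed, if left implicit, in the paper's argument.
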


\begin{proof}
For all $n \in \Z$, set $\cC_n = \cC \cap \sL(\pi)^{-1}(D) \cap \het_{\cX/Y}^{-1}(n)$. For all $\varepsilon \in \R_{> 0}$, choose a cylindrical $\varepsilon$-approximation $(D_\varepsilon^{(0)}, \{D_\varepsilon^{(i)}\}_{i \in I_\varepsilon})$ of $D$. Since $D \subset E$ and noting e.g., \autoref{lemmaNormInclusionCylinders}, we may replace each $D_\varepsilon^{(0)}$ and $D_\varepsilon^{(i)}$ with their intersections with $E$ and therefore assume that all $D_\varepsilon^{(0)}$ and $D_\varepsilon^{(i)}$ are contained in $E$. For all $\varepsilon \in \R_{> 0}$ and $n \in \Z$, set
\[
	\cC_{\varepsilon, n}^{(0)} = \cC \cap \sL(\pi)^{-1}(D_\varepsilon^{(0)}) \cap \het_{\cX/Y}^{-1}(n),
\]
and for all $i \in I_\varepsilon$, set
\[
	\cC_{\varepsilon, n}^{(i)} = \cC  \cap \sL(\pi)^{-1}(D_\varepsilon^{(i)}) \cap \het_{\cX/Y}^{-1}(n).
\]
We will show that for all $\varepsilon, n$, we have $(\cC_{\varepsilon, n}^{(0)}, \{\cC_{\varepsilon, n}^{(i)}\}_{i \in I_\varepsilon})$ is a $d$-convergent cylindrical $\varepsilon\exp(n)$-approximation of $\cC_n$. It is straightforward to check that
\[
	(\cC_n \setminus \cC^{(0)}_{\varepsilon, n}) \cup (\cC^{(0)}_{\varepsilon, n} \setminus \cC_n) \subset \bigcup_{i \in I_\varepsilon} \cC^{(i)}_{\varepsilon, n}.
\]
Applying \autoref{theoremMCVFCylinder} to $\cC \cap \sL(\pi)^{-1}(E) \to E$ gives that the restriction of $\het_{\cX/Y}$ to $\cC \cap \sL(\pi)^{-1}(E)$ is integer valued and takes only finitely many values. Because $D \subset E$, we also have that the restriction of $\het_{\cX/Y}$ to $\cC \cap \sL(\pi)^{-1}(D)$ is integer valued and takes only finitely many values. 

Let $n_0 \in \Z_{\geq 0}$ be the maximum of the absolute value of $\het_{\cX/Y}$ on $\cC \cap \sL(\pi)^{-1}(E)$. In particular, for all $\varepsilon, i$, we have $\cC_n$, $\cC^{(0)}_{\varepsilon, n}$ and $\cC^{(i)}_{\varepsilon, n}$ are empty whenever $|n| > n_0$. For each $\varepsilon, i$, applying \autoref{theoremMCVFCylinder} to $\cC \cap \sL(\pi)^{-1}(D^{(i)}_\varepsilon) \to D^{(i)}_\varepsilon$ gives that 
\begin{itemize}

\item each $\cC^{(i)}_{\varepsilon, n}$ is a $d$-convergent cylinder in $|\sL(\cX)|$, and

\item we have the equality
\[
	\mu_Y(D^{(i)}_\varepsilon) = \sum_{n = -n_0}^{n_0} \bL^{-n} \mu_{\cX, d}(\cC^{(i)}_{\varepsilon, n}).
\]

\end{itemize}
Then by \autoref{propositionNormsAndSumsOfLimits} and \autoref{remarkMeasureWrittenAsEffectiveLimit},
\[
	\Vert \mu_{\cX, d}(\cC^{(i)}_{\varepsilon, n}) \Vert = \exp(n)\Vert \bL^{-n} \mu_{\cX, d}(\cC^{(i)}_{\varepsilon, n})\Vert \leq \exp(n) \Vert \mu_Y(D^{(i)}_\varepsilon) \Vert < \varepsilon\exp(n).
\]
Similarly, for all $\varepsilon$
\begin{itemize}

\item each $\cC^{(0)}_{\varepsilon, n}$ is a $d$-convergent cylinder in $|\sL(\cX)|$, and

\item we have the equality
\[
	\mu_Y(D^{(0)}_\varepsilon) = \sum_{n = -n_0}^{n_0} \bL^{-n} \mu_{\cX, d}(\cC^{(0)}_{\varepsilon, n}).
\]

\end{itemize}
In particular, for all $\varepsilon, n$ we have $(\cC_{\varepsilon, n}^{(0)}, \{\cC_{\varepsilon, n}^{(i)}\}_{i \in I_\varepsilon})$ is a $d$-convergent cylindrical $\varepsilon\exp(n)$-approximation of $\cC_n$. Thus each $\cC_n$ is $d$-measurable and
\[
	\mu_{\cX,d}(\cC_n) = \lim_{\varepsilon \to 0} \mu_{\cX,d}(\cC^{(0)}_{\varepsilon, n}).
\]
Since $\cC_n$ is empty for all but finitely many $n$, this also implies that $\bL^{-\het_{\cX/Y}}$ is $d$-integrable on $\cC \cap \sL(\pi)^{-1}(D)$. Finally
\begin{align*}
	\int_{\cC \cap \sL(\pi)^{-1}(D)} \bL^{-\het_{\cX/Y}} \diff\mu_{\cX, d} &= \sum_{n = -n_0}^{n_0} \bL^{-n} \mu_{\cX, d}(\cC_n)\\
	&= \sum_{n = -n_0}^{n_0}\bL^{-n} \lim_{\varepsilon \to 0} \mu_{\cX,d}(\cC^{(0)}_{\varepsilon, n})\\
	&= \lim_{\varepsilon \to 0} \sum_{n = -n_0}^{n_0} \bL^{-n} \mu_{\cX, d}(\cC^{(0)}_{\varepsilon, n})\\
	&= \lim_{\varepsilon \to 0} \mu_Y(D^{(0)}_\varepsilon) = \mu_Y(D).
\end{align*}
\end{proof}

We next loosen the restriction that $D$ be contained in a cylinder.

\begin{lemma}
\label{lemmaMCVFSingleMeasurableSet}
Let $D \subset \sL(Y)$ be a measurable set. Then $\bL^{-\het_{\cX/Y}}$ is $d$-integrable on $(\cC \setminus |\sL(\cX \setminus \cU)|) \cap \sL(\pi)^{-1}(D)$ and
\[
	\mu_Y(D) = \int_{(\cC \setminus |\sL(\cX \setminus \cU)|) \cap \sL(\pi)^{-1}(D)} \bL^{-\het_{\cX/Y}} \diff\mu_{\cX, d}.
\]
\end{lemma}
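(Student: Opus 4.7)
The strategy is to stratify $\sL(Y)\setminus\sL(Y\setminus U)$ into countably many cylinders, apply \autoref{lemmaMCVFMeasurableInsideCylinder} on each stratum, and reassemble the results. Let $\sI\subset\cO_Y$ be the ideal sheaf of $Y\setminus U$ and set $E_n:=\ord_\sI^{-1}(n)\subset\sL(Y)$. Each $E_n$ is a cylinder (since $\{\ord_\sI\geq n+1\}=\theta_n^{-1}(\sL_n(Y\setminus U))$ is a cylinder), and $\sL(Y)\setminus\sL(Y\setminus U)=\bigsqcup_{n\geq 0}E_n$. Setting $D_n:=D\cap E_n$, each $D_n$ is a measurable set contained in the cylinder $E_n$, and since $\mu_Y(\sL(Y\setminus U))=0$ we have $\mu_Y(D)=\sum_n\mu_Y(D_n)$ with partial sums converging in $\widehat\sM_k$, hence $\mu_Y(D_n)\to 0$. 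Note also that $\cX\setminus\cU=\pi^{-1}(Y\setminus U)$, so arcs in $|\sL(\cX\setminus\cU)|$ map to $\sL(Y\setminus U)$, and consequently $\cC\cap\sL(\pi)^{-1}(D_n)\subset\cC\setminus|\sL(\cX\setminus\cU)|$.

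Applying \autoref{lemmaMCVFMeasurableInsideCylinder} to $D_n\subset E_n$ yields, for each $n$, that $\het_{\cX/Y}$ takes only finitely many integer values on $\cC\cap\sL(\pi)^{-1}(D_n)$, that $S_{m,n}:=\cC\cap\sL(\pi)^{-1}(D_n)\cap\het_{\cX/Y}^{-1}(m)$ is $d$-measurable, and that
\[
\mu_Y(D_n)\ =\ \sum_{m\in\Z}\bL^{-m}\mu_{\cX,d}(S_{m,n}),
\]
where the sum is finite. Applying $\Vert\cdot\Vert$, the non-archimedean triangle inequality yields the key uniform estimate $\Vert\bL^{-m}\mu_{\cX,d}(S_{m,n})\Vert\leq\Vert\mu_Y(D_n)\Vert$ for all $m$. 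In particular, for each fixed $m$, $\Vert\mu_{\cX,d}(S_{m,n})\Vert\leq\exp(m)\Vert\mu_Y(D_n)\Vert\to 0$ as $n\to\infty$, so by \autoref{propositionConditionForUnionOfMeasurablesIsMeasurable} the set
\[
S_m\ :=\ \bigsqcup_n S_{m,n}\ =\ (\cC\setminus|\sL(\cX\setminus\cU)|)\cap\sL(\pi)^{-1}(D)\cap\het_{\cX/Y}^{-1}(m)
\]
is $d$-measurable with $\mu_{\cX,d}(S_m)=\sum_n\mu_{\cX,d}(S_{m,n})$.

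It remains to verify the $d$-integrability of $\bL^{-\het_{\cX/Y}}$ and compute the integral. The uniform estimate shows $\Vert\bL^{-m}\mu_{\cX,d}(S_{m,n})\Vert\to 0$ as $\max(|m|,n)\to\infty$ (for each $\varepsilon>0$, only finitely many $n$ satisfy $\Vert\mu_Y(D_n)\Vert\geq\varepsilon$, and for each such $n$ only finitely many $m$ contribute), which legitimizes the Fubini-type interchange
\[
\sum_m\bL^{-m}\mu_{\cX,d}(S_m)\ =\ \sum_m\sum_n\bL^{-m}\mu_{\cX,d}(S_{m,n})\ =\ \sum_n\sum_m\bL^{-m}\mu_{\cX,d}(S_{m,n})\ =\ \sum_n\mu_Y(D_n)\ =\ \mu_Y(D).
\]
In particular the left-hand series converges, establishing $d$-integrability and the desired equality. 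The main obstacle is the Fubini step: one must ensure the double series is unconditionally convergent in $\widehat\sM_k$, and this hinges precisely on the non-archimedean estimate $\Vert\bL^{-m}\mu_{\cX,d}(S_{m,n})\Vert\leq\Vert\mu_Y(D_n)\Vert$, which in turn uses the finiteness of the range of $\het_{\cX/Y}$ on each $\cC\cap\sL(\pi)^{-1}(E_n)$ afforded by \autoref{theoremMCVFCylinder}(a).
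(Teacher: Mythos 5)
Your proof is correct and follows essentially the same route as the paper's: stratify $\sL(Y)\setminus\sL(Y\setminus U)$ by contact order with $Y\setminus U$ into cylinders $E_n$, apply \autoref{lemmaMCVFMeasurableInsideCylinder} to each $D\cap E_n$, use \autoref{propositionConditionForUnionOfMeasurablesIsMeasurable} to reassemble each level set of $\het_{\cX/Y}$, and interchange the double sum. One small imprecision: the bare ultrametric inequality only gives $\Vert\sum_m\bL^{-m}\mu_{\cX,d}(S_{m,n})\Vert\leq\max_m\Vert\cdot\Vert$, not the term-wise bound $\Vert\bL^{-m}\mu_{\cX,d}(S_{m,n})\Vert\leq\Vert\mu_Y(D_n)\Vert$; for that you need the equality with the maximum supplied by \autoref{propositionNormsAndSumsOfLimits} together with \autoref{remarkMeasureWrittenAsEffectiveLimit}, which is exactly how the paper justifies the analogous estimate.
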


\begin{proof}
Throughout this proof, choose some closed subscheme structure for $Y \setminus U \hookrightarrow Y$, set
\[
	E_0 = \sL(Y) \setminus \theta_0^{-1}(\sL_0(Y \setminus U)),
\] 
and for all $\ell \in \Z_{\geq 1}$ set 
\[
	E_\ell = \theta_{\ell-1}^{-1}(\sL_{\ell-1}(Y \setminus U)) \setminus \theta_{\ell}^{-1}(\sL_{\ell}(Y\setminus U)) \subset \sL(Y).
\]
Therefore
\[
	\sL(Y) \setminus \sL(Y \setminus U) = \bigsqcup_{\ell \in \Z_{\geq 0}} E_\ell.
\]
Because $\mu_Y(\sL(Y \setminus U)) = 0$, we may replace $D$ with $D \setminus \sL(Y \setminus U)$ and therefore assume that $D$ is disjoint from $\sL(Y \setminus U)$. We then have
\[
	(\cC \setminus |\sL(\cX \setminus \cU)|) \cap \sL(\pi)^{-1}(D) = \cC \cap \sL(\pi)^{-1}(D),
\]
and
\[
	D = \bigsqcup_{\ell \in \Z_{\geq 0}} D \cap E_\ell.
\]
Because each $E_\ell$ is a cylinder and therefore measurable, each $D \cap E_\ell$ is measurable. Then, e.g., by \autoref{propositionConditionForUnionOfMeasurablesIsMeasurable},
\[
	\mu_Y(D) = \sum_{\ell \in \Z_{\geq 0}} \mu_Y(D \cap E_\ell).
\]
Also by \autoref{lemmaMCVFMeasurableInsideCylinder} applied to $D \cap E_\ell \subset E_\ell$,
\begin{itemize}

\item the restriction of $\het_{\cX/Y}$ to $\cC \cap \sL(\pi)^{-1}(D \cap E_\ell)$ is integer valued and takes finitely many values,

\item $\bL^{-\het_{\cX/Y}}$ is $d$-integrable on $\cC \cap \sL(\pi)^{-1}(D \cap E_\ell)$, and

\item we have the equality
\[
	\mu_Y(D \cap E_\ell) = \int_{\cC \cap \sL(\pi)^{-1}(D \cap E_\ell)} \bL^{-\het_{\cX/Y}} \diff\mu_{\cX,d}.
\]

\end{itemize}
For each $\ell \in \Z_{\geq 0}$, let $n_\ell$ be the maximum of the absolute value of $\het_{\cX/Y}$ on $\cC \cap \sL(\pi)^{-1}(D \cap E_\ell)$. Then combining the above,
\begin{align*}
	\mu_Y(D) &= \sum_{\ell \in \Z_{\geq 0}} \mu_Y(D \cap E_\ell)\\
	&= \sum_{\ell \in \Z_{\geq 0}} \int_{\cC \cap \sL(\pi)^{-1}(D \cap E_\ell)} \bL^{-\het_{\cX/Y}} \diff\mu_{\cX,d}\\
	&= \sum_{\ell \in \Z_{\geq 0}} \sum_{n = -n_\ell}^{n_\ell} \bL^{-n} \mu_{\cX, d}(\cC \cap \sL(\pi)^{-1}(D \cap E_\ell) \cap \het_{\cX/Y}^{-1}(-n)).
\end{align*}
The convergence of the final series implies that for any $n \in \Z$,
\[
	\lim_{\ell \to \infty} \mu_{\cX, d}(\cC \cap \sL(\pi)^{-1}(D \cap E_\ell) \cap \het_{\cX/Y}^{-1}(-n)) = 0.
\]
Thus by \autoref{propositionConditionForUnionOfMeasurablesIsMeasurable} for any $n \in \Z$,
\[
	\cC \cap \sL(\pi)^{-1}(D) \cap \het_{\cX/Y}^{-1}(-n)= \bigsqcup_{\ell \in \Z_{\geq 0}}(\cC \cap \sL(\pi)^{-1}(D \cap E_\ell) \cap \het_{\cX/Y}^{-1}(-n))
\]
is $d$-measurable and
\begin{align*}
	\mu_{\cX, d}(\cC \cap \sL(\pi)^{-1}(D) &\cap \het_{\cX/Y}^{-1}(-n)) \\
	&= \sum_{\ell \in \Z_{\geq 0}} \mu_{\cX, d}(\cC \cap \sL(\pi)^{-1}(D \cap E_\ell) \cap \het_{\cX/Y}^{-1}(-n)).
\end{align*}
Therefore, if for each $n\in\Z_{\geq0}$, we set $\mathcal{S}_n=\{\ell\mid |n|\leq n_\ell\}$, we find
\begin{align*}
	\int_{\cC \cap \sL(\pi)^{-1}(D)} &\bL^{-\het_{\cX/Y}}\diff\mu_{\cX, d} \\
	&= \sum_{n \in \Z_{\geq 0}} \sum_{\ell \in \mathcal{S}_n} \bL^{-n} \mu_{\cX, d}(\cC \cap \sL(\pi)^{-1}(D \cap E_\ell) \cap \het_{\cX/Y}^{-1}(-n))\\
	&= \mu_Y(D),
\end{align*}
and we are done.
\end{proof}


Lastly, we prove the following more general form of \autoref{theoremMotivicChangeOfVariablesMeasurable}.

\begin{theorem}\label{theoremMotivicChangeOfVariablesMeasurable-realversion}
Keep Notation \ref{not:mainnot}, and let $\cC \subset |\sL(\cX)|$ be a cylinder such that the map 
\[
\cC \setminus |\sL(\cX \setminus \cU)|\ \longrightarrow\ \sL(Y) \setminus \sL(Y \setminus U)
\]
induced by $\pi$ is a bijection on isomorphism classes of $k'$-points for all field extensions $k'$ of $k$.
Let $f: \sL(Y) \to \Z \cup \{\infty\}$ be a measurable function.

\begin{enumerate}[(a)]

\item\label{theoremMotivicChangeOfVariablesMeasurable-realversion::a} We have $\bL^f$ is integrable on $\sL(Y)$ if and only if $\bL^{f \circ \sL(\pi)-\het_{\cX/Y}}$ is $\dim Y$-integrable on $\cC \setminus |\sL(\cX \setminus \cU)|$.

\item If $\bL^f$ is integrable on $\sL(Y)$, then
\[
	 \int_{\sL(Y)} \bL^{f} \diff\mu_Y = \int_{\cC \setminus |\sL(\cX \setminus \cU)|} \bL^{f \circ \sL(\pi)-\het_{\cX/Y}} \diff\mu_{\cX, \dim Y}.
\]

\end{enumerate}
\end{theorem}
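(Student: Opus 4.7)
The plan is to derive the theorem from \autoref{lemmaMCVFSingleMeasurableSet} by decomposing $f$ into its level sets. Set $A := \cC \setminus |\sL(\cX \setminus \cU)|$, write $d := \dim Y$, and for each $n \in \Z$ set $D_n := f^{-1}(n) \subset \sL(Y)$ and $B_n := A \cap \sL(\pi)^{-1}(D_n)$. Each $D_n$ is measurable, so \autoref{lemmaMCVFSingleMeasurableSet} tells us that $\het_{\cX/Y}$ is integer-valued and takes only finitely many values on $B_n$, that $\bL^{-\het_{\cX/Y}}$ is $d$-integrable on $B_n$, and that
\[
\mu_Y(D_n)\ =\ \int_{B_n} \bL^{-\het_{\cX/Y}} \diff\mu_{\cX,d}\ =\ \sum_{m \in \Z} \bL^{-m}\,\mu_{\cX,d}(A_n^m),
\]
where $A_n^m := B_n \cap \het_{\cX/Y}^{-1}(m)$, and the last sum is in fact finite. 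The fiber $f^{-1}(\infty)$ contributes nothing to either integral by the usual convention that $\bL^\infty=0$, once one checks that the analogous set on the stacky side is a $d$-null set; this follows by applying \autoref{lemmaMCVFSingleMeasurableSet} to $f^{-1}(\infty)$ itself.

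Setting $g := f \circ \sL(\pi) - \het_{\cX/Y}$, the level sets decompose as
\[
g^{-1}(k) \cap A\ =\ \bigsqcup_{n \in \Z} A_n^{\,n-k}
\]
for each $k \in \Z$. Each $A_n^{\,n-k}$ is $d$-measurable by the previous step, and I would invoke \autoref{propositionConditionForUnionOfMeasurablesIsMeasurable} to promote the disjoint union to a $d$-measurable set with countably additive measure, once $\mu_{\cX,d}(A_n^{\,n-k}) \to 0$ as $|n| \to \infty$ is verified. Both sides of the desired formula then unfold as double sums of the \emph{same} terms $\bL^{n-m}\mu_{\cX,d}(A_n^m)$: the left side groups by $n$ via the displayed equation above (after multiplying by $\bL^n$), while the right side groups by $k = n - m$. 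A Fubini-style rearrangement then yields part (b), and a careful norm analysis yields the integrability equivalence in part (a).

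The main obstacle is controlling this double sum in the non-Archimedean topology on $\widehat{\sM}_k$, i.e.\ bootstrapping $d$-integrability on $A$ from integrability on $\sL(Y)$ and vice versa. The key input is \autoref{propositionNormsAndSumsOfLimits}: since for each fixed $n$ only finitely many $m$ contribute to $A_n^m$, that proposition yields
\[
\Vert \bL^n \mu_Y(D_n) \Vert\ =\ \max_{m} \Vert \bL^{n-m}\mu_{\cX,d}(A_n^m) \Vert.
\]
If the LHS is integrable then $\Vert\bL^n\mu_Y(D_n)\Vert \to 0$, so all $\Vert\bL^{n-m}\mu_{\cX,d}(A_n^m)\Vert$ are uniformly small outside a bounded range of $n$; combined with the boundedness of $\het_{\cX/Y}$ on each $B_n$, this forces $\Vert\bL^k\mu_{\cX,d}(g^{-1}(k)\cap A)\Vert \to 0$ as $|k| \to \infty$, yielding $d$-integrability of $\bL^g$. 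Running the argument in reverse, using the symmetric grouping by $k$ together with the non-Archimedean inequality $\Vert\sum_n a_n\Vert\leq \sup_n\Vert a_n\Vert$, gives the converse direction and closes the bidirectional equivalence in (a), while the rearrangement of terms simultaneously delivers the identity in (b).
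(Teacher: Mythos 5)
Your proposal is correct and follows essentially the same route as the paper: decompose $\sL(Y)$ into the level sets $f^{-1}(\ell)$, apply \autoref{lemmaMCVFSingleMeasurableSet} to each, and rearrange the resulting double sum over $(\ell, n)$ by the diagonal $m = \ell - n$, using \autoref{propositionConditionForUnionOfMeasurablesIsMeasurable} (together with the norm control from \autoref{propositionNormsAndSumsOfLimits}) to match convergence of the two groupings. Your extra remarks on $f^{-1}(\infty)$ and the explicit norm bookkeeping are refinements of steps the paper leaves implicit, not a different argument.
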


\begin{proof}
Set $\cC' = \cC \setminus |\sL(\cX \setminus \cU)|$, and for all $\ell \in \Z$, set $D_\ell = f^{-1}(\ell)$. Then by \autoref{lemmaMCVFSingleMeasurableSet}, $\bL^{-\het_{\cX/Y}}$ is $d$-integrable on $\cC' \cap \sL(\pi)^{-1}(D_\ell)$ and
\begin{align*}
	\mu_Y(D_\ell) &= \int_{\cC' \cap \sL(\pi)^{-1}(D_\ell)} \bL^{-\het_{\cX/Y}} \diff\mu_{\cX, d}\\
	&= \sum_{n \in \Z} \bL^{-n} \mu_{\cX, d}(\cC' \cap \sL(\pi)^{-1}(D_\ell) \cap \het_{\cX/Y}^{-1}(-n) )\\
	&= \sum_{n \in \Z} \bL^{-n} \mu_{\cX, d}(\cC' \cap (f \circ \sL(\pi))^{-1}(\ell) \cap \het_{\cX/Y}^{-1}(-n) ).
\end{align*}
Therefore $\bL^f$ is integrable if and only if
\begin{align*}
	\sum_{\ell \in \Z} &\sum_{n \in \Z} \bL^{\ell-n} \mu_{\cX, d}(\cC' \cap (f \circ \sL(\pi))^{-1}(\ell) \cap \het_{\cX/Y}^{-1}(-n) )\\
	&= \sum_{m \in \Z} \bL^{m} \sum_{\ell - n = m} \mu_{\cX, d}(\cC' \cap (f \circ \sL(\pi))^{-1}(\ell) \cap \het_{\cX/Y}^{-1}(-n) )
\end{align*}
converges, which by \autoref{propositionConditionForUnionOfMeasurablesIsMeasurable} is equivalent to $\bL^{f \circ \sL(\pi)-\het_{\cX/Y}}$ being $d$-integrable on $\cC'$, and in that case
\[
	 \int_{\sL(Y)} \bL^{f} \diff\mu_Y = \int_{\cC'} \bL^{f \circ \sL(\pi)-\het_{\cX/Y}} \diff\mu_{\cX, d}.
\]
This completes our proof.
\end{proof}

\bibliographystyle{alpha}
\bibliography{SingularMCVF.bib}

\end{document}